\definecolor{Bnavy}{RGB}{0, 66, 128}
\definecolor{Bdust}{RGB}{140,179,217}
\definecolor{Bsugarpaper}{RGB}{198, 217, 236}
\definecolor{Bgreen}{RGB}{142, 183, 114}
\definecolor{Blimegreen}{RGB}{202, 222, 189}
\definecolor{Bgreentheme}{RGB}{36, 87, 1}
\theoremstyle{plain}
\newtheorem{theorem}{Theorem}[subsection]
\newtheorem{lemma}[theorem]{Lemma}
\newtheorem{corollary}[theorem]{Corollary}
\newtheorem{proposition}[theorem]{Proposition}
\newtheorem{construction}[theorem]{Construction}
\theoremstyle{definition}
\newtheorem{definition}[theorem]{Definition}
\newtheorem{example}[theorem]{Example}
\newtheorem{recollection}[theorem]{Recollection}
\crefname{recollection}{Recollection}{Recollections}
\newtheorem{remark}[theorem]{Remark}
\newtheorem{notation}[theorem]{Notation}
\newtheorem{observation}[theorem]{Observation}
\newcounter{diagram}  
\crefname{diagram}{Diagram}{Diagrams}
\newenvironment{diagram}[1][]{%
    \crefalias{equation}{diagram}
    \begin{equation}%
    \begin{tikzcd}[#1]%
}{%
    \end{tikzcd}%
    \end{equation}%
}
 \newlist{PhiProps}{enumerate}{4}
 \setlist[PhiProps]{label*=(\roman*)}
    \crefname{PhiPropsi}{Observation}{Observations}
    \Crefname{PhiPropsi}{Observation}{Observations}
 \newlist{HProps}{enumerate}{4}
 \setlist[HProps]{label*=(\roman*)}
    \crefname{HPropsi}{Property}{Properties}
    \Crefname{HPropsi}{Property}{Properties}
 \newlist{LinkComp}{enumerate}{4}
 \setlist[LinkComp]{label*=(\roman*)}
    \crefname{LinkCompi}{Description}{Descriptions}
    \Crefname{LinkCompi}{Description}{Descriptions}
   \newlist{LocEx}{enumerate}{4}
 \setlist[LocEx]{label*=(\roman*)}
    \crefname{LocExi}{Item}{Items}
    \Crefname{LocExi}{Item}{Items}
 \newlist{LocExClaim}{enumerate}{4}
 \setlist[LocExClaim]{label*=(\alph*)}
    \crefname{LocExClaimi}{Claim}{Claims}
    \Crefname{LocExClaimi}{Claim}{Claims}
\newlist{AssTranfLem}{enumerate}{4}
 \setlist[AssTranfLem]{label*=(\arabic*)}
    \crefname{AssTranfLem}{Assumption}{Assumptions}
    \Crefname{AssTranfLemi}{Assumption}{Assumptions}
\newlist{FundamentalObs}{enumerate}{4}
 \setlist[FundamentalObs, 1]{label*=\arabic{FundamentalObsi}.}
    \crefname{FundamentalObs}{Observation}{Observations}
    \Crefname{FundamentalObsi}{Observation}{Observations}
    \Crefname{FundamentalObsii}{Observation}{Observations}
\newlist{approaches}{enumerate}{4}
 \setlist[approaches]{label*=Ap.\arabic*}
 \setlist[approaches,2]{label=\alph*)}
    \crefname{approaches}{Approach}{Approaches}
    \Crefname{approachesi}{Approach}{Approaches}
    \Crefname{approachesii}{Approach}{Approaches}
\newlist{motivatingQuestions}{enumerate}{4}
 \setlist[motivatingQuestions]{label*=Q.\arabic*}
    \crefname{motivatingQuestions}{Question}{Questions}
    \Crefname{motivatingQuestionsi}{Question}{Questions}
\newlist{conditionsTriang}{enumerate}{4}
 \setlist[conditionsTriang]{label*=C.\arabic*}
    \crefname{conditionsTriang}{Condition}{Conditions}
    \Crefname{conditionsTriangi}{Condition}{Conditions}
\newlist{versionHoHy}{enumerate}{4}
 \setlist[versionHoHy]{label*=V.\arabic*}
    \crefname{versionHoHy}{Version}{Versions}
    \Crefname{versionHoHyi}{Version}{Versions}   
    \title{Combinatorial models for stratified homotopy theory}
\author{Lukas Waas}
\date{January 20245}
\begin{document}
\maketitle
\begin{abstract}
    This paper is part of a series of three articles with the objective of investigating a stratified version of the homotopy hypothesis in terms of semi-model structures that interact well with classical examples of stratified spaces, such as Whitney stratified spaces. To this end, we prove the existence of several combinatorial simplicial model structures in the combinatorial setting of stratified simplicial sets. One of these we show to be Quillen equivalent to the left Bousfield localization of the Joyal model structure that presents the $(\infty,1)$-category of layered $(\infty,1)$-categories, i.e., such $(\infty,1)$-categories in which every endomorphism is an isomorphism. 
    \end{abstract}

\section{Introduction}\label{sec:comb_mod_for_strat_ho}
Stratified spaces were originally introduced by Whitney, Thom and Mather as a tool to investigate spaces with singularities (see \cite{Whitney,mather1970notes,mather1973strat,thom1969ensembles}).
In the broadest sense, a stratified space consists of the data of a topological space together with a decomposition into disjoint pieces, with additional varying assumptions on the properties of these pieces - the so-called strata - and their interactions.
In more recent years, the investigation of such objects has shifted from being primarily concerned with studying a single object to studying classes of stratified spaces and the stratified maps between them (such maps that map strata into strata). 
Even more, instead of focusing on this 1-categorical perspective, the focus has been on the $(\infty,1)$-categorical point of view: Studying homotopy theories of (certain classes of) stratified spaces, induced by stratified notions of homotopy (see \cite{quinn1988homotopically,HughesPathSpaces,miller2013,LocalStructOnStrat,AFRStratifiedHomotopyHypothesis,douteauEnTop,douteauwaas2021,haine2018homotopy,nand2019simplicial}). Starting with Quinn's theory of homotopically stratified spaces (named homotopically stratified sets in \cite{quinn1988homotopically}), several homotopy theories of stratified spaces were introduced and studied, for example, in \cite{douteauwaas2021,haine2018homotopy,nand2019simplicial}.
This paper is part of a three-part series of articles concerned with these homotopy theories of stratified spaces, the goal of which is to develop (semi-)model structures for stratified homotopy theory which interact well with classical geometric and topological examples of stratified spaces, and ultimately lead to a tractable and interpretable version of the so-called \textit{topological stratified homotopy hypothesis}: 
\begin{displayquote}{The homotopy theory of stratified topological spaces is the same as the homotopy theory of such $(\infty,1)$-categories in which every endomorphism is an isomorphism.} 
\end{displayquote}
It is a general paradigm in homotopy theory (most prominently realized in \cite{Quillen}) that homotopy theoretic phenomena are often easier understood after being translated into the world of combinatorics. Thus, our approach to constructing model structures for stratified homotopy theory consists of developing the theory in a combinatorial framework first and then transferring it to the world of stratified topological spaces.\\
The goal of this paper is to cover the purely combinatorial part of this program. To this end, we survey several model structures for stratified simplicial sets over a fixed poset already on the market, exposing the precise connections between them. 
We then extend these model categories to model categories of stratified simplicial sets with varying poset, and connect the latter to the Joyal model category for $(\infty,1)$-categories.
Let us explicitly state that our goal here is not to obtain results which are new from a purely conceptual ($\infty$-categorical) point of view, but rather to produce an overview of combinatorial models for stratified homotopy theory and mirror several results and structures already known on the $\infty$-categorical level from \cite{Exodromy,haine2018homotopy} in the language of model categories. This has the advantage that it will ultimately allow us to transfer these structures and results to the topological stratified framework, in which the additional structure of a (semi)model category is necessary to connect the homotopy theory with the geometry and topology of stratified spaces. \\
\\
In more detail, the content of this paper is as follows. First, in \cref{subsec:recol_dou_haine}, we recall the Douteau-Henriques model structure (defined by Douteau in \cite{douSimp}, and independently defined by Henriques in \cite{Henriques}), as well as the Joyal-Kan model structure defined by Haine in \cite{haine2018homotopy}, which are both defined on categories of simplicial sets stratified over a fixed poset. The latter of these presents $\infty$-categories with a conservative functor into a poset, so-called \textit{abstract stratified homotopy types}. We show that the Joyal-Kan model structure is the left Bousfield localization of the Douteau-Henriques model structure at the class of inner stratified horn inclusions (\cref{prop:loc}). This provides a useful approach to investigating the categorical homotopy theories of stratified spaces defined by Haine, and the one defined by Nand-Lal in \cite{nand2019simplicial}: One can often obtain results about the categorical theories from results about the Douteau-Henriques theories, which often turn out to be significantly easier to handle, due to the explicit description of weak equivalences in the latter (see, for example, the proof of \cite[Thm 0.1.1]{haine2018homotopy}).
To illustrate this method, in \cref{subsec:decollages}, we provide combinatorial simplicial model structure for the homotopy theory of d\'ecollages described in \cite{haine2018homotopy} - roughly space valued presheaves indexed over the finite increasing sequences over a poset fulfilling a Segal style fibrancy condition -  and prove a Quillen equivalence between Haine's model structure for abstract stratified homotopy types and the model structure for d\'ecollages (\cref{prop:equ_decol_haine}). This Quillen equivalence presents an equivalence of $\infty$-categories already proven in \cite{haine2018homotopy}, without appealing to the theory of complete Segal spaces. Our proof works by constructing a new left Quillen functor model for the functors of homotopy links studied in detail in \cite{douteauwaas2021} (\cref{con:geometric_simplicial_link,theo:geometric_link}). \\
Then, in \cref{subsec:from_local_to_global} we move from the case of a fixed poset to the case of flexible posets by gluing the model structures described in \cite{douSimp,haine2018homotopy} using a method of \cite{CagneMellies}, already employed in \cite{douteauEnTop}. These model structures provide combinatorial simplicial models for the homotopy theories of stratified spaces with varying posets investigated in \cite{douteauEnTop,haine2018homotopy} (see \cref{prop:sStraC_pres_Astrat}).\\
Both of the homotopy theories of stratified simplicial sets constructed in \cref{subsec:from_local_to_global} have the property that morphisms in them are not entirely determined by the underlying map of spaces but include the additional data of a map of posets, in opposition to the classical scenario (see, for example, \cite{weinberger1994topological}) where stratification was purely a property of a map (see the beginning of \cref{subsec:refining_strat_sset} and particularly \cref{rem:so_many_empty_spaces}). If one is looking to get closer to the classical scenario, one can instead work with so-called \textit{refined stratified simplicial sets} (called $0$-connected in the case of abstract stratified homotopy types in \cite{Exodromy}), which are, roughly speaking, the class of stratified simplicial sets for which the underlying poset is entirely encoded in the closure relations of the strata (see \cref{def:refined}).
To account for this, in \cref{subsec:refining_strat_sset}, we provide right Bousfield localizations of the global model structures in which maps between bifibrant objects have the property that maps are defined entirely on the space (simplicial set) level. Our main results in this subsection may be summarized as follows.
The category of stratified simplicial sets $\sStratN$ admits the structures of two combinatorial simplicial model categories, $\sStratDRN$ and $\sStratCRN$ which are respectively right Bousfield localizations of the global versions of the Douteau-Henriques and the Joyal-Kan model structures (\cref{prop:ex_red_struct}). In both model structures, the cofibrant objects are precisely the refined stratified simplicial sets.
$\sStratCRN$ is the left Bousfield localization of $\sStratDRN$ at inner stratified horn inclusions and presents the $\infty$-category of refined ($0$-connected) abstract stratified homotopy types (\cref{prop:c_is_left_bous_of_d_ref,prop:sStraC_pres_Astrat}). We furthermore show that weak equivalences in both $\sStratCRN$ and $\sStratDRN$ are stable under filtered colimits, which is one of the key ingredients to transferring these model structures to the topological realm (\cref{prop:strat_we_stable_colim}).\\
 In the next subsection (\cref{subsec:from_ref_to_ordered_qc}), we then show that one of these model structures is Quillen equivalent to the left Bousfield localization of the Joyal model structure on simplicial sets that presents the $\infty$-category of small $\infty$-categories in which every endomorphism is an isomorphism (\cref{prop:Quillen_Equ_betw_ref_and_ord}).
This lifts a result proven on the $\infty$-categorical level in \cite[2.3.8]{Exodromy} to the level of model categories, and provides one necessary core result for our version of the topological stratified homotopy hypothesis proven in \cite{TSHHWa}. Finally, we prove that all of the model structures on $\sStratN$ defined in this paper are cartesian closed (\cref{thm:cartesian_closure}), which allows us to recover a result of Bruce Hughes (see \cite[Main Result]{HughesPathSpaces}) on homotopically stratified spaces in our purely combinatorial setting (\cref{con:global_exit_paths}).
\subsection{Language and notation}\label{subsec:simp_lang_not}
Let us begin by introducing some of the relevant categories and recalling some notation. We will follow the convention of denoting $1$-categories in bold letters, simplicial categories in bold underlined letters, and $(\infty,1)$-categories (modeled by quasi-categories) by writing their first capital letter in caligraphic script. If we wish to denote the underlying $1$-category of a simplicial category, we do so by simply omitting the line under the name. We use the same notation for model categories mutatis mutandis.
\begin{notation} We are going to use the following terminology and notation for partially ordered sets, drawn partially from \cite{douSimp} and \cite{haine2018homotopy}:
\begin{itemize}
    \item  We denote by $\Pos$ the category of partially ordered sets, with morphisms given by order-preserving maps.
    \item  We denote by $\Delta$ the full subcategory of $\Pos$ given by the finite, nonempty, linearly ordered posets of the form $[n]:=\{0, \cdots, n\}$, for $n \in \mathbb N$.
    \item  Given $P \in \Pos$, we denote by $\catFlag$ the slice category $\Delta_{/ \pos}$. 
    That is, objects are given by arrows $[n] \to \pos$ in $\Pos$, $n \in \mathbb N$, and morphisms are given by commutative triangles.
    \item We denote by $\catRFlag$ the \define{subdivision of } $\pos$, given by the full subcategory of $\catFlag$ of such arrows $[n] \to \pos$, which are injective.
    \item The objects of $\catFlag$ are called \define{flags of} $\pos$. We represent them by strings $[p_0 \leq \cdots \leq p_n]$, of $p_i \in \pos$. We refer to $n$ as the \define{length} of the flag $[p_0 \leq \cdots \leq p_n]$.
    \item Objects of $\catRFlag$ are called regular \define{flags of} $\pos$. We represent them by strings $\standardFlag$, of $p_i \in \pos$.
\end{itemize}
\end{notation}
\begin{notation}
    We use the following terminology and notation for (stratified) simplicial sets, drawn partially from \cite{douSimp} and \cite{haine2018homotopy}:
    \begin{itemize}
    \item We denote by $\sSet$ the simplicial category of simplicial sets, i.e. the category of set valued presheaves on $\Delta^{\op}$, equipped with the canonical simplicial structure induced by the product (see \cite{HigherTopos} for all of the standard notation used for simplicial sets).
    \item When we treat $\sSet$ as a model category this will generally be with respect to the Kan-Quillen model structure (see \cite{Quillen}), unless otherwise noted. When we use Joyal's model structure for quasi-categories (\cite{joyalNotes}) instead, we will denote this model category by $\sSetJoy$. 
    \item We think of $\Pos$ as fully faithfully embedded in $\sSetN$, via the nerve functor (compare \cite{haine2018homotopy}). By abuse of notation, we just write $\pos$ for the simplicial set given by the nerve of $\pos \in \Pos$. 
    \item For $\pos \in \Pos$, we denote by $\sSetPN$ the slice category $\sSetN_{/\pos}$, which is equivalently given by the category of set-valued presheaves on $\catFlag$. We treat $\sSetP$ as a simplicial category, denoted $\sSetP$, with the structure inherited from $\sSet$ (see \cite[Recol. 2.21.]{douteauwaas2021}).
    \item Objects of $\sSetP$ are called \define{$\pos$-stratified simplicial sets}. They are given by a tuple $\str =(\ustr,\sstr \colon \ustr \to \pos)$. In the literature, a $P$-stratified simplicial set $\str =(\ustr,\sstr \colon \ustr \to \pos)$ is often simply referred to by its underlying simplicial set $\ustr$, omitting the so-called \define{stratification} $\sstr \colon \ustr \to \pos$. We are not going to adopt this notation here, as we will frequently consider the same simplicial set with changing stratifications. We are always going to use calligraphic letters for stratified simplicial sets and their non-calligraphic counterparts for the underlying simplicial set.
    \item  Morphisms in $\sSetP$ are called \define{stratum-preserving simplicial maps.} 
    Simplicial homotopies in $\sSetP$ are called \define{stratified simplicial homotopies.}
    Simplicial homotopy equivalences in $\sSetP$ are called \define{stratum-preserving simplicial homotopy equivalences}.
   \item Given a map of posets $f\colon Q \to \pos$ and $\str \in \sSetP$, we denote by $f^*\str \in \sSetP[Q]$ the stratified simplicial set $\ustr \times_{P} \pos[Q] \to Q$. We are mostly concerned with the case where $f$ is given by the inclusion of a singleton $\{p \}$, of a subset $\{q \sim p \mid q \in \pos \}$, for $p \in \pos$ and $\sim$ some relation on the partially ordered set $\pos$ (such as $\leq$), or more generally, a subposet $Q \subset \pos$. We then write $\str_{p}$ (or, respectively, $\str_{\sim p}$, $\str_Q$) instead of $f^*\str$. The simplicial sets $\str_{p}$, for $p \in \pos$ are called the \define{strata of} $\str$. 
   \item For $f\colon Q \to \pos$ in $\Pos$, we denote by $f_!$ the left adjoint to the simplicial functor $f^* \colon \sSetP[P] \to \sSetP[Q]$, given on objects by $(\sstr \colon \ustr  \to Q) \mapsto (f \circ \sstr \colon \ustr \to Q \to \pos)$.
    \item Let $\sSetN^{[1]}$ be the category of arrows of simplicial sets.
    We denote by $\sStratN$ the category of all stratified simplicial sets, given by the full sub-category of $\sSetN^{[1]}$ of such arrows $X \to \pos$, where $X \in \sSetN$ and $\pos\in \Pos$ is (the nerve of) a poset. In particular, every object of $\sStratN$ is given by a $\pos$-stratified simplicial set, for some $\pos\in \Pos$, and a morphism $(X \to \pos) \to (Y \to \pos[Q]$) is given by a pair of morphisms $f\colon X \to Y$ and $g \colon \pos \to \pos[Q]$, where $f$ is a simplicial map and $g$ can be seen as a map of posets, 
    making the obvious square commute (see also \cite[Def. 2.19]{douteauwaas2021}).
    Morphisms in $\sStratN$ are called \define{stratified simplicial maps}. 
    \item Given $\str \in \sStratN$, we are going to use the notational convention $\str = (\utstr, \sstr, \pstr)$ to refer, respectively, to the underlying simplicial set, the stratification and the poset and proceed analogously for morphisms. 
    \item We equip $\sStratN$ with the structure of a simplicial category, tensored and cotensored over $\sSet$, denoted $\sStrat$, with the tensoring induced by setting \[\str \otimes \Delta^n = (\ustr \times \Delta^n  \to \ustr \to \pstr) \spaceperiod\]
    Simplicial homotopies in $\sStrat$ are called \define{stratified simplicial homotopies.}
    Simplicial homotopy equivalences in $\sStrat$ are called \define{stratified simplicial homotopy equivalences}.
    \item The forgetful functor $\sStratN \to \sSetN$, $\str \mapsto \ustr$ will be denoted $\forget$, and has a right adjoint and a left adjoint. The left adjoint is given by left Kan extending the functor on simplices: $\Delta^n \mapsto \{ \Delta^n \xrightarrow{1_{\Delta^n}} \Delta^n = [n] \}$. We denote it by $\lstr \colon \sSetN \to \sStratN$. The right adjoint is given by mapping $K \in \sSetN$ to the trivially stratified simplicial set $\{ K \to [0] \}$. By abuse of notation, we will often write $K$ to refer to the trivially stratified simplicial set associated to a simplicial set $K$.
    
    \end{itemize}
    \end{notation}
    \begin{remark}
        There is a canonical forgetful functor, $\sStratN \to \Pos$ given by $\str \mapsto \pstr$ and we may identify its fiber at $\pos \in \Pos$ with $\sSetPN$. This functor is easily seen to be a Grothendieck bifibration, with right action given by $f \mapsto f^*$ and left action given by $f \mapsto f_!$. It follows that we may use the results in \cite{CagneMellies} to glue local model structures on the fibers to global model structures.
    \end{remark}
     \begin{remark}\label{rem:sstrat_bicomplete}
        Both $\sStratN$ and $\sSetPN$, for $P \in \Pos$, are bicomplete categories (see, for example, \cite{douSimp}). Limits and colimits in $\sSetPN$ are simply given by the limits and colimits in a slice category. Both limits and colimits in $\sStratN$ are computed by taking , respectively, the limit or colimit both on the simplicial set and on the poset level.
    \end{remark}
    \begin{notation}\label{not:flag_operations}
    We are going to need some additional notation for flags and stratified simplices.
    \begin{itemize}
    \item For a flag $\J = [p_0 \leq \cdots \leq p_n] \in \Delta_P$, we write $\Delta^\J$ for the image of $\J$ in $\sSetPN$ under the Yoneda embedding $\catFlag \hookrightarrow \sSetPN$. Equivalently, $\Delta^\J$ is given by the unique simplicial map $\Delta^{n} \to \pos$ mapping $i \mapsto p_i$. $\Delta^\J$ is called the \define{stratified simplex} associated to $\J$.
     \item Given a stratified simplex $\Delta^\J$, for $\J = [p_0 \leq \cdots \leq p_n]$, we write $\partial \Delta^\J$ for its \define{stratified boundary}, given by the composition $\partial \Delta^n \to \Delta^n \to \pos$.
    \item Furthermore, for $0 \leq k \leq n$, we write $\Lambda^\J_k \subset \Delta^\J$ for the stratified subsimplicial set given by the composition $\Lambda^n_k \to \Delta^n \to \pos$ (we use the horn notation as in \cite{HigherTopos}). The stratum-preserving map $\Lambda^\J_k \hookrightarrow \Delta^\I$ is called the \define{stratified horn inclusion associated to } $\J$ and $k$. The inclusion $\Lambda^\J_k \hookrightarrow \Delta^\I$ is called \define{admissible}, if $p_k = p_{k+1}$ or $p_{k} = p_{k-1}$. The inclusion $\Lambda^\J_k \hookrightarrow \Delta^\I$ is called \define{inner} if $0 <k <n$.
    \item Using the fully faithful (and continuous) embedding $\catFlag \hookrightarrow \sSetPN$, we extend the base-change notation for stratified simplicial sets to flags.  That is, for $f \colon Q \to \pos$ we write $f^*\J$ for the unique flag of $Q$ corresponding to $f^*(\Delta^\J)$. We use the same shorthand notation for subsets $Q \subset \pos$. For example, $\J_{\leq p}$ is the flag obtained from $\J$, by removing all entries not less than or equal to $p$.
    \item It will also be convenient to have a concise notation for the images of simplices, horns, and boundaries under $\lstr \colon \sSetN \to \sStratN$. These are denoted by replacing the exponent $n \in \mathbb N$, by the poset $[n]$. That is, we write $ \stratSim := \lstr(\Delta^n)$, $ \stratBound := \lstr(\partial \Delta^n)$, $\stratHorn := \lstr(\Lambda^n_k)$, for $0 \leq k \leq n$. 
\end{itemize}
\end{notation}

\section{Combinatorial models over a fixed poset}
Before we begin with the construction of model structures for the category of stratified simplicial sets over varying posets $\sStratN$, we first cover the case of categories of stratum-preserving maps. Later, in \cref{sec:global_mod_struct}, we will piece together the model structures defined in this section for one fixed poset, to obtain model structures on $\sStratN$.
For the remainder of this subsection, fix some poset $\pos$.
\subsection{ The minimalist- and the Joyal-Kan approach}\label{subsec:recol_dou_haine}
In this subsection, we recall the model structures on $P$-stratified simplicial sets defined in \cite{douSimp} and \cite{haine2018homotopy} and point out the precise relationship between them. 
Since $\sSetPN$ is isomorphic to the category of set-valued presheaves on $\Delta_P$, we may use the methods of \cite{CisinskiPrefaisceaux} to construct model structures on it. 
\begin{recollection}[\cite{douteauwaas2021}]\label{rec:Dou-Hen-mod}
The \define{Douteau-Henriques model structure} on $\sSetP$, defined first in \cite{douSimp}, is the Cisinski model structure (see \cite[Thm. 2.4.19]{HigherCatCisinki}) induced by the simplicial cylinder $X \mapsto X \otimes \Delta^1$, with the empty set of anodyne extensions. This defines a combinatorial, cofibrant, simplicial model structure on $\sSetP$ whose defining classes may be characterized as follows (see \cite{douteauwaas2021}) for this characterization, which is stronger than the one provided in \cite{douSimp}):
\begin{enumerate}
    \item Cofibrations are precisely the monomorphisms in $\sSetP$.
    \item Weak equivalences are precisely such stratum-preserving simplicial maps $\str \to \str[Y]$ for which the induced map of simplicial sets
    \[
    \sSetP(\Delta^\I, \str) \to \sSetP( \Delta^\I, \str[Y])
    \]
    is a weak homotopy equivalence, for all $\I \in \sd(\pos)$. We call such a map a \define{diagrammatic equivalence}.
    \item Fibrations are precisely the simplicial maps which have the right lifting property with respect to all admissible horn inclusions.
\end{enumerate}
We denote the resulting simplicial model category by $\sSetPD$. It carries the minimal model structure (with respect to weak equivalences) in which the cofibrations are the monomorphisms, and stratified simplicial homotopy equivalences are weak equivalences. $\sSetPD$ is cofibrantly generated by the classes of stratified boundary inclusions and admissible horn inclusions.
\end{recollection}
Since $\sSetPD$ is in some sense minimal among model structures on $\sSetP$, it is not surprising that alternative theories arise as a localization of the homotopy theory presented by $\sSetPD$. In particular, this is the case for the model structure defined in \cite{haine2018homotopy}.
\begin{recollection}[\cite{haine2018homotopy}]\label{recol:haine_mod_cat}
    The \define{Joyal-Kan model structure} on $\sSetP$ is the one obtained by localizing the model structure inherited from the Joyal-model structure on $\sSetN$ at the cylinder $- \otimes \Delta^1$. The Joyal-Kan model structure is simplicial, cofibrantly generated, and its defining classes have the following descriptions:
    \begin{enumerate}
        \item Cofibrations are precisely the monomorphisms in $\sSetPN$.
        \item Fibrant objects are precisely the stratified simplicial sets $\str$ for which the underlying simplicial set $\ustr$ is a quasi-category and $\sstr\colon \ustr  \to \pstr$ is a conservative functor. Fibrations between fibrant objects are precisely the stratum-preserving simplicial maps that have the right lifting property with respect to all inner and admissible stratified horn inclusions.
        \item Weak equivalences between fibrant objects are equivalently characterized as the class of 
        \begin{enumerate}
            \item stratified homotopy equivalences;
            \item Joyal equivalences (over $\pos$);
            \item stratum-preserving maps that induce weak equivalences on $\sSetP( \Delta^\I,-)$, for all regular flags $\I$ of length lesser or equal to $1$. 
        \end{enumerate}
    \end{enumerate}
    We denote the model category (uniquely determined by these classes) by $\sSetPC$. Weak equivalences in this model structure will be called \define{Joyal-Kan equivalences}. It follows by the characterization of weak equivalences and fibrant objects above that $\sSetPC$ presents the $\infty$-category of conservative functors from a quasi-category into $\pos$, also called \define{abstract stratified homotopy types} over $\pos$.
\end{recollection}
In view of the minimality of $\sSetPD$, the following is not surprising:
\begin{proposition}\label{prop:loc}
    The simplicial model category $\sSetPC$ is the left Bousfield localization of $\sSetPD$ at the class of stratified inner horn inclusions.
\end{proposition}
\begin{proof}
    It suffices to see that the localization described above has the same fibrant objects as $\sSetPC$. Let $\str$ be fibrant in the localization. In particular, $\str$ has the filler property for all admissible and inner stratified horn inclusions. It follows that $\ustr$ is a quasi-category. Furthermore, as $\ustr \to \pos$ has the right lifting property with respect to every admissible horn inclusion (which includes horn inclusions entirely contained in one stratum) for every $p \in \pos$ the stratum $\str_p$ is a Kan complex. In particular,  $\sstr \colon \ustr \to \pos$ is conservative. Now, conversely, suppose that $\str$ is such that $\sstr \colon \ustr \to \pstr$ is a conservative functor of quasi-categories. Then, since $P$ is the nerve of a $1$-category, $\sstr$ is also an inner fibration, which shows that $\str$ admits fillers for all inner horn inclusions. 
    Now, consider a horn inclusion $\Lambda^\J_k \hookrightarrow \Delta^\J$, with $\J = [p_0 \leq \dots \leq p_n]$, which is not inner, but admissible. We cover the case $k = n$, as the other is analogous. Hence, we may assume that $p_{n-1} = p_n$. Since $\sstr$ is conservative, it follows that the edge of $\Lambda^n_k$ from $n-1$ to $n$ maps to an isomorphism $f$ in $\ustr$. In particular, $f$ is cartesian (\cite[Prop. 2.4.1.5]{HigherTopos}) and a lift with respect to $\Lambda_k^\I \hookrightarrow \Delta^\J$ exists by \cite[Rem. 2.4.1.4]{HigherTopos}.  Therefore, $\str$ admits a filler for all inner and all admissible horn inclusions. The latter shows that it is fibrant in $\sSetPD$. To see that $\str$ is local with respect to inner horn inclusions $\Lambda^\I_k \hookrightarrow \Delta^\I$, we may equivalently show that $\str \to \pos$ has the right lifting property with respect to the maps
    \[
    \Lambda^\I_k \otimes  \Delta^n \cup_{\Lambda^\I_k \otimes \partial \Delta^n} \Delta^\I \otimes \partial \Delta^n \hookrightarrow \Delta^\I \otimes \Delta^n.
    \]
    It is a standard argument that these may be decomposed into a composition of pushouts of inner horn inclusions (see, for example, \cite[Cor 3.2.4]{HigherCatCisinki}).
\end{proof}
Again, using \cite[Cor 3.2.4]{HigherCatCisinki}, we obtain:
\begin{corollary}
    Fibrant objects in $\sSetPC$ are precisely such stratified simplicial sets that have the horn filling property with respect to all admissible and inner stratified horn inclusions.
\end{corollary}
\cref{prop:loc} is particularly useful, because it provides a criterion to check for weak equivalences $\sSetPC$. Generally, in $\sSetPC$, the lack of an explicit criterion for weak equivalences can make such verifications challenging. In many cases, however, we may already verify the relevant property in $\sSetPD$ and then use the fact that they are preserved under left Bousfield localization.\footnote{See for example the approach to proving the existence of semi-models tructures on the topological side we take in \cite{TSHHWa}. Similarly, the proof of a version of a stratified homotopy hypothesis in \cite{haine2018homotopy} was built on \cite{douteauEnTop}.} In this sense, the homotopy theories defined by $\sSetPC$ and $\sSetPD$ are really not in a competing, but in a mutually supportive relationship. 
Let us finish this subsection with a general remark and a proposition which we use to transfer model structures from the simplicial to the topological world in \cite{TSHHWa}.
\begin{remark}\label{rem:hocolim_of_simp}
    Every stratified simplicial set $\str \in \sSetPDN$ (or in $\sSetPCN$) is the homotopy colimit of its stratified simplices. In fact, by \cite[Ex. 8.2.5; Prop. 8.2.9]{CisinskiPrefaisceaux}, this holds for any Cisinski model structure on $\sSetPN$.
\end{remark}
\begin{proposition}\label{prop:we_stable_colim}
    Weak equivalences in $\sSetPD$ and $\sSetPC$ are stable under filtered colimits. 
\end{proposition}
\begin{proof}
    For $\sSetPC$ this is \cite[2.5.9]{haine2018homotopy}. For $\sSetPD$ this follows from the fact that weak equivalences are detected by a finite set of functors with values in simplicial sets, that preserve filtered colimits. That weak equivalences of simplicial sets are stable under filtered colimits follows, for example, as an application of Kan's $\textnormal{Ex}^{\infty}$ functor, which preserves all filtered colimits (see \cite{KanExInf}).
\end{proof}
\subsection{Homotopy links and a model structure for d\'ecollages}\label{subsec:decollages}
    As is apparent from the characterization of weak equivalences in $\sSetPD$ (\cref{rec:Dou-Hen-mod}), the simplicial mapping spaces $\sSetP(\Delta^\I,\str)$, for $\str \in \sSetP$, play a central role in understanding the homotopy theory of $\sSetPD$.
\begin{recollection}\label{recol:equivalence_inj_mod}
     For $\I \in \sd(\pos)$ and $\str \in \sSetP$, the simplicial set $\sSetP(\Delta^\I,\str)$ is called the \define{$\I$-th (simplicial) homotopy link of $\str$}. It is denoted $\HolIPS(\str)$ (see also \cite[Def. 2.31]{douteauwaas2021}). The simplicial sets $\HolIPS(\str)$ are organized in the structure of a simplicial presheaf on $\sd(\pos)$, denoted $\HolIPS[](\str)$. Denote by $\SDiag$ the simplicial category of simplicial presheaves on $\sd P$.
     Homotopy links induce a nerve-style functor 
        \begin{align*}
            \HolIPS[] \colon \sSetP \to \SDiag
        \end{align*}
    that admits a left adjoint, given by mapping $D \in \SDiag$ to the coend $\int^{\I} \Delta^\I \otimes D_{\I}$. 
    This left adjoint functor preserves all monomorphisms. Furthermore, it preserves all weak equivalences in both directions, by \cite[Thm. 1.3]{douteauwaas2021}. Hence, we obtain a pair of (simplicial) Quillen adjoint functors
    \[
     \int^{\I} \Delta^\I \otimes -_{\I}:\DiagSIn \rightleftharpoons  \sSetPD \colon \HolIPS[]
    \]
     between $\sSetPD$ and $\SDiag$ equipped with the injective model structure.
\end{recollection}
As an immediate corollary of \cite[Thm. 1.3]{douteauwaas2021}, one obtains the following. Recall that a functor $F$ between categories with weak equivalences is said to \textit{create weak equivalences}, if it has the property that $F(w)$ is a weak equivalence, if and only if $w$ is a weak equivalence, for every morphism $w$ in the source category.
\begin{corollary}\label{cor:equ_inj_mod_struct}
    The simplicial Quillen adjunction
    \[
     \int^{\I} \Delta^\I \otimes -_{\I}:\DiagSIn \rightleftharpoons  \sSetPD \colon \HolIPS[]
    \]
    is a Quillen equivalence that creates weak equivalences in both directions.
\end{corollary}
\begin{remark}\label{rem:equ_char_of_we}
    Note that the condition for an adjunction between model categories (more generally, categories with weak equivalences) to create weak equivalences in both directions is equivalent to both functors preserving weak equivalences and the unit and counit being given by weak equivalences.
\end{remark}
We may interpret \cref{cor:equ_inj_mod_struct} as follows. If one takes the perspective that inclusions of stratified simplicial sets should be the cofibrations, and that at least the stratified (simplicial) homotopy equivalences should be weak equivalences, then the minimal homotopy theory one ends up with is the one of simplicial presheaves on $\sd(\pos)$. 
Consequently, one would also expect to be able to interpret the homotopy theory of $\sSetPC$ in terms of a category of (certain) presheaves on $\sd(\pos)$. Such a result was first shown in \cite[Thm. 2.7.4]{Exodromy} and in \cite{haine2018homotopy}. Here, we are going to give a version of this result in the language of model categories. This serves to illustrate a method of proof, which we are also going to employ when we show the existence of convenient model structures for topological stratified spaces in \cite{TSHHWa}:\\
Up to weak equivalence, the functor $\HolIPS[]: \sSetP \to \SDiag$ is both a right and a left Quillen adjoint.
       \begin{construction}\label{con:geometric_simplicial_link}
        Homotopy links admit a more geometric model, which is constructed as follows. Let $\I = \standardFlag$ be a regular flag of $P$. We then obtain a functor 
        \[
        \LinkI \colon \Delta_P \to \sSetN
        \] by mapping
        \[
        \J \mapsto \prod_{p_i \in \I} \Delta^{\J_{p_i}}.
        \]
        If $\I_0 \subset \I_1$, then the projections of the product induce a natural transformation 
        \[ \LinkI[\I_1] \to \LinkI[\I_0],\] Under left Kan extension, we therefore obtain a functor 
        \[
        \LinkI[] \colon \sSetPN \to \Diag.
        \]
        \end{construction}
Let us explicitly compute $\LinkI$ for a stratified version of Joyal's join functor.
\begin{construction}\label{con:stratified_join}
    Suppose that $\I \in \sd(\pos)$ is a non-degenerate flag such that $\I = \I_0 \cup \I_1$, with $\I_0$ and $\I_1$ disjoint (and non-empty). 
    Given two flags $\J_0$ an $\J_1$ degenerating from a subflag of $\I_0$ and $\I_1$ respectively, the associated object $\J_0,\J_1 \in \Delta_{\pos}$ admits a coproduct, denoted $\J_0 \sqcup \J_1$. 
    It is given by the (appropriately ordered) union of the sequences defining $\J_0$ and $\J_1$.
    In particular, whenever $\J$ degenerates from a subflag of $\I$ that intersects $\I_0$ and $\I_1$ non-trivially, then $\J = \J_0 \sqcup \J_1$, where $\J_0$ and $\J_1$ denote the respective restrictions of $\J$ to $\I_0$ and $\I_1$.
    Let $\str \in \sSetPN[\I_0]$ and $\str[Y] \in \sSetPN[\I_1]$. We denote by $\str *_P \str[Y] $ the stratified simplicial set given by the presheaf on $\Delta_P$ mapping
    \[
    \J \mapsto \begin{cases}
        \emptyset & \textnormal{, if $\J$ does not degenerate from a subflag of $\I$}\\
        \str({\J}) &\textnormal{, if $\J$ degenerates from a subflag of $\I_0$} \\
        \str[Y]({\J}) &\textnormal{, if $\J$ degenerates from a subflag of $\I_1$} \\
        \str(\J_0) \times  \str[Y](\J_1)& \textnormal{, if $\J = \J_0 \sqcup \J_1$ for $\J_0$ and $\J_1$ as above}
    \end{cases}
    \]
    with all face and degeneracy maps induced by the ones on $\str$ and $\str[Y]$, the functoriality of restriction to $\I_0$ and $\I_1$ and the universal property of the product.
    This construction induces a functor
    \[
    - *_P - \colon \sSetPN[\I_0] \times \sSetPN[\I_1] \to \sSetPN,
    \]
    functorial in morphisms in the obvious way.
    It comes together with a natural transformation
    \[
    \str \sqcup \str[Y] \hookrightarrow \str *_P \str[Y].
    \]
    where we treat $\str \sqcup \str[Y]$ as a stratified simplicial set over $\pos$.
    We call this construction the \define{$P$-stratified join functor.} Indeed, if we restrict to stratified simplices, then there is a canonical natural isomorphism
    \[
    \Delta^{\J_0} *_P \Delta^{\J_1} \cong \Delta^{\J_0 \sqcup \J_1}
    \]
    If we fix any of the two arguments (say the first, which suffices, since the construction is symmetric), then we may use this natural transformation to obtain a lift of the stratified join functor.
    \begin{align*}
      \str[X] *_P - \colon \sSetPN[\I_1] &\to (\sSetPN)_{\str[X]/} \\
      \str[Y] &\mapsto (\str[X] \hookrightarrow \str \sqcup \str[Y] \hookrightarrow \str *_P \str[Y]).
    \end{align*}
    It follows immediately from the definition of 
    $\str[X] *_P -$ and the elementary laws for computing colimits in presheaf and under-categories that $ \str[X] *_P -$ is cocontinuous as a functor with image in $(\sSetPN)_{\str[X]/}$. \\
    \\
    \end{construction}
    Let us now take a look at the interaction of the stratified join functor with the link functors.
    \begin{lemma}
        Using the notation of \cref{con:stratified_join}, there is a natural isomorphism of bifunctors
        \[
        \LinkI (- *_P -) \cong \LinkI[\I_0](-) \times  \LinkI[\I_1](-).
        \]
    \end{lemma}
    \begin{proof}
        We use the notation of \cref{con:stratified_join}.
        Observe that that there is a canonical isomorphism \[
        \LinkI ( \Delta^{\J_0 \sqcup \J_1}) \cong \LinkI[\I_0](\Delta^{\J_0}) \times \LinkI[\I_1](\Delta^{\J_1}) \spaceperiod \] 
        We have already seen that there is an isomorphism
        \[
        \Delta^{\J_0} *_{\pos}\Delta^{\J_0} \cong  \Delta^{\J_0 \sqcup \J_1}
        \]
        natural in $\J_0$ and $\J_1$.
        Hence, after restricting to $\Delta_{\I_0} \times \Delta_{\I_1}$, there is a natural isomorphism of bivariate functors 
        \[
        \LinkI (- *_P -) \cong \LinkI[\I_0] \times  \LinkI[\I_1].
        \]
        We now want to extend this isomorphism to a natural isomorphism of functors on all of $\sSetPN[\I_0] \times \sSetPN[\I_1]$. To see this, via left Kan extension in both arguments, it suffices to show that $\LinkI (- *_P -)$ is cocontinuous in both arguments. Note that $\str[X] *_P -$ is only cocontinuous as a functor into the under-category, hence an additional argument is required.
        To this end, observe that colimits in the under category $(\sSetPN)_{\str[X]/}$ of a diagram of arrows $ i\mapsto (\str \xrightarrow{f_i} \str[Y]_i$) can be computed as the lower horizontal arrow in the following pushout
\begin{diagram}
	{\colim \str[X]} & {\colim \str[Y]_i} \\
	{\str[X]} & {\str[Z]} \spaceperiod
	\arrow["{\colim f_i}", from=1-1, to=1-2]
	\arrow[from=1-1, to=2-1]
	\arrow[from=1-2, to=2-2]
	\arrow[from=2-1, to=2-2]
	\arrow["\lrcorner"{anchor=center, pos=0.125, rotate=180}, draw=none, from=2-2, to=1-1]
\end{diagram}
        In particular, given a colimit of a diagram of stratified $i \mapsto \str[Y]_i \in \sSetPN[\I_1]$ and $\str[X] \in \sSetPN[\I_0]$ there is a pushout square
        \begin{diagram}
	{\colim \str[X]} & {\colim (\str[X] *_P \str[Y]_i}) \\
	{\str[X]} & {\str[X] *_P \colim \str[Y]_i} \spaceperiod
	\arrow["{\colim f_i}", from=1-1, to=1-2]
	\arrow[from=1-1, to=2-1]
	\arrow[from=1-2, to=2-2]
	\arrow[from=2-1, to=2-2]
	\arrow["\lrcorner"{anchor=center, pos=0.125, rotate=180}, draw=none, from=2-2, to=1-1]
\end{diagram}
    in $\sSetPN$.
        If we apply the colimit preserving functor $\LinkI$ to this square, we obtain a pushout square of simplicial sets
        \begin{diagram}
	\colim \LinkI (\str[X]) & \colim\LinkI ( \str[X] *_P \str[Y]_i) \\
	\LinkI ({\str[X]}) & \LinkI ({\str[X] *_P \colim \str[Y]_i}) \spaceperiod
	\arrow["{\colim f_i}", from=1-1, to=1-2]
	\arrow[from=1-1, to=2-1]
	\arrow[from=1-2, to=2-2]
	\arrow[from=2-1, to=2-2]
	\arrow["\lrcorner"{anchor=center, pos=0.125, rotate=180}, draw=none, from=2-2, to=1-1]
\end{diagram}
    Observe that since $\I_1$ is non-empty, it follows that $\LinkI (\str) = \emptyset$. Hence, the left hand vertical in the last pushout square is an isomorphism, showing that the right hand vertical is also an isomorphism. This shows that $\LinkI (- *_{\pos} -)$ preserves colimits in the right argument. The case of the left argument follows by symmetry.
    \end{proof}
\begin{example}\label{ex:Link_of_horn}
    We may use the stratified join to compute the links of stratified horns. Let $\I$ be a regular flag, $\J = [p_0 \leq \dots \leq p_l]$ be some arbitrary flag of $\pos$ and $k \in [l]$. Furthermore, denote by $\overline{\J}$ the unique regular flag from which $\J$ degenerates. Then, the horn inclusion $\Lambda^\J_k \hookrightarrow \Delta^\J$ has the following image under $\LinkI$:
    \begin{enumerate}
        \item If $\I$ is not a subflag of $\overline{\J}$, it is immediate from the definition of $\LinkI$ that \[
        \LinkI \Lambda^\J_k = \emptyset = \LinkI \Delta^\J.\]
        \item If $\I = \overline{\J} \setminus \{p_k\}$ and $\J_{p_k}$ has length $0$, then $\Lambda^\J_k  \cong \partial \Delta^{\J_{\I}} *_P \Delta^{[p_k]}$ and it follows that the image is given by the inclusion \[ \LinkI \Lambda^\J_k = \LinkI (\Lambda^\J_k)_{\I}= \LinkI \partial \Delta^{\J_{\I}} \subset \LinkI \Delta^{\J_{\I}} =\LinkI \Delta^\J. \]
        $\LinkI (\Lambda^\J_k)_{\I}$ is precisely given by the boundary of the polygon $\LinkI \Delta^\J = \prod_{p_i \in \I} \Delta^{\J_{p_i}}$.
        \item If $\I \subsetneq \overline{\J}$, and furthermore $\I \neq \overline{\J} \setminus \{p_k\}$ or the length of $\J_{p_k}$ is not $0$, then we obtain the identity
            \[ \LinkI \Lambda^\J_k = \LinkI \Delta^{\J_{\I}} = \LinkI \Delta^\J.\]
        \item If $ \I = \overline{\J}$, then we may represent $\Lambda^\J_k$ as a join as follows. Denote $\I_0 = \I \setminus \{p_k\}$, $\I_1 = \{ p_k \}$, and by $\J_0$ the restriction of $\J$ to $\I_0$. Let $k_0$ be minimal with the property that $p_{k_0} = p_k$. Denote by $F_k$ the $(k-k_0)$-th face of $\Delta^{\J_{p_k}}$.
        Then 
        \[
        \Lambda^\J_k = ( \Delta^{\J_0} *_P F_k ) \cup_{\partial \Delta^{\J_0} *_P  F_k} ( \partial \Delta^{\J_0} *_P \Delta^{\J_{p_k}}).
        \] Therefore, if we apply $\LinkI$ and use the interaction with stratified joins, we obtain
        \begin{align*}
             \LinkI \Lambda^\J_k &= (\LinkI[\I_0] \Delta^{\J_0} \times F_k) \cup_{\LinkI[\I_0] (\partial \Delta^{\J_0}) \times F_k} (\LinkI[\I_0](\partial \Delta^{\J_0}) \times \Delta^{\J_{p_k}}) \\
             & \subset \LinkI[\I_0]{\Delta^{\J_0}} \times \Delta^{\J_{p_k}} = \LinkI \Delta^\J.
        \end{align*}
    \end{enumerate}
\end{example}
As a consequence of our computations in \cref{ex:Link_of_horn}, we obtain the following corollary, characterizing admissible horn inclusions:
 \begin{corollary}\label{cor:char_of_admissible_incl}
     A stratified horn inclusion $j \colon \Lambda^\J_k \hookrightarrow \Delta^\J$ is admissible if and only if 
     $\LinkI j$ is a weak homotopy equivalence for all regular flags $\I$.
 \end{corollary}
 \begin{proof}
    \cref{ex:Link_of_horn} covers all possible examples of combinations of $\I$ and $\J$. Let $\J$ and $k$ be such that $j$ is admissible. Then, in the first and third cases, the induced map $\LinkI j$ is an isomorphism. The second case cannot occur, as it is assumed that $\J_{p_k}$ has a length greater than or equal to $1$, by the definition of admissibility. Therefore, the only remaining case is the fourth. Note that $F_k \hookrightarrow \Delta^{\J_{p_k}}$ is an acyclic cofibration in the Quillen model structure. Hence, in the fourth case it follows from the description of $\LinkI j \colon \LinkI \Lambda^\J_k \hookrightarrow \LinkI \Delta^\J$ in \cref{ex:Link_of_horn} that $\LinkI j$ is given by the box product of a cofibration and an acyclic cofibration, and hence is also an acyclic cofibration of simplicial sets. Conversely, suppose that $\LinkI j$ is an acyclic cofibration for all $\I$. Then, in particular, $\J_{p_k}$ cannot have length $0$, as this would imply that for $\I = \overline \J \setminus \{p_k\}$ the second case of \cref{ex:Link_of_horn} applies. In this case, $\LinkI$ is given by the boundary inclusion of a polygon, which is not a weak homotopy equivalence.
 \end{proof}
  \begin{proposition}\label{prop:geo_link_is_left_quilen}
            The functor
            \[
            \LinkI[] \colon \sSetPN \to \Diag
            \]
            is the left part of a Quillen adjunction between $\sSetPDN$ and $\DiagSInN$.
        \end{proposition}
        \begin{proof}
            That $\LinkI[]$ admits a right adjoint is immediate from its construction via Kan extension on a category of presheaves. 
            Furthermore, one may easily see that, for any regular flag $\I$, $\LinkI$ sends monomorphisms to pointwise monomorphisms, and hence preserves all cofibrations. A generating set of acyclic cofibrations in $\sSetPDN$ is given by the admissible horn inclusions (\cite[Thm. 2.14]{douSimp}). Therefore, we only need to show that, for any regular flag $\I$ and any admissible horn inclusion $\Lambda^\J_k \hookrightarrow \Delta^\J$, the induced simplicial map 
            \[
            \LinkI{\Lambda^\J_k} \hookrightarrow \LinkI{\Delta^\J}
            \]
            is a weak homotopy equivalence. This is the content of \cref{cor:char_of_admissible_incl}.
        \end{proof}
        Let us now compare $\LinkI$ with the simplicial homotopy link.
        \begin{construction}
        A natural transformation 
        \[
        \LinkI[] \to \HolIPS[]
        \]
        is constructed as follows: Given a flag $\J$ of length $m$ of $P$, which contains a regular flag $\I = \standardFlag$, a $k$-simplex $\tau$ of $\LinkI[\I](\Delta^\J)$ is given by an $n+1$-tuple $(\tau_0, \tau_1 , \cdots , \tau_n)$, with $\tau_i\colon \Delta^k \to \Delta^{\J_{p_i}}$. Under the inclusions $\Delta^{\J_{p_i}} \hookrightarrow \Delta^\J$, we may equivalently interpret these data as a $[m]$ valued matrix $(i_{lj})_{l \in [n], j \in [k]}$, with the properties:
        \begin{itemize}
            \item $q_{i_{lj}} = p_j$ for all $l \in [n], j \in [k]$;
            \item $ i_{lj} \leq i_{l(j+1)}$, for all $l \in [n], j \in [k-1]$.
        \end{itemize}
        As a consequence of the first property, any such matrix also fulfills
        \begin{itemize}
            \item $i_{lj} < i_{{l+1}j}$ for all $l \in [n-1]$, $j \in [k]$.
        \end{itemize}
        Together, the second and the third property imply that
        \begin{itemize}
            \item $i_{lj} \leq i_{l'j'}$, for $l\leq l' \in [n]$ and $j\leq j' \in [k]$.
        \end{itemize}
        Equivalently, such a matrix is precisely the data of a stratum-preserving simplicial map
         \[
        \hat \tau \colon \Delta^\I \times \Delta^{k} \to \Delta^\J
        \]
        given by uniquely extending the map of vertices
        \[
        (l,j) \mapsto (i_{lj}). 
        \]
        One may easily check that this construction is compatible with face and degeneracy maps. Thus, we obtain an induced isomorphism of simplicial sets
        \begin{align*}
            \LinkI(\Delta^\J) &\to \HolIPS(\Delta^\J) \\
            \tau &\mapsto \hat \tau
        \end{align*}
        natural in $\J$ and $\I$ (when $\I$ is not a subflag of $\J$, both simplicial sets are empty by definition). 
        Therefore, again by left Kan extension, we obtain a natural transformation
        \[
        \LinkI[] \to \HolIPS[].
        \]
    \end{construction}
   \begin{proposition}\label{theo:geometric_link}
       The natural transformation $\tau \colon \LinkI[] \to \HolIPS[]$ is given by weak equivalences in $\DiagSInN$.
   \end{proposition}
    We are going to give a purely abstract proof here. Before we do so, let us, however, give a geometrical intuition for why the statement holds. 
    \begin{example}
        Suppose $\pos = \{p < q\}$ is a poset with two strata.
        For a stratified simplex $\Delta^\J$, the image of $\Delta^\J$ under $\LinkI[]$ is the diagram 
        \[
        D= \{ \Delta^{\J}_{p} \leftarrow \Delta^\J_{p} \times \Delta^\J_{q} \to \Delta^\J_{q} \}.
        \]
        If we apply $\int^{\I} \Delta^\I \otimes -_{\I}$ to this diagram, we obtain the quotient of the stratified simplicial set
        \[
        \Delta^\J_{p} \times \Delta^\J_{q} \times \Delta^{[p < q]}
        \]
        obtained by collapsing $\Delta^\J_{p} \times \Delta^\J_{q}$ to $\Delta^\J_{p}$ and $\Delta^\J_{q}$, respectively, at the ends of the interval $\Delta^{[p < q]}$. Note that this construction is just a stratified version of Joyal's alternative join (see, for example, \cite[4.2.1]{HigherCatCisinki}). We obtain a natural comparison map
        \[
        \int^{\I} \Delta^\I \otimes D_{\I} \to \Delta^\J_{p} *_P \Delta^\J_{q} = \Delta^\J.
        \]
        This comparison is natural in $\J$, and we thus obtain a natural transformation
        \[
        \int^{\I} \Delta^\I \otimes \LinkI(-) \to 1_{\sSetPN}.
        \]
        This map is not an isomorphism. However, it is stratified homotopic to a stratified homeomorphism after passing to the topological stratified world. In this sense, $\LinkI[]$ can be thought of as an actual (left) inverse to $\int^{\I} \Delta^\I \otimes -_{\I}$ up to passing from combinatorics to topology. We may just think of this as the statement that a piecewise linear space may be decomposed into a double mapping cylinder along the boundary of some regular neighborhood.
    \end{example}
    \begin{proof}[Proof of \cref{theo:geometric_link}]
        As a consequence of \cref{cor:equ_inj_mod_struct}, $\HolIPS$ preserves homotopy colimits. Since $\LinkI$ is the left part of a Quillen adjunction (with source a cofibrant model category), the same holds for $\LinkI$. 
        As every stratified simplicial set is the homotopy colimit of its stratified simplices (\cref{rem:hocolim_of_simp}), it hence suffices to show that $\tau$ is a weak equivalence on the latter. However, on stratified simplices, $\tau$ is even an isomorphism of simplicial sets.
    \end{proof}
    The fact that, up to weak equivalence, this makes $\HolIPS$ both the left part and the right part of a Quillen equivalence turns out to be quite useful in practice. Let us illustrate this by providing some model structures for d\'ecollages, as defined in \cite{haine2018homotopy}. In particular, this gives an example of how results on abstract stratified homotopy types can be deduced from a deeper understanding of the Douteau-Henriques model structure.
    \begin{recollection}\label{rec:decollage}
        A diagram $D \in \Diag$ is called a \define{d\'ecollage} over $\Pos$, if for every regular flag $\I = \standardFlag$ in $\sd(\pos)$ the induced simplicial map from $D_{\I}$ into the homotopy limit of 
        \[
        D_{p_0} \leftarrow D_{[p_0, p_1]} \rightarrow \cdots \leftarrow D_{[p_{n-1}, p_n]} \rightarrow D_{p_n}
        \]
        is a weak homotopy equivalence. 
        In \cite[Thm. 1.1.7]{haine2018homotopy} the author shows that the homotopy link construction induces an equivalence of $\infty$-categories between abstract stratified homotopy types and d\'ecollages (using a homotopy coherent model of d\'ecollages).
    \end{recollection}
    Let us construct a model structure presenting the $\infty$-category of d\'ecollages. We will need the following observation.
    \begin{observation}\label{obs:mapping_space_from_subobject}
         Observe that, for a subcomplex $K \subset \nerve(\pos)$, the associated simplicial homotopy link diagram $\HolIPS[](K) \in \Diag$ is given by $\emptyset$, at $\I$ with $\Delta^\I \not \subset K$ and by the terminal simplicial set $\Delta^0$ otherwise. Consequently, for any simplicial set $S$, a morphism $\HolIPS[](K) \otimes \Delta^n \to D$ specifies the same data as a morphism from the constant simplicial presheaf on $\sd (K)^{\op
         } \subset \sd(\pos)^{\op}$ with value $\Delta^n$ into $D|_{\sd (K)^{\op}}$.
         It follows that, for $D \in \Diag$, there is a canonical isomorphism
         \[
         \SDiag(\HolIPS[](K),D) \cong \varprojlim_{\I \in \sd (K)^{\op}} D_{\I},
         \]
         where $\sd (K)$ denotes the subcategory of $\sd(\pos)$ given by the simplices of $K$. 
    \end{observation} 
    \begin{notation}
    Given $K \subset \nerve(\pos)$, and $S \in \sSetN$, we denote 
    \[
    K \DiagTen S  := \HolIPS[](K) \otimes S \in \Diag. 
    \]
    This construction defines a functor from the product of the category of subobjects of $\nerve (P)$ with the category $\sSetN$ into $\Diag$. 
\end{notation}
\begin{observation}\label{obs:Diag_ten_gives_limits}
    By \cref{obs:mapping_space_from_subobject} and the simplicial adjunction $\HolIPS[](K) \otimes  - \dashv \SDiag ( \HolIPS[](K), -)$, it follows that morphisms
    \[
    K \DiagTen S \to D
    \]
    are in natural bijection with arrows
    \[
    S \to \varprojlim_{\I \in \sd (K)^{\op}} D_{\I}.
    \]
    In the special case where $K= \Delta^\I$, the category $\sd (K)$ has the terminal object $\Delta^\I$, and we obtain a canonical isomorphism
    \[
    \varprojlim_{\I' \in \sd (K)^{\op}} D_{\I'} = D_{\I}.
    \]
\end{observation}
    \begin{notation}
        Given a flag $\J = [p_0 \leq \dots \leq p_n]\in \Delta_{\pos}$, we denote by $\textnormal{Sp}(\J) \subset \Delta^\J$ the stratified subsimplicial set whose underlying simplicial set is the spine of $\Delta^n$, i.e. the union of all $1$-simplices of the form $\Delta^{\{k,k+1\}}$, for $0 \leq k \leq n-1$.
    \end{notation}
    Observe that the diagrams that one takes a homotopy limit over in \cref{rec:decollage} are precisely the restriction of $D$ to $\sd(\textnormal{Sp}(\J))^{\op}$.
    \begin{construction}
        As $\sSet$ (with the Kan-Quillen model structure) is a left proper, combinatorial, simplicial model category, so is $\DiagSIn$ (\cite[Rem. 2.8.4, A.3.3.2]{HigherTopos}). By \cite[Thm. 4.7]{BarwickLeftRight}, we can therefore localize $\DiagSIn$ with respect to either of the following sets of morphisms:
        \begin{align*}
             \{ \Lambda^\I_k \DiagTen \Delta^0 \hookrightarrow \Delta^\I \DiagTen \Delta^0 &\mid \I \in \sd(\pos), \Lambda^{\I}_k \hookrightarrow \Delta^\I \textnormal{ is inner}\}; \\
        \{ 
        \textnormal{Sp}(\I) \DiagTen \Delta^0 \hookrightarrow \Delta^\I \DiagTen \Delta^0 &\mid \I \in \sd(\pos) 
        \}.
        \end{align*}
        It turns out that these two localizers result in the same left Bousfield localization (see the proof below).
         An injectively fibrant diagram $D$ is then local with respect to these inclusions, if and only if the induced maps
         \[
         D_{\I} \cong \SDiag( \Delta^\I \DiagTen \Delta^0, D) \to \SDiag( \textnormal{Sp}(\I) \DiagTen \Delta^0, D) \cong \varprojlim_{\I' \in \sd (\textnormal{Sp}(\I))^{\op}} D_{\I'},
         \]
         for $\I \in \sd(\pos)$, are weak equivalences.
        The resulting simplicial model category is called the model category of d\'ecollages and denoted $\DiagSDec$. 
    \end{construction}
    Let us show that these two localizers do indeed produce the same localizations:
    \begin{proof}
    We denote the first localizer by $L_0$ and the second by $L_1$.
    It suffices to see that each of the two localizers is contained, respectively, in the set of acyclic cofibrations generated by the other. To this end, observe that within the class of cofibrations, acyclic cofibrations in a model-category are closed under the operations
    \begin{enumerate}
         \item pushouts along monomorphisms\footnote{Of course, they are also closed under more general pushouts, but this will suffice here.};
         \item right cancellation;
         \item composition.
     \end{enumerate}
    Hence, it suffices to see that each element of $L_0$ is generated under these operations by the elements of $L_1$, and vice versa. Next, observe that the functor $K \mapsto K \DiagTen \Delta^0$ (from the category of subobjects of $\nerve(\pos)$) maps such squares that define pushouts in $\sSetPN$ into pushouts.
    Hence, it suffices to see that the class of inner horn inclusions and spine inclusions of subobjects of $\nerve(\pos)$ generate the same class under the three operations 
    \begin{enumerate}
         \item pushouts in $\sSetPN$ along arrows in the category of subobjects of $\nerve(\pos)$;
         \item right cancellation;
         \item composition.
     \end{enumerate}
    Indeed, any spine inclusion $\textnormal{Sp}(\I) \hookrightarrow \Delta^\I$ can be written as a composition of pushouts of inner horn inclusions, along inclusions (see, for example, the proof of \cite[Prop. 1.3.22]{Land}).
     For the converse inclusion, consider the proof of \cite[Lem. 3.5]{JoyalQCvsSS}.
    \end{proof}
    Let us verify that the bifibrant objects of $\DiagSDec$ are indeed precisely such injectively fibrant diagrams that fulfill the d\'ecollage condition.
    \begin{proposition}\label{prop:presents_decollages}
      A bifibrant object $D \in \DiagSIn$ is a d\'ecollage if and only if it is a bifibrant object in $\DiagSDec$. 
    \end{proposition}
    \begin{proof}
        Observe that all objects in $\DiagSIn$ are cofibrant, and thus that bifibrancy is equivalent to fibrancy.
        Under both conditions $D$ is a fibrant object in $\DiagSIn$. By definition, $D$ is fibrant in $\DiagSDec$ if and only if 
       \[
        D_{\I} \to \varprojlim_{\I' \in \sd (\textnormal{Sp}(\I))^{\op}} D_{\I'}
       \]
       is a weak equivalence, for each $\I \in \sd(\pos)$. To show that this is equivalent to being a d\'ecollage, it suffices to show that the right-hand expression computes the homotopy limit of the restriction of $D$ to $\sd(\textnormal{Sp}(\I))^{\op}$.
       Observe that 
       \[
       E \mapsto \varprojlim_{\I' \in \sd (\textnormal{Sp}(\I))^{\op}} E_{\I} = \SDiag( \textnormal{Sp}(\I) \DiagTen \Delta^0, E)\]
       defines a right Quillen functor (since $\DiagSIn$ is a cofibrant simplicial model category). Let us denote this functor by $F$. Equivalently, we may write $F$ as the composition of the right Quillen functor \[
       \varprojlim \colon \FunC ( \sd (\textnormal{Sp})(\I)^{\op} , \sSetN) \to \sSetN\] with the restriction functor along 
       \[
       j \colon \sd (\textnormal{Sp}(\I))^{\op} \to \sd (\pos)^{\op},\] denoted $j^*$. That is, we have $F = \varprojlim \circ j^*$. Observe that $j^*$ is also a right Quillen functor. 
       To see this, we may treat $\sd(\pos)^{\op}$ as a Reedy category, with all morphisms being degree decreasing and apply \cite[Thm 2.7]{barwickReedy}, from which the claim follows. In the following, given a right Quillen functor $G$, we denote by $RG$ its right derived functor.
       As $D$ was assumed to be fibrant, it follows that $\varprojlim_{\I' \in \sd (\textnormal{Sp}(\I))^{\op}} D_{\I'}$ computes the right derived functor of $F$. Hence, we have 
       \[
       \varprojlim_{\I' \in \sd (\textnormal{Sp}(\I))^{\op}} D_{\I'} = R( \varprojlim \circ j^*)(D) = (R \varprojlim) \circ (Rj^*) (D) =( R \varprojlim) j^*D \simeq \ho \varprojlim (j^*D) ,
       \]
       and we have shown that $\varprojlim_{\I' \in \sd (\textnormal{Sp}(\I))^{\op}} D_{\I'}$ computes precisely the homotopy limit in the defining property of a d\'ecollage. 
    \end{proof}
    \begin{theorem}\label{prop:equ_decol_haine}
        The adjunction
        \[
         \intDiag \colon  \SDiag  \rightleftharpoons  \sSetP \colon \HolIPS[]
        \]
        defines a simplicial Quillen equivalence between $\sSetPC$ and $\DiagSDec$, creating weak equivalences in both directions. 
    \end{theorem}
    \begin{proof}
        We are first going to show that $\intDiag$ sends the localizer defining $\DiagSDec$ to weak equivalences in $\sSetPC$. It then follows by the universal property of Bousfield localization that the adjunction in the statement of the theorem is a Quillen adjunction. 
        We then show that $\HolIPS[]$ also preserves all weak equivalences. 
        Since the ordinary unit of adjunction is given by weak equivalences, it follows from $\HolIPS$ preserving weak equivalences that the derived unit is also a weak equivalence. Consequently, the induced Quillen adjunction is a Quillen equivalence, with (ordinary) unit and counit given by weak equivalences (\cref{rem:equ_char_of_we}). To see the statement about $\intDiag$, note that for $K \subset N(P)$ a subcomplex, we have
        \[
        \intDiag [(K \DiagTen \Delta^0)] = K.
        \]
        Hence, the elements of the localizer defining $\DiagSDec$ are mapped to the stratified inner horn inclusions
        \[
        \Lambda^\I_k \hookrightarrow \Delta^\I,
        \]
        which are acyclic cofibrations by definition of the model structure on $\sSetPC$. To show the statement about $\HolIPS$, observe that by \cref{theo:geometric_link} we may equivalently show that $\LinkI[]$ preserves weak equivalences. As every object in $\sSetPC$ is cofibrant, this follows if we can show that $\LinkI[]$ defines a left Quillen functor with respect to the localizations. 
        Again, by the universal property of the left Bousfield localization, it suffices to show that, for $\Lambda^\J_k \to \Delta^\I$ an inner horn inclusion ($\J = [q_0 \leq \cdots \leq q_n]$) that is not also admissible, the induced morphism 
        \[
        \LinkI[](\Lambda^\J_k) \hookrightarrow \LinkI[](\Delta^\J)
        \]
        is a weak equivalence. Let $\overline{\J}$ be the unique non-degenerate flag which $\J$ degenerates from. 
        Denote $\mathcal J_0 := \J \setminus \{p_k\}$ and $\overline{\mathcal \J}_0 = \overline{\J} \setminus \{p_k\} $. Since $\Lambda^\J_k \hookrightarrow \Delta^\J$ is not admissible, we have that $\J_{p_k}$ has length $0$. If we apply \cref{ex:Link_of_horn}, we obtain the following computations of $\LinkI[](\Lambda^\J_k) \hookrightarrow \LinkI[](\Delta^\J)$ at $\I \in \sd(\pos)$:
        \begin{LinkComp}
            \item  \label{proof:LinkCompE} If $\I $ is not a subflag of $\overline{\J}$: \[ \emptyset \hookrightarrow \emptyset ;\] 
            \item \label{proof:LinkComp2} If $\I \subset \overline{\J}$, $\I \neq \overline{\J}$ and $\I \neq \overline{\J}_0$:  \[ \LinkI \Delta^\J \to \LinkI \Delta^\J ;\]
            \item \label{proof:LinkComp3} If $\I = \overline{\J}, \overline{\J}_0$: \[ \LinkI[\overline \J_0] (\partial \Delta^{\J_0}) \hookrightarrow \LinkI[\overline \J_0] (\Delta^{\J_0}).\]
        \end{LinkComp}
        Let us denote the inclusion of \cref{proof:LinkComp3} by $S \hookrightarrow D$.
        Consider the canonical morphisms (adjoint to the identities on $S$ and $D$)
        \begin{align*}
                  \Delta^{\overline{\J}} \DiagTen S \to  \LinkI[] (\Lambda^\J_k) ; \\
                   \Delta^{\overline{\J}} \DiagTen D \to  \LinkI[] (\Delta^\J) \spaceperiod
        \end{align*}
        These morphisms induce a commutative diagram
        \begin{diagram}\label{diag:pushout_with_links}
            \Delta^{\undeg{\J}} \DiagTen S \cup_{\Lambda^{\undeg{\J}}_l \DiagTen S} \Lambda^{\undeg{\J}}_l \DiagTen D  \arrow[d]\arrow[r] & \LinkI[](\Lambda^\J_k) \arrow[d]\\
            \Delta^{\undeg{\J}} \DiagTen D  \arrow[r ]&  \LinkI[](\Delta^\J),
        \end{diagram}
        where $l$ is uniquely determined by $\overline{\J} = [q_0 < \cdots < q_m]$ fulfilling, $q_l = p_k$. We claim that this diagram is pushout. Proving this finishes the proof, since the left vertical is given by a box product of a localizer defining $\DiagSDec$ (namely $\Lambda^{\overline{\J}}_l \DiagTen \Delta^0 \to \Delta^{\overline{\J}} \DiagTen \Delta^0$) with a cofibration of simplicial sets (namely $S \hookrightarrow D$). Let us verify the cocartesianity of this diagram at each $\I \in \sd(\pos)$. If $\I $ is not a subflag of $\J$, then \cref{diag:pushout_with_links} is empty by \cref{proof:LinkCompE}. For $\I \subset \overline{\J}$, $\I \neq \overline{\J}, \overline{\J}_0 $, by \cref{proof:LinkComp2}, both verticals are isomorphisms, which makes the diagram cocartesian. Finally, by \cref{proof:LinkComp3}, if $\I = \overline{\J}, \overline{\J_0}$, then both horizontals are isomorphisms.
        \end{proof}

 \section{Combinatorial models over varying posets}\label{sec:global_mod_struct}
 In this section, we define global analogues of the Douteau-Henriques and the Joyal-Kan model structures described in the previous section. 
\subsection{From local to global model structures}\label{subsec:from_local_to_global}
Now, let us piece the model structures on $\sSetP$, where $\pos$ varies over all posets, together to model structures on $\sStrat$.
To do so, we make use of the following general principle, which is the special case of the characterization of bifibrations over a model category in \cite{CagneMellies}, where the base category carries the trivial model structure (we use the notation of \cite{CagneMellies}). This approach was first used in \cite{douteauEnTop}.
\begin{lemma}[{\cite[Thm. 4.4]{CagneMellies}}]\label{lem:gluing_model_struct}
    Suppose that we are given a Grothendieck bifibration $P \colon   \textnormal {\textbf{M}} \to  \textnormal {\textbf{B}}$. Suppose further that , for every $A \in \textnormal {\textbf{B}}$, 
    the fiber $\textnormal {\textbf{M}}_A$ is equipped with the structure of a model category and that, for every morphism $u: A \to B$ in $\textnormal {\textbf{B}}$, the induced functor $u_! \colon \textnormal {\textbf{M}}_A \to \textnormal {\textbf{M}}_B$ is a left Quillen functor. Then $\textnormal {\textbf{M}}$ carries the structure of a model category with the following defining classes. Let $f: X \to Y$ be a morphism in $\textnormal {\textbf{M}}$: 
    \begin{enumerate}
        \item $f$ is a weak equivalence, if and only if $P(f)$ is an isomorphism and $\baseBack$ is a weak equivalence in $\textnormal {\textbf{M}}_{P(Y)}$ (or equivalently $\baseForw$ is a weak equivalence in $\textnormal {\textbf{M}}_{P(X)}$).
        \item $f$ is a cofibration, if and only if $\baseForw$ is a cofibration in $\textnormal {\textbf{M}}_{P(Y)}$.
        \item $f$ is a fibration, if and only if $\baseBack$ is a fibration in $\textnormal {\textbf{M}}_{P(X)}$.
    \end{enumerate}
    Furthermore, assume that $\textnormal {\textbf{M}}$ is a simplicial category and $P$ a simplicial functor (with respect to the discrete structure on $\textnormal {\textbf{B}}$) such that $u_! \dashv u^*$ is a simplicial adjunction, for all $u \in \textnormal {\textbf{B}}$. If for each $A \in \mathcal B$, the category $\textnormal {\textbf{M}}_A$ is a simplicial model category with respect to the simplicial structure inherited from $\textnormal {\textbf{M}}$, then so is $\textnormal {\textbf{M}}$.
\end{lemma}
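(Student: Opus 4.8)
The first assertion is precisely \cite[Thm.~4.4]{CagneMellies}; the only things to record are that the trivial model structure on $\textnormal{\textbf{B}}$ satisfies the hypotheses of that theorem, and that its general criterion for weak equivalences specializes to (1). For the former, whatever conditions \cite[Thm.~4.4]{CagneMellies} imposes beyond those already assumed here (e.g.\ that $u_!$ be a Quillen equivalence whenever $u$ is a weak equivalence of $\textnormal{\textbf{B}}$) hold automatically, because in the trivial model structure the weak equivalences of $\textnormal{\textbf{B}}$ are the isomorphisms, and for an isomorphism $u$ the functor $u_!$ is an equivalence of categories, hence a Quillen equivalence. For the latter, the general description of the weak equivalences in \cite{CagneMellies} asks that $P(f)$ become invertible in the homotopy category of $\textnormal{\textbf{B}}$ and that an induced comparison be a weak equivalence in a fiber; since $W_{\textnormal{\textbf{B}}}$ consists of the isomorphisms, ``$P(f)$ invertible in $\mathrm{Ho}(\textnormal{\textbf{B}})$'' just says $P(f)$ is an isomorphism, which is (1).

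To read off (1)--(3) one uses that, $P$ being a Grothendieck bifibration, any $f\colon X\to Y$ factors both through the cocartesian lift $X\to (Pf)_!X$ of $Pf$, producing a vertical map $\baseForw\colon (Pf)_!X\to Y$ in $\textnormal{\textbf{M}}_{P(Y)}$, and through the cartesian lift $(Pf)^*Y\to Y$, producing a vertical map $\baseBack\colon X\to (Pf)^*Y$ in $\textnormal{\textbf{M}}_{P(X)}$; these correspond to one another under $(Pf)_!\dashv (Pf)^*$, and when $Pf$ is an isomorphism they are identified via the equivalence $(Pf)_!\simeq ((Pf)^*)^{-1}$, which is the content of the ``or equivalently'' clause in (1). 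With this dictionary, (1)--(3) are simply the translation of the three defining classes supplied by \cite[Thm.~4.4]{CagneMellies}.

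It remains to establish the simplicial refinement. First one checks that $\textnormal{\textbf{M}}$ is tensored and cotensored over $\mathbf{sSet}$: for $X$ in a fiber $\textnormal{\textbf{M}}_A$ and a \emph{connected} simplicial set $S$, the fiberwise tensor $X\otimes_A S\in\textnormal{\textbf{M}}_A$ (which exists, $\textnormal{\textbf{M}}_A$ being a simplicial model category) satisfies $\mathrm{Map}_{\textnormal{\textbf{M}}}(X\otimes_A S,Z)\cong\mathrm{Map}_{\mathbf{sSet}}\big(S,\mathrm{Map}_{\textnormal{\textbf{M}}}(X,Z)\big)$ for every $Z\in\textnormal{\textbf{M}}$ --- this uses that $P$ is simplicial for the discrete structure on $\textnormal{\textbf{B}}$ (so a map out of $X$ decomposes over the possible base morphisms) and that each $u_!\dashv u^*$ is a simplicial adjunction (so $u_!$ preserves the fiberwise tensor), and it lets $X\otimes_A S$ serve as the tensor $X\otimes S$ in $\textnormal{\textbf{M}}$; an arbitrary $S$ is a disjoint union of connected ones and $X\otimes S$ is then assembled from the coproducts of $\textnormal{\textbf{M}}$, which exist as $\textnormal{\textbf{M}}$ is bicomplete. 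Cotensors are dual. The only remaining axiom is the pushout--product axiom, and it suffices to verify it for $j$ a generating (trivial) cofibration of $\mathbf{sSet}$ and $i\colon X\to Y$ an arbitrary (trivial) cofibration of $\textnormal{\textbf{M}}$, say over $u\colon A\to B$. Since $u_!$ is cocontinuous and simplicial, it commutes with tensoring by simplicial sets and hence with forming pushout--products, and a direct computation identifies the fiberwise comparison $\baseForw$ of $i\mathbin{\widehat{\otimes}}j$ with the pushout--product $(\baseForw i)\mathbin{\widehat{\otimes}}j$ formed inside the pertinent fiber (the fiber over the pushout in $\textnormal{\textbf{B}}$ of $P$ applied to the defining square, which is again one of the model categories $\textnormal{\textbf{M}}_C$). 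As $\baseForw i$ is a (trivial) cofibration there, the pushout--product axiom in that fiber together with the characterizations (1)--(3) shows $i\mathbin{\widehat{\otimes}}j$ is a (trivial) cofibration of $\textnormal{\textbf{M}}$.

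The delicate point in the last paragraph --- and the step I expect to be the main obstacle --- is that colimits in $\textnormal{\textbf{M}}$, in particular the tensors and pushout--products just used, are \emph{not} computed fiberwise: one must first transport the diagram forward along the induced morphism of base objects and only then take a colimit in a single fiber. Pinning down the identity $\baseForw(i\mathbin{\widehat{\otimes}}j)=(\baseForw i)\mathbin{\widehat{\otimes}}j$ therefore rests on the (routine but fiddly) compatibilities between cocartesian transport, simplicial tensors, and pushouts; the case of a disconnected $j$, such as the generating cofibration $\partial\Delta^1\hookrightarrow\Delta^1$, is where this requires the most care, which is exactly why I would first reduce the pushout--product axiom to the generators and then argue one generator at a time.
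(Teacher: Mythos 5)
The paper itself offers no proof of this lemma: the first part is an unproved citation of \cite[Thm.~4.4]{CagneMellies}, and the simplicial addendum is stated without argument. Your first two paragraphs are therefore not a comparison to a proof but a (correct) unpacking of why the cited theorem specializes as claimed, and I have no complaints there: the homotopical side conditions of Cagne--Melli\`es do become vacuous over the trivial model structure, and the ``or equivalently'' clause is exactly the cartesian/cocartesian translation you describe.

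The genuine issue is the simplicial part, precisely where you say it is. Two specific problems. First, your identification of the ``pertinent fiber'' is wrong: $\baseForw$ of a morphism $f\colon X\to Y$ lives in $\textnormal{\textbf{M}}_{P(Y)}$, so for $i\mathbin{\widehat{\otimes}}j$ the relevant fiber is over $P(Y\otimes L)$, the base of the \emph{target}, not over the pushout in $\textnormal{\textbf{B}}$ of $P$ applied to the defining square (that pushout is the base of the \emph{source}). Second, and more substantively, the claimed equation $\baseForw(i\mathbin{\widehat{\otimes}}j)=(\baseForw i)\mathbin{\widehat{\otimes}}j$ does not typecheck as written: $\baseForw i$ lives in $\textnormal{\textbf{M}}_B$ (for $u\colon A\to B$), whereas $\baseForw(i\mathbin{\widehat{\otimes}}j)$ lives in $\textnormal{\textbf{M}}_{P(Y\otimes L)}$. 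When $L$ is connected one has $P(Y\otimes L)=B$ and the two fibers coincide; in that case the identity does hold, because the fiberwise tensor agrees with the ambient tensor on connected simplicial sets, cocartesian transport commutes with fiberwise tensors by simpliciality of $u_!\dashv u^*$, and it commutes with colimits as a left adjoint. But for disconnected $L$ (which you correctly flag, e.g.\ $\partial\Delta^1\hookrightarrow\Delta^1$ in the role of $j$ after swapping factors, or $j\colon K\hookrightarrow L$ with $\pi_0(K)\neq\pi_0(L)$) the statement has to be reformulated: one must first collapse $P(Y\otimes L)$ to $B$ (or rather transport everything into the fiber over $P(Y\otimes L)$) and only then compare with a fiberwise pushout--product. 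Since you explicitly declare this step the ``main obstacle'' and do not carry it out, the simplicial conclusion is not actually established by your argument. Concretely: what needs to be shown is that for each generating (trivial) cofibration $j\colon K\to L$, the map $\baseForw(i\mathbin{\widehat{\otimes}}j)$ in $\textnormal{\textbf{M}}_{P(Y\otimes L)}$ is a (trivial) cofibration, and this requires tracking the base objects $P(X\otimes K)$, $P(X\otimes L)$, $P(Y\otimes K)$, $P(Y\otimes L)$ --- which are copowers $A\cdot\pi_0(K)$, etc.\ --- and their pushout explicitly, which you have not done.
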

To apply \cref{lem:gluing_model_struct} to glue the fiberwise model structures on $\sSetP$, we need the following lemma.
\begin{proposition}\label{prop:basechange}
    For any morphism of posets $u \colon \pos \to \pos'$, the induced adjunction \begin{align*}
        u_! \colon \sSetP \rightleftharpoons \sSetP[\pos'] \colon u^*
    \end{align*}
    - given by postcomposition and pulling back along $u$ - is a simplicial Quillen adjunction, with respect to the Douteau-Henriques and the Joyal-Kan model structures (taken the same on both sides, respectively). Furthermore, again in both scenarios, $u_!$ reflects fibrations and creates acyclic fibrations.
\end{proposition}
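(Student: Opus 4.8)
My plan is to reduce everything to the explicit cofibrantly-generated descriptions of the two model structures on $\sSetP$ recalled in the previous section, whose generating cofibrations are the decorated boundary inclusions $\partial\Delta^n_\sigma\hookrightarrow\Delta^n_\sigma$ and whose generating acyclic cofibrations are the decorated horn inclusions $\Lambda^n_{k,\sigma}\hookrightarrow\Delta^n_\sigma$ (the latter, for Joyal-Kan, subject to a side condition on the decoration $\sigma\colon[n]\to\pos$). The key observation is that $u_!$ is very rigid: it is the identity on underlying simplicial sets and underlying maps, and merely post-composes the structure map of an object with the nerve of $u$. Hence it sends $\partial\Delta^n_\sigma\hookrightarrow\Delta^n_\sigma$ to $\partial\Delta^n_{u\sigma}\hookrightarrow\Delta^n_{u\sigma}$ and $\Lambda^n_{k,\sigma}\hookrightarrow\Delta^n_\sigma$ to $\Lambda^n_{k,u\sigma}\hookrightarrow\Delta^n_{u\sigma}$, with any Joyal-Kan side condition on $\sigma$ visibly inherited by $u\sigma$ (it only concerns the ordering of the vertices, which $u$ preserves). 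Being a left adjoint, $u_!$ then preserves cofibrations and acyclic cofibrations in both model structures, so $u_!\dashv u^*$ is a Quillen adjunction. For simpliciality I would note that the tensoring of $\sSetP$ by a simplicial set $K$ sends $X$ to $X\times K$ with structure map $X\times K\to X\to\pos$, which $u_!$ preserves strictly; hence $u_!\dashv u^*$ is a simplicial adjunction, and a simplicial left adjoint between simplicial model categories that is left Quillen is automatically simplicial left Quillen (the enriched pushout-product condition reduces, by simpliciality, to the SM7 axiom of $\sSetP$ together with the previous step).

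For the reflection and creation statements I would transfer lifting problems along $u_!$. Fix $f\colon X\to Y$ in $\sSetP$, with structure maps $\mathrm{str}_X,\mathrm{str}_Y$. Given a lifting square for a generating acyclic cofibration $A\to B$ (resp.\ a generating cofibration) against $f$, apply $u_!$; if $u_!f$ is a fibration (resp.\ acyclic fibration) one obtains a diagonal $\ell\colon B\to X$ which a priori is only a morphism $u_!B\to u_!X$. But $f$ is a morphism over $\pos$, i.e.\ $\mathrm{str}_Y\circ f=\mathrm{str}_X$, and $f\circ\ell$ is the lower edge $B\to Y$ of the square, itself a morphism over $\pos$; hence $\mathrm{str}_X\circ\ell=\mathrm{str}_Y\circ f\circ\ell=\mathrm{str}_B$ already holds, so $\ell$ is a morphism of $\sSetP$ and solves the original square. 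This shows $u_!$ reflects fibrations and also reflects acyclic fibrations.

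It remains to show $u_!$ \emph{preserves} acyclic fibrations. Here I would start from a lifting square $\partial\Delta^n_\tau\hookrightarrow\Delta^n_\tau$ against $u_!f$ (with $\tau\colon[n]\to\pos'$), given by maps $h\colon\partial\Delta^n\to X$ and $g\colon\Delta^n\to Y$ compatible over $\pos'$, and recover the $\pos$-decorations from the data: setting $\tilde\tau:=\mathrm{str}_Y\circ g\colon[n]\to\pos$ one has $u\tilde\tau=\tau$, so $g$ is a morphism $\Delta^n_{\tilde\tau}\to Y$ of $\sSetP$, and, because $f$ lies over $\pos$, $\mathrm{str}_X\circ h=\mathrm{str}_Y\circ f\circ h=\tilde\tau|_{\partial\Delta^n}$, so $h$ is a morphism $\partial\Delta^n_{\tilde\tau}\to X$ of $\sSetP$ and the two decorations match automatically. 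This turns the square into a lifting problem for $\partial\Delta^n_{\tilde\tau}\hookrightarrow\Delta^n_{\tilde\tau}$ against $f$ in $\sSetP$; solving it there (using that $f$ is an acyclic fibration) and applying $u_!$ yields the desired diagonal. Thus $u_!f$ has the right lifting property against all generating cofibrations of $\sSetP[\pos']$, hence is an acyclic fibration, and together with the previous paragraph $u_!$ creates acyclic fibrations.

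I expect the only genuinely non-formal point to be this last preservation argument: one must realize that a lifting problem over $\pos'$ against a decorated boundary inclusion can be re-decorated over $\pos$ simply by reading the structure maps of $X$ and $Y$ off the given maps, and that the resulting decorations on $\partial\Delta^n$ and $\Delta^n$ are then forced to agree precisely because $f$ is a morphism over $\pos$ — which is exactly where the argument uses that we are fibred over posets with the trivial model structure rather than merely manipulating presheaf categories. A minor bookkeeping point along the way is to confirm that the Joyal-Kan side conditions on generating acyclic cofibrations are stable under $u_!$, but this is immediate.
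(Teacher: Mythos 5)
Your argument for the Douteau-Henriques model structure, for simpliciality, and for the reflection/creation of (acyclic) fibrations is essentially the paper's: one observes that $u_!$ is the identity on underlying simplicial sets, so it sends admissible horn and boundary inclusions over $\pos$ to the analogous inclusions over $\pos'$, and one transfers lifting problems across $u_!$ by noting that commutativity of the relevant squares in $\sSet$ already forces the candidate lift to live over $\pos$ (the paper phrases this as "the diagrams can be considered in the slice over $\str[Y]$, which is independent of the stratifications"). Your re-decoration argument for preservation of acyclic fibrations is likewise the paper's observation that every boundary-inclusion lifting square against $u_!f$ already lies in the image of $u_!$.

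However, there is a genuine gap in your treatment of the Joyal-Kan case. You assume that $\sSetPC$ has explicit generating acyclic cofibrations of the form $\Lambda^n_{k,\sigma}\hookrightarrow\Delta^n_\sigma$ "subject to a side condition on the decoration $\sigma$," and that checking stability of this side condition under post-composition with $u$ finishes the argument. This is not available: just as the Joyal model structure on $\sSet$ has no known explicit set of generating acyclic cofibrations given by horn inclusions, the paper explicitly states later ("since we lack an explicit set of acyclic generators for $\sSetPC$, some additional work needs to be done \ldots") that no such set is at hand. Your argument therefore cannot proceed by inspecting a hypothetical generating set. The paper's route around this is to invoke the fact (\cref{prop:loc}) that $\sSetPC$ is a left Bousfield localization of $\sSetPD$ at the stratified inner horn inclusions. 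Since you already know $u_!$ is left Quillen for the Douteau-Henriques structures, the universal property of left Bousfield localization reduces the Joyal-Kan case to checking that $u_!$ carries each stratified inner horn inclusion to a weak equivalence in $\sSetP[\pos']_C$ --- which is immediate, since its image is again a stratified inner horn inclusion and hence belongs to the localizing set. You should replace the generating-set argument for Joyal-Kan with this localization argument.
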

\begin{proof}
    Simpliciality is immediate by definition.
    Clearly, $u_!$ preserves all cofibrations. Furthermore, $u_!$ preserves admissible horn inclusions, which shows the case of the Douteau-Henriques model structure, as the latter generate the acyclic cofibrations. For the case of the Joyal-Kan model structure, by \cref{prop:loc}, it suffices to show that $u_!$ sends stratified inner horn inclusions to acyclic cofibrations. Clearly, the image of every stratified inner horn inclusion under $u_!$ remains an inner horn inclusion. This shows that $u_!$ is left Quillen. Now to see that $u_!$ reflects (acyclic) fibrations, note that for any lifting diagram 
    \begin{diagram}\label{diag:lift_basechange}
        {\str[A]} \arrow[r] \arrow[d] & \str[X] \arrow[d] \\
        \str[B] \arrow[r] & \str[Y]
    \end{diagram}
    any dashed solution to 
    \begin{diagram}
        {u_!\str[A]} \arrow[r] \arrow[d] & u_!\str[X] \arrow[d] \\
        u_!\str[B] \arrow[r] \arrow[ru, dashed]& u_!\str[Y]
    \end{diagram}
    already provides a solution to \cref{diag:lift_basechange}. Indeed, commutativity at the level of simplicial sets already implies that $\str[B] \to \str[X]$ is stratum-preserving, as all these diagrams can be considered in the slice category over $\str[Y]$, which is independent from the stratifications. Hence, the reflection property follows from $u_!$ being left Quillen. That $u_!$ preserves acyclic fibrations follows similarly, if we note that every lifting diagram 
     \begin{diagram}
        {\str[A]} \arrow[r] \arrow[d, hook] & {u_!\str[X]} \arrow[d] \\
        \str[B] \arrow[r] \arrow[ru, dashed]& u_!\str[Y]
    \end{diagram}
    lies in the image of $u_!$ and that $u_!$ creates cofibrations.
\end{proof}
\begin{definition}\label{def:non_ref_mod_struct}
    We denote by $\sStratD$ and $\sStratC$ the simplicial model categories with underlying category $\sStrat$, defined by applying \cref{lem:gluing_model_struct} to the forgetful functor \[\sStrat \to \Pos,\] with the fiberwise model structures given by $\sSetPD$ and $\sSetPC$, for $P \in \Pos$, respectively.
    The model structure on $\sStratD$ is called the \define{Douteau-Henriques model structure} on $\sStrat$. The model structure on $\sStratC$, is called the \define{Joyal-Kan model structure} on $\sStrat$.
    Weak equivalences in these model categories are called \define{poset-preserving diagrammatic equivalences} and \define{poset-preserving Joyal-Kan equivalences}, respectively.
\end{definition}
Let us begin our investigation of these model structures with the following observation:
\begin{lemma}\label{lem:basechange_fib}
    Let $\str \in \sStratN$ and let $f \colon Q \to \pstr, g \colon \pstr \to Q' \in \Pos$. Then the induced natural map $f^*\str \to \str$ is a fibration and the natural map $\str \to  g_!\str$ is a cofibration, in $\sStratCN$ and $\sStratDN$.
\end{lemma}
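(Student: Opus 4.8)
The plan is to read the conclusion off directly from the description of the cofibrations and fibrations of the glued model structures in \cref{lem:gluing_model_struct}. Recall that both $\sStratDN$ and $\sStratCN$ are obtained by gluing the corresponding fiberwise model structures along the bifibration $\sStratN \to \Pos$ via \cref{lem:gluing_model_struct}; concretely, a morphism $h \colon X \to Y$ covering $u = P(h)$ factors canonically as a cocartesian arrow $X \to u_!X$ followed by a map $\baseForw \colon u_!X \to Y$ lying over the identity of $P(Y)$, and dually as a map $\baseBack \colon X \to u^*Y$ lying over the identity of $P(X)$ followed by a cartesian arrow $u^*Y \to Y$; then $h$ is a cofibration iff $\baseForw$ is a cofibration in the fiber over $P(Y)$, and a fibration iff $\baseBack$ is a fibration in the fiber over $P(X)$. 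Since this characterisation refers only to the bifibration $\sStratN \to \Pos$ together with the fiberwise model structures, every step below applies without change to both $\sStratDN$ and $\sStratCN$.

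I would then apply this twice. First, to $h \colon f^*\str \to \str$, which covers $f \colon Q \to \pstr$: by \cref{prop:basechange} the functor $f^*$ computes the base-change along $f$, so $h$ is precisely the cartesian lift of $f$ at $\str$, and hence the comparison map $\baseBack$ through which $h$ factors is the identity of $f^*\str$ in the fiber over $Q$. As identities are fibrations in every model category, $h$ is a fibration in $\sStratDN$ and in $\sStratCN$. Dually, applied to $h \colon \str \to g_!\str$, which covers $g \colon \pstr \to Q'$: again by \cref{prop:basechange}, $g_!$ (postcomposition with $g$) computes the base-change along $g$, so $h$ is the cocartesian lift of $g$ at $\str$, the comparison map $\baseForw$ is the identity of $g_!\str$ in the fiber over $Q'$, and, as identities are cofibrations, $h$ is a cofibration in both model structures.

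I do not expect a genuine obstacle. The only point requiring care is the identification of the two maps appearing in the statement with the (co)cartesian arrows of the bifibration $\sStratN \to \Pos$, so that the comparison maps are \emph{literally} identities rather than merely isomorphisms; but this is immediate from the description of $u^*$ as pullback and of $u_!$ as postcomposition in \cref{prop:basechange}. Beyond that the statement is a pure unwinding of the gluing construction, and it holds equally for the model structure glued from any family of fiberwise model structures satisfying the hypotheses of \cref{lem:gluing_model_struct}.
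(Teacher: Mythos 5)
Your proposal is correct and matches the paper's own (very short) proof: both arguments observe that $f^*\str \to \str$ and $\str \to g_!\str$ are, respectively, the cartesian and cocartesian lifts, so the comparison maps $\baseBack$ and $\baseForw$ in \cref{lem:gluing_model_struct} are identities, which are trivially fibrations and cofibrations in the fibers.
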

\begin{proof}
    This is immediate from the simple observation that $\baseBack[f]$ and $\baseForw[g]$ are both given by isomorphisms (the identity even).
\end{proof}
\begin{proposition}\label{prop:strat_we_stable_colim}
    Weak equivalences in $\sStratC$ and $\sStratD$ are stable under filtered colimits. 
\end{proposition}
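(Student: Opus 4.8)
The plan is to reduce the claim to the corresponding stability statement for the fiberwise model structures on $\sSetP$, using the way colimits are computed in a Grothendieck bifibration. Fix a filtered category $I$ and a natural transformation $f\colon X\Rightarrow Y$ of functors $X,Y\colon I\to\sStrat$ all of whose components $f_i$ are weak equivalences; I must show that $\operatorname{colim}_I f$ is a weak equivalence in $\sStratC$, and the same argument will work for $\sStratD$. Recall that the forgetful functor $P\colon\sStrat\to\Pos$ is the Grothendieck bifibration to which \cref{lem:gluing_model_struct} was applied; its fibers $\sSetP$ are cocomplete, and by \cref{prop:basechange} each reindexing functor $u_!$ is a left adjoint, hence cocontinuous. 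Consequently filtered colimits in $\sStrat$ are computed by the standard formula for colimits in the total category of a Grothendieck opfibration with cocomplete fibers and cocontinuous reindexing: $P$ preserves them, and writing $\pos:=\operatorname{colim}_I P X_i$ with coprojections $c^X_i\colon P X_i\to\pos$, one has $\operatorname{colim}_I X\cong\operatorname{colim}_I(c^X_i)_!X_i$, the latter colimit taken in $\sSetP$; likewise for $Y$, with $\pos':=\operatorname{colim}_I P Y_i$ and coprojections $c^Y_i\colon P Y_i\to\pos'$. In particular $P(\operatorname{colim}_I f)=\operatorname{colim}_I P(f_i)$ is the colimit of a natural isomorphism (each $P(f_i)$ being an isomorphism because $f_i$ is a weak equivalence), hence an isomorphism $u\colon\pos\xrightarrow{\ \sim\ }\pos'$, so by the characterization of weak equivalences in \cref{lem:gluing_model_struct} it remains only to check that $\baseForw[\operatorname{colim}_I f]$ is a weak equivalence in $\sSetP[\pos']$.

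The next step is to identify $\baseForw[\operatorname{colim}_I f]$ as a filtered colimit of the comparison maps $\baseForw[f_i]$. Since the universal property of $u$ forces $c^Y_i\circ P f_i=u\circ c^X_i$ for all $i$, applying the cocontinuous functor $u_!$ to the formula for $\operatorname{colim}_I X$ and using pseudofunctoriality of $(-)_!$ yields $u_!\operatorname{colim}_I X\cong\operatorname{colim}_I(c^Y_i)_!(P f_i)_!X_i$, while $\operatorname{colim}_I Y\cong\operatorname{colim}_I(c^Y_i)_!Y_i$ in $\sSetP[\pos']$. A diagram chase over the coprojections --- using the naturality of the comparison maps with respect to $f_i$ and to the coprojections $X_i\to\operatorname{colim}_I X$, $Y_i\to\operatorname{colim}_I Y$ in $\sStrat$ --- then identifies $\baseForw[\operatorname{colim}_I f]$ with $\operatorname{colim}_I\bigl((c^Y_i)_!\baseForw[f_i]\bigr)$, a filtered colimit of the maps $(c^Y_i)_!\baseForw[f_i]\colon(c^Y_i)_!(P f_i)_!X_i\to(c^Y_i)_!Y_i$.

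Finally, for each $i$ the comparison map $\baseForw[f_i]$ is a weak equivalence in $\sSetP[P Y_i]$, again by \cref{lem:gluing_model_struct}, since $f_i$ is a weak equivalence in $\sStrat$. Since $(c^Y_i)_!$ is left Quillen by \cref{prop:basechange} and a left Quillen functor preserves weak equivalences between cofibrant objects, while every object of $\sSetP[P Y_i]$ is cofibrant in both fiberwise model structures, it follows that $(c^Y_i)_!\baseForw[f_i]$ is a weak equivalence in $\sSetP[\pos']$. Hence $\baseForw[\operatorname{colim}_I f]$ is a filtered colimit of weak equivalences in the Joyal--Kan, respectively Douteau--Henriques, model structure on $\sSetP[\pos']$, and invoking the corresponding local stability statement from the previous section shows that it, and therefore $\operatorname{colim}_I f$, is a weak equivalence. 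The main obstacle is exactly this fiberwise input --- stability of the Joyal--Kan and Douteau--Henriques weak equivalences on $\sSetP$ under filtered colimits --- which is not automatic for a combinatorial model category and must be supplied from the local theory; the remaining work is routine bookkeeping, namely making the total-category colimit formula and the naturality diagram chase of the second paragraph precise.
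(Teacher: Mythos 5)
Your proof is correct and follows the same basic route as the paper — reduce to a single fiber $\sSetP$ and then invoke the local stability statement \cref{prop:we_stable_colim} — but you execute the reduction with considerably more care. The paper's own argument is terse: it asserts that because each $P(f_i)$ is an isomorphism and ``filtered diagrams lack monodromy,'' the colimit of all posets involved is canonically isomorphic to any poset appearing in the diagram. Taken literally this overstates the situation, since in a natural transformation $f\colon X\Rightarrow Y$ of $I$-diagrams only the comparison maps $P(f_i)$ need be isomorphisms, not the transition maps $PX_i\to PX_j$ within either diagram; so the colimit poset need not agree with any $PX_i$. What one actually gets, and what you establish, is that $\operatorname{colim}PX\to\operatorname{colim}PY$ is an isomorphism. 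Your subsequent use of the colimit formula for a Grothendieck bifibration with cocomplete fibers and cocontinuous reindexing, writing $\operatorname{colim}_I X\cong\operatorname{colim}_I(c^X_i)_!X_i$ in $\sSetP$ and identifying $\baseForw[\operatorname{colim}_I f]$ with a filtered colimit of the pushed-forward comparison maps $(c^Y_i)_!\baseForw[f_i]$ (each a weak equivalence by Ken Brown's lemma, \cref{prop:basechange}, and cofibrancy of every object in $\sSetP$), is exactly the bookkeeping the paper elides. So your proposal is a more rigorous elaboration of the same idea rather than a different approach, and it fills in a step that the paper passes over too quickly.
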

\begin{proof}
    Note that as every weak equivalence is given on posets by an isomorphism, and filtered diagrams lack monodromy, it follows that the colimit of all posets involved is canonically isomorphic to any of the posets in the filtered diagram, and we may easily reduce the statement to such diagrams of weak equivalences, which are given by the identity on the poset level. Now, the result follows from \cref{prop:we_stable_colim}.
\end{proof}
\begin{remark}\label{rem:ac_fib_sStrat}
Note that the acyclic fibrations in $\sStratD$ and $\sStratC$ are precisely the stratified maps that induce an isomorphism on posets and an acyclic fibration (in the Joyal or Kan model structure) on simplicial sets. Indeed, this follows by applying \cref{prop:basechange} to $u \colon \pos \to [0]$.
\end{remark}
We may then state the following global version of \cite[Cor. 2.5.11]{haine2018homotopy}. 
\begin{recollection}
     Recall by \cite{HigherTopos} that the quasi-category of all (small) quasi-categories $\iCat$ is given by the homotopy coherent nerve of the simplicial category $\siCat$, whose objects are small quasi-categories $X$, and whose mapping spaces are given by the Kan complexes $\sSetL(X,Y)^{\simeq}$, given by the maximal Kan complex in $\sSetL(X,Y)$. The infinity category of \define{abstract stratified homotopy types} (see \cite{haine2018homotopy}), denoted $\AbStr$, is the full subcategory of the arrow quasi-category $ \iCat^{\Delta^{1}}$ of conservative functors $F \colon X \to \pos$, where $\pos$ is a poset. (The tilde over $\sSet$ indicates that, in order to avoid set-theoretic issues, $\sSetL$ is modeled on a larger Grothendieck universe than $\sSet$.)
\end{recollection}
\begin{proposition}\label{prop:sStraC_pres_Astrat}
    $\sStratC$ presents the $\infty$-category of abstract stratified homotopy types. 
\end{proposition}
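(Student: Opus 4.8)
The plan is to obtain \cref{prop:sStraC_pres_Astrat} from its local counterpart \cite[Cor.~2.5.11]{haine2018homotopy} by feeding the latter through the bifibration construction of \cref{lem:gluing_model_struct}. The key general input is the $\infty$-categorical shadow of that construction: whenever a model category $\mathbf{M}$ is glued from a left Quillen presheaf $A \mapsto \mathbf{M}_A$ along $P \colon \mathbf{M} \to \mathbf{B}$ with $\mathbf{B}$ carrying the trivial model structure, its Dwyer--Kan localization $\mathbf{M}[W^{-1}]$ is the cocartesian unstraightening over $N\mathbf{B}$ of the functor $A \mapsto \mathbf{M}_A[W_A^{-1}]$, $(u\colon A \to B) \mapsto \mathbb{L}u_!$; in particular $\mathbf{M}[W^{-1}] \to N\mathbf{B}$ is a cocartesian fibration. (Here $N\mathbf{B}$ is the localization of $\mathbf{B}$ at its isomorphisms, which for a $1$-category changes nothing.) I would either locate this in the literature on Dwyer--Kan localizations of Grothendieck constructions of left Quillen presheaves, or prove it directly — that $\mathbf{M}[W^{-1}] \to N\mathbf{B}$ is cocartesian with cocartesian edges represented by the cofibrations $X \to u_!X$ of \cref{lem:basechange_fib}, and with the asserted straightening, using \cref{prop:basechange}.

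Applied to $P \colon \sStrat \to \Pos$ and combined with \cite[Cor.~2.5.11]{haine2018homotopy}, which identifies $\sSetPC$, localized at its weak equivalences, with $\AbStr_\pos$, the $\infty$-category of conservative functors from a small $\infty$-category to $N\pos$ (equivalently the fibre of $\AbStr \to N\Pos$ over $\pos$), this shows that the underlying $\infty$-category of $\sStratC$ is the cocartesian fibration over $N\Pos$ classified by $\pos \mapsto \AbStr_\pos$ with transition functors $\mathbb{L}u_!$. It remains to compute the $\mathbb{L}u_!$. The derived functor $\mathbb{L}u_!$ is postcomposition with $N u$ followed by a fibrant replacement in the Joyal--Kan model structure over $\pos'$; since by \cite[Cor.~2.5.11]{haine2018homotopy} (together with \cref{prop:loc}) the fibrant objects of that model structure are exactly the conservative functors to $N\pos'$, this fibrant replacement is precisely the \emph{conservative reflection} — the reflection onto the reflective subcategory $\AbStr_{\pos'} \subseteq \iCat_{/N\pos'}$, realized by inverting the edges lying over degeneracies. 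Hence $\mathbb{L}u_!$ is the conservative reflection of postcomposition with $u$.

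On the other side, $\AbStr \to N\Pos$ is itself a cocartesian fibration classified by exactly this data. Indeed, the target evaluation $\mathrm{ev}_1 \colon \iCat^{\Delta^1} \to \iCat$ is a cocartesian fibration with fibres the slices $\iCat_{/X}$ and cocartesian transition functors given by postcomposition; restricting the base along the fully faithful inclusion $N\Pos \hookrightarrow \iCat$ yields a cocartesian fibration $\iCat^{\Delta^1} \times_{\iCat} N\Pos \to N\Pos$ with fibres $\iCat_{/N\pos}$, and $\AbStr$ is the full subcategory spanned fibrewise by the reflective subcategories $\AbStr_\pos \subseteq \iCat_{/N\pos}$. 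By the standard fact that such a fibrewise-reflective full subcategory is again a cocartesian fibration — with transition functors the reflections of the ambient ones — $\AbStr \to N\Pos$ is classified by $\pos \mapsto \AbStr_\pos$ with transitions the conservative reflections of postcomposition, i.e.\ by the same functor $N\Pos \to \mathrm{Cat}_\infty$ as the underlying $\infty$-category of $\sStratC$. Two cocartesian fibrations with the same classifying functor are equivalent over the base, so the underlying $\infty$-category of $\sStratC$ is equivalent to $\AbStr$; since $N\Pos \hookrightarrow \iCat$ is fully faithful, this is the same as the full subcategory of $\iCat^{\Delta^1}$ on conservative functors to posets, which is $\AbStr$ by definition.

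The main obstacle is the general bifibration principle invoked in the first step: it is what makes the local-to-global passage formal, and either an adequate reference or a careful direct argument (via \cref{lem:basechange_fib} for the cocartesian edges and \cref{prop:basechange} for the derived functoriality) is needed; the matching of transition functors and the ``fibrewise-reflective subfibration'' lemma are comparatively routine. Two further points require a little care: the set-theoretic bookkeeping flagged by the tilde notation — the localization at the stratified inner horn inclusions over all posets must be organized within the locally presentable setting, as for the fibrewise structures — and the coherence of the equivalence $\mathbb{L}u_! \simeq (\text{conservative reflection of postcomposition})$ as a natural transformation of functors $N\Pos \to \mathrm{Cat}_\infty$, rather than objectwise. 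Alternatively, one can repackage the argument: glue the fibrewise model structures presenting the slices $\iCat_{/N\pos}$ into a model category on $\sStrat$ (presenting $\iCat^{\Delta^1} \times_{\iCat} N\Pos$ by the same principle), observe that $\sStratC$ is a left Bousfield localization of it along the stratified inner horn inclusions of \cref{prop:loc}, and conclude from the identification of the new local objects as the conservative functors — which avoids the explicit transition-functor comparison. Either way, everything stratified is confined to \cref{prop:loc} and \cite[Cor.~2.5.11]{haine2018homotopy}; the rest is the formal transport over $\Pos$.
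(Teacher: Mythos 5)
Your route is genuinely different from the paper's. The paper never invokes straightening/unstraightening: it works at the level of simplicial categories of bifibrant objects, observes that the simplicial category $\sStrat^{o}$ is literally \emph{isomorphic} to the full simplicial subcategory of $\siCat^{[1]}$ spanned by conservative functors into posets (by an explicit computation of the mapping Kan complexes, using that $P^{\simeq}$ is discrete), and then feeds this into a zig-zag of DK-equivalences $\iCat^{\Delta^1} \hookleftarrow \mathcal{I}\cat[sofib] \to \nerve((\siCat^{[1]})^{o})$ together with the observation that conservative functors to posets are automatically isofibrations. Everything happens in one fiber at a time only implicitly; there is no appeal to a ``local-to-global'' $\infty$-categorical bifibration principle. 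Your proof, by contrast, pushes all the stratified content into \cref{prop:loc} and the local result \cite[Cor.~2.5.11]{haine2018homotopy}, and handles the globalization by identifying both $\sStratC[W^{-1}]$ and $\AbStr \to N\Pos$ as cocartesian unstraightenings of the same functor $\pos \mapsto \AbStr_{\pos}$. That is more modular and more conceptual, but it imports machinery the paper's argument sidesteps.

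The two gaps you flag are real and not merely cosmetic. (1) The ``bifibration principle'' — that the DK-localization of a Cagne–Melliès/Harpaz–Prasma integral model structure over a trivial base is the cocartesian unstraightening of the fiberwise DK-localizations — does appear in the literature on integral/Grothendieck model structures, but it is a theorem, not a formality; in particular you must show the marked edges $\str \to u_!\str$ from \cref{lem:basechange_fib} remain cocartesian after localization, which uses \cref{prop:basechange} (that $u_!$ is left Quillen, and that $u_!$ creates acyclic fibrations) but still requires an argument. (2) The more serious issue is the one you mention second: matching $\mathbb{L}u_!$ with ``conservative reflection of postcomposition'' objectwise (and even on $1$-morphisms) does not by itself identify the two classifying functors $N\Pos \to \mathrm{Cat}_\infty$; you need a coherent natural equivalence, and producing one is precisely the kind of higher-coherence chore that the paper's simplicial-category computation is designed to avoid. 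Your proposed repackaging at the end — glue the unlocalized Joyal slice structures into a model category presenting $\iCat^{\Delta^1} \times_{\iCat} N\Pos$, then recognize $\sStratC$ as its left Bousfield localization at the stratified inner horns and identify local objects with conservative functors — genuinely eliminates the transition-functor comparison and is the cleaner way to run this strategy; but it still rests on the bifibration principle for the ambient fibration, so it does not remove the first dependency. Net assessment: correct in outline, structurally sound, but it trades the paper's elementary and self-contained hands-on identification for two pieces of external $\infty$-categorical input that would each need a reference or a proof to be written out in full.
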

\begin{proof}
    A stratified simplicial set $\str \in \sStratCN$ is fibrant, if and only if it is fibrant as an element of $\sSetPCN$, with $P= \pstr$. It follows that the bifibrant objects of $\sStratCN$ are precisely the abstract stratified homotopy types. 
    Next, consider the simplicial functor category $\siCat^{[1]}$, i.e. the simplicial category of arrows in $\siCat$.
    Given two such fibrant objects $\str$ and $\str[Y]$, $\siCat^{[1]}( \str ,\str[Y])[n]$ is given by the set of such morphisms  \[ F \colon( \ustr \times \Delta^n \to \pstr \times \Delta^n) \to (\ustr[Y] \to \pstr[Y]) \]
    which fulfill
    \[
    F_0(\{x\} \times \Delta^n) \subset \ustr[Y]^{\simeq} \textnormal{ and } F_1(\{p\} \times \Delta^n) \subset \pstr[Y]^{\simeq} \textnormal{ for } x\in \ustr,p \in P.
    \] Note that since $\sstr[Y]$ is a conservative functor, the condition that $F(\{x\} \times \Delta^n) \subset \ustr[Y]^{\simeq}$, for $x \in \ustr$, is redundant.
    Furthermore, $P ^{\simeq}$ is discrete (every isomorphism in a poset is the identity). Therefore, the condition $F_1(\{p\} \times \Delta^n)\subset \pstr[Y]^{\simeq}$ is equivalent to saying that $F_1 \colon \pos \times \Delta^n \to \pstr[Y]$ is of the form $\pos \times \Delta^n \to \pos \xrightarrow{u} \pstr[Y]$. Hence, $\siCat^{[1]}( \str ,\str[Y])[n]$ is equivalently the set of stratified maps 
    \[
    \str \otimes \Delta^n \to \str[Y]
    \]
    which is precisely $\sStrat(\str,\str[Y]) [n]$. To summarize, we have shown that if we denote by  
    $\sStrat^{o}$ the simplicial category of bifibrant objects in $\sStratC$, 
    then $\sStrat^{o}$ is even isomorphic to the full subcategory of $\siCat^{[1]}$ given by conservative functors into a poset. Making use of this, we treat $\sStrat^{o}$ as a full subcategory of $\siCat^{[1]}$.
    Denote by $(\siCat^{[1]})^{o}$ the full subcategory of $\siCat^{[1]}$ given by such functors $f \colon X \to Y$ that are an iso-fibration, i.e. the full simplicial subcategory of bifibrant objects in the injective model structure. $(\siCat^{[1]})^{o}$ is a model for the category of arrows in $\iCat$ in terms of simplicial categories. More precisely, if we denote by $\mathcal{I}\cat[sofib]$ the full subcategory of $\iCat^{\Delta^1}$ of isofibrations, then there is a natural zig-zag of Joyal-equivalences
    \[
    \iCat^{\Delta^1} \xhookleftarrow{\simeq} \mathcal{I}\cat[sofib] \xrightarrow{\simeq}\nerve \big{(}(\siCat^{[1]})^o \big{)} \spaceperiod
    \]
    The left-hand side equivalence follows from fibrant replacement in the Joyal model structure. The right-hand equivalence is induced by the natural transformations $\mathcal{S}( \Delta^n \times \Delta^1) \to \mathcal{S}( \Delta^n) \times [1]$, where $\mathcal{S}$ is the left-adjoint of the homotopy coherent nerve, and is a weak equivalence by \cite[A.3.4.13.]{HigherTopos} applied to the model structure of marked simplicial sets presenting $(\infty,1)$-categories.
    Note that every conservative functor from a quasi-category into a poset is necessarily an isofibration. Indeed, every functor with target the nerve of a $1$-category is an inner fibration, and every isomorphism in a poset is the identity, which clearly admits a lift. It follows that $\AbStr \subset \mathcal{I}\cat[sofib]$ as a full subcategory and that $\sStrat^{o} \subset (\siCat^{[1]})^{o}$. We thus obtain a commutative square
    \begin{diagram}{}
        \mathcal{I}\cat[sofib] \arrow[r, "\simeq"] & \nerve \big ((\siCat^{[1]})^{o} \big ) \\
        \AbStr \arrow[u, hook ,"f.f."] \arrow[r, dashed] & \nerve (\sStrat^{o}) \arrow[u, "f.f.", hook]\spacecomma
    \end{diagram}
    with the lower horizontal induced by the fact that the composition of the left vertical and right horizontal has image in $\nerve (\sStrat^{o})$. This dashed functor even is a bijection on objects, as we have already noted in the beginning of this proof. Furthermore, by commutativity of the diagram, it is fully faithful. 
    Hence, we have constructed an equivalence of quasi-categories $\AbStr \simeq \nerve (\sStrat^{o})$ as claimed.
\end{proof}
Next, we gather some general properties of the model categories $\sStratC$ and $\sStratD$. We begin with the following general lemma.
\begin{lemma}\label{lem:gen_for_global_mod}
    In the situation of \cref{lem:gluing_model_struct}, assume that $P : \textnormal {\textbf {M}} \to \textnormal {\textbf{B}}$ admits a left adjoint $L \colon \textnormal {\textbf{B}} \to \textnormal {\textbf{M}}$. Furthermore, let $S$ be a set of morphisms in $\textnormal {\textbf{B}}$, such that a morphism $u$ in $\textnormal {\textbf{B}}$ is an isomorphism if and only if it has the right lifting property with respect to $S$. Let $I$ be a set of (acyclic) cofibrations in $\textnormal {\textbf{M}}$ such that:
    \begin{enumerate}
        \item Each $i \in I$ is contained in some fiber $\textnormal {\textbf{M}}_A$, for some $A \in \textnormal {\textbf{B}}$.
        \item For each $A \in \textnormal {\textbf{B}}$, the set
            \[
                \{ u_!i \mid i \in I, u \colon B \to A, B \in \textnormal{\textbf{B}} \} \]
                is a set of generating (acyclic) cofibrations for $\textnormal {\textbf{M}}_A$.
    \end{enumerate}
    Then $L(S) \cup I$ ($I$) is a set of generating (acyclic) cofibrations for $\textnormal {\textbf{M}}$.
\end{lemma}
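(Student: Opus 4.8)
The plan is to verify the two standard conditions for a set to generate (acyclic) cofibrations in the model category $\textnormal{\textbf{M}}$ obtained from \cref{lem:gluing_model_struct}, namely that maps with the right lifting property against the proposed set are exactly the (acyclic) fibrations, and that the proposed set admits a small object argument. I will treat the cofibration case; the acyclic case is strictly simpler since $L(S)$ is dropped. First I would recall, from the defining classes in \cref{lem:gluing_model_struct}, that a morphism $f \colon X \to Y$ in $\textnormal{\textbf{M}}$ is a fibration if and only if $P(f)$ is an isomorphism \emph{and} the comparison $\baseBack$ into the pullback $P(f)^*Y$ is a fibration in the fiber $\textnormal{\textbf{M}}_{P(X)}$. (Strictly, fibrations in \cref{lem:gluing_model_struct} are those with $\baseBack$ a fibration in $\textnormal{\textbf{M}}_{P(X)}$; one must observe that $\baseBack$ being a fibration forces $P(f)$ to be an isomorphism once $S$-lifting is in play, or rather that the full right lifting property against $L(S) \cup I$ simultaneously forces both. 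I would make this precise below.)

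The key steps are as follows. \textbf{Step 1: right lifting against $L(S)$ detects that $P(f)$ is an isomorphism.} Using the adjunction $L \dashv P$, a lifting problem of $s \in S$ against $f$ in $\textnormal{\textbf{M}}$ transposes to a lifting problem of $s$ against $P(f)$ in $\textnormal{\textbf{B}}$; here one uses that $L(s)$ has, by construction, target and source lying appropriately and that the adjunction identifies $\textnormal{\textbf{M}}(L(B), X) \cong \textnormal{\textbf{B}}(B, P(X))$. Hence $f$ has the right lifting property against $L(S)$ if and only if $P(f)$ has the right lifting property against $S$, which by hypothesis holds if and only if $P(f)$ is an isomorphism. \textbf{Step 2: assuming $P(f)$ is an isomorphism, right lifting against $I$ detects that $f$ is a fibration.} When $P(f)$ is an isomorphism we may identify $f$ with a morphism $\baseBack \colon X \to P(f)^*Y$ lying in the single fiber $\textnormal{\textbf{M}}_{P(X)}$, and $f$ is a fibration precisely when this fiber morphism is. By condition (2), the generating cofibrations of $\textnormal{\textbf{M}}_{P(X)}$ are exactly the maps $u_! i$ for $i \in I$, $u \colon B \to P(X)$. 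A lifting problem of $u_! i$ against a fiber map transposes, via the fiberwise adjunction $u_! \dashv u^*$ restricted to the relevant fibers (and using that $f$ now lives in one fiber so the relevant $u^*$ is the fiberwise right adjoint, whose existence is part of $P$ being a bifibration), into a lifting problem of $i$ itself against $u^* \baseBack$ — but more directly: a map in $\textnormal{\textbf{M}}$ lying in fiber $\textnormal{\textbf{M}}_{P(X)}$ has the right lifting property against $u_! i$ (as a morphism of $\textnormal{\textbf{M}}$) if and only if its fiber-version does, because $i$ lies in $\textnormal{\textbf{M}}_B$ and any lifting square of $u_! i \to f$ factors through the unit, reducing to a fiber lifting square over $u$. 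Combining, $f$ lifts against all of $I$ iff $\baseBack$ is a fibration in $\textnormal{\textbf{M}}_{P(X)}$ iff $f$ is a fibration.

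\textbf{Step 3: the small object argument.} Each $i \in I$ has small source in its fiber $\textnormal{\textbf{M}}_A$; I would argue that fiber inclusions preserve the relevant smallness (colimits in a fiber are computed as colimits in $\textnormal{\textbf{M}}$ provided the base-component is constant, which holds along any filtered — or in fact arbitrary — diagram with fixed base object, and the small object argument only needs a bound relative to a fixed cardinal), and likewise each $L(s)$ has small source since $L$ is a left adjoint hence preserves colimits, and sources of $S$-maps may be taken small. Thus $L(S) \cup I$ permits the small object argument. Together with Steps 1--2, Kan's recognition criterion gives that $L(S) \cup I$ is a generating set of cofibrations, and that the cofibrations are exactly the retracts of relative $(L(S) \cup I)$-cell complexes; dropping $L(S)$ and replacing $I$ by acyclic generators handles the acyclic case verbatim, since there one does not need to constrain $P(f)$ (an acyclic fibration already has $P(f)$ an isomorphism and its fiber-comparison an acyclic fibration, by \cref{lem:gluing_model_struct}(1),(3)).

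The main obstacle I anticipate is \textbf{Step 2}, specifically making airtight the reduction of an $\textnormal{\textbf{M}}$-lifting problem against $u_! i$ to a lifting problem in a single fiber. One must carefully use that $P$ is a \emph{bi}fibration: the left adjoints $u_!$ are needed to even state condition (2), but the \emph{right} adjoints $u^*$ (cartesian lifts) are what let one transpose a square whose left leg is $u_! i$ — a map in $\textnormal{\textbf{M}}$ sitting over $u \colon B \to A$ — into a square entirely in $\textnormal{\textbf{M}}_A$ (or in $\textnormal{\textbf{M}}_B$) when the right leg $f$ sits over an identity. The subtlety is that a priori the square $u_! i \to f$ in $\textnormal{\textbf{M}}$ need not have its top and bottom maps in any single fiber; one recovers control by projecting to $\textnormal{\textbf{B}}$, observing $P$ of the square forces the relevant base maps to factor through $u$ (since the bottom-left object maps to $A = P(f)$-things via the structure of $u_! i$), and then using the universal property of $u_!$. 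A clean way to package all of this is to note that for a morphism $g$ lying in a fiber $\textnormal{\textbf{M}}_A$, the set of $\textnormal{\textbf{M}}$-morphisms it has the right lifting property against, \emph{among} morphisms of the form $u_! i$, is governed entirely by its behaviour in $\textnormal{\textbf{M}}_A$ — which is exactly the content of condition (2) once one has Step 1 in hand. I would spell this bookkeeping out with one explicit transposition diagram and otherwise cite the standard bifibration calculus.
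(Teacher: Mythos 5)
Your Steps 1 and 2 follow the paper's argument: transpose $L(S)$-lifting across $L \dashv P$ to conclude $P(f)$ is an isomorphism, then reduce $I$-lifting in $\textnormal{\textbf{M}}$ to a lifting problem in a single fiber and invoke hypothesis (2). The one place you should tighten is the phrasing of Step 2. The paper's reduction is cleaner and goes the opposite direction from how you set it up: it starts with a lifting square in $\textnormal{\textbf{M}}$ whose left leg is $i \in I$ (lying in $\textnormal{\textbf{M}}_B$) and whose right leg is $f$ over an identity, observes the top and bottom maps both project to the same $u = P(g) \colon B \to P(X)$ in $\textnormal{\textbf{B}}$, and then uses the opcartesian factorization (the left-fibration structure, not $u^*$) to replace the square by the induced square with left leg $u_!i$ sitting entirely in $\textnormal{\textbf{M}}_{P(X)}$; solutions to the two problems biject. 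Your text repeatedly speaks of lifting against ``$u_! i$ as a morphism of $\textnormal{\textbf{M}}$'' and of transposing via $u^*$, which conflates this with a different (and unnecessary) comparison; you do converge on the correct idea in the ``obstacle'' paragraph, but the notation $u_! i$ there should be $i$ itself, with $u$ supplied by the base component of the given square, and the ``unit'' should be the opcartesian lift, not the adjunction unit.

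Two further remarks. First, the paper's proof addresses only the right-lifting characterizations and does not touch smallness at all (in its applications this is automatic since all categories in sight are locally presentable), so your Step 3 is extra content; within it, the claim that fiber colimits over a constant base agree with total-category colimits for ``arbitrary'' diagrams is false (already the empty diagram fails), though it does hold for the connected, transfinite diagrams the small object argument needs, so the conclusion survives. Second, your opening parenthetical misstates the glued model structure: being a fibration there does \emph{not} require $P(f)$ to be an isomorphism; it is only for \emph{acyclic} fibrations (and hence for the cofibration-generation half of the statement, which is the case you actually prove) that the isomorphism requirement enters, which is exactly why $L(S)$ appears in that half but not in the acyclic-cofibration half.
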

\begin{proof}
    We prove the case of cofibrations. Consider a morphism $f \colon X \to Y$ in $\mathcal{M}$. We need to show that $f$ is an acyclic fibration (i.e. $P(f)$ is an isomorphism and $\baseBack$ is an acyclic fibration in $\mathcal{M}_{P(X)}$), if and only if $f$ has the right lifting property with respect to $L(S) \cup I$. 
    Note that by the adjunction $L \dashv P$, the map $P(f)$ is an isomorphism, if and only if $f$ has the right lifting property with respect to $L(S)$. 
    Hence, in the following we may assume without loss of generality that $P(f) = 1_X$.
    Next, note that since $P$ is a Grothendieck left fibration, any lifting problem
        \begin{diagram}
            X_0 \arrow[d, "i"] \arrow[r, "g"] &  X \arrow[d, "f"] \\
            X_1 \arrow[r]\arrow[ru, dashed]& Y
        \end{diagram}
        with $P(i)$ an identity is equivalent to a unique lifting problem 
        \begin{diagram}
            P(g)_! X_0 \arrow[d, "P(g)_! i"'] \arrow[r] &  X \arrow[d, "f"]\\
            P(g)_!X_1 \arrow[r]\arrow[ru, dashed]& Y.
    \end{diagram}
    Hence, as $\{ u_!i \mid i \in I, u \colon B \to A, B \in \mathcal{A} \} $ is a set of generating cofibrations for $\mathcal{M}_{P(X)}$, follows that $\baseBack$ is an acyclic fibration, if and only if $f$ has the right lifting property with respect to $I$.
\end{proof}
\begin{corollary}\label{cor:cof_gen_sStratD}
    The model category $\sStratD$ is cofibrantly generated. A generating set of cofibrations is given by the set of stratified boundary inclusions $\{ \stratBound  \hookrightarrow \stratSim \mid n \in \mathbb N\}$, together with the two morphisms
    \begin{diagram}
        \emptyset \arrow[r]  \arrow[d] & \emptyset \arrow[d] & & \emptyset  \arrow[r]  \arrow[d] & \emptyset \arrow[d] \\
        \emptyset \arrow[r] & {[0]}                           \spacecomma& &{[0] \sqcup [0]} \arrow[r, hook] & {[1]} \spaceperiod
    \end{diagram}
    A generating set of acyclic cofibrations for $\sStratD$ is given by the set of admissible horn inclusions
    \begin{diagram}
        \Lambda_k^n \arrow[rr] \arrow[rd] & &\Delta^n \arrow[ld] \\
        & {[m]} &,
    \end{diagram}
    for $n,m \in \mathbb N$. 
\end{corollary}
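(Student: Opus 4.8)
The plan is to obtain the claim as a direct application of \cref{lem:gen_for_global_mod} to the forgetful functor $P \colon \sStrat \to \Pos$, with the fiberwise Douteau--Henriques model structures; that we are in the situation of \cref{lem:gluing_model_struct} is exactly how $\sStratD$ was defined in \cref{def:non_ref_mod_struct}. First I would exhibit the left adjoint $L \colon \Pos \to \sStrat$ required by the lemma: it sends a poset $Q$ to the $Q$-stratified simplicial set $L(Q)$ with empty underlying simplicial set, since a stratum-preserving map out of $L(Q)$ is precisely a map of posets out of $Q$. For $S$ I would take the two poset maps $\emptyset \to [0]$ and $[0]\sqcup[0] \hookrightarrow [1]$; applying $L$ to these yields precisely the two squares displayed in the statement, so it will remain to take $I$ to be, in the cofibration case, the set of stratified boundary inclusions $\{\stratBound \hookrightarrow \stratSim \mid n \in \mathbb{N}\}$, and in the acyclic case, the set of admissible horn inclusions over the finite linear orders $[m]$, and to check the remaining hypotheses of \cref{lem:gen_for_global_mod}.

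Next I would check that $S$ detects isomorphisms by right lifting, as the lemma demands. A poset map $u \colon Q \to Q'$ has the right lifting property against $\emptyset \to [0]$ exactly when it is surjective, and against $[0]\sqcup[0] \hookrightarrow [1]$ exactly when it reflects the order, i.e.\ $u(a)\le u(b)$ implies $a \le b$ (taking $u(a)=u(b)$ this in particular forces injectivity). A surjective, order-reflecting map of posets is an isomorphism, and isomorphisms lift against everything, so $S$ has the required property.

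Finally I would verify the conditions on $I$. First, each element of $I$ is a cofibration (resp.\ acyclic cofibration) of $\sStratD$: by the characterization in \cref{lem:gluing_model_struct}, a map with identity image in $\Pos$ is a cofibration of $\sStratD$ iff it is one in its fiber, and a fiberwise acyclic cofibration over an identity is moreover a weak equivalence of $\sStratD$; and $\stratBound \hookrightarrow \stratSim$ is a cofibration of the fiber over $[n]$, while an admissible horn inclusion over $[m]$ is by construction a generating acyclic cofibration of the fiber over $[m]$. Condition~(1) of \cref{lem:gen_for_global_mod} is then immediate. For condition~(2), fix a poset $A$. In the cofibration case, $\{u_!(\stratBound \hookrightarrow \stratSim) \mid u \colon [n] \to A,\ n \in \mathbb{N}\}$ is exactly the set of boundary inclusions over the simplices of $A$, which by the construction of the Douteau--Henriques model structure on $\sSetP[A]$ is a generating set of cofibrations there. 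In the acyclic case, by \cref{prop:basechange} every map in $\{u_! i \mid i \in I,\ u \colon [m] \to A\}$ is an acyclic cofibration of $\sSetP[A]$, and this set contains every admissible horn inclusion over $A$: given such a horn with structure map $\sigma \colon [n] \to A$, factor $\sigma = u \circ \bar\sigma$ through its image, with $\bar\sigma \colon [n] \twoheadrightarrow [m]$ and $u \colon [m] \hookrightarrow A$ the inclusion of the chain $\sigma([n])$; since $u$ preserves and reflects the order, the horn over $[m]$ with structure map $\bar\sigma$ is again admissible, and $u_!$ carries it to the given horn over $A$. Since the admissible horn inclusions over $A$ already generate the acyclic cofibrations of $\sSetP[A]$, and a set of acyclic cofibrations containing a generating set is again one, condition~(2) holds. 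Then \cref{lem:gen_for_global_mod} yields that $L(S) \cup I$, resp.\ $I$, generates the cofibrations, resp.\ acyclic cofibrations, of $\sStratD$; all domains occurring are finite stratified simplicial sets, hence small, so $\sStratD$ is cofibrantly generated.

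The only point I expect to require genuine care is condition~(2) for acyclic cofibrations, namely exhibiting every admissible horn inclusion over an arbitrary poset $A$ as a $u_!$-image of an admissible horn inclusion over a finite linear order. This relies on the facts that the structure map of any simplex factors through the chain which is its image and that admissibility is insensitive to an order-reflecting change of ambient poset --- both true but worth writing out carefully.
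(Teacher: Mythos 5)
Your proposal follows the paper's strategy exactly: apply \cref{lem:gen_for_global_mod} to the forgetful functor $\sStrat \to \Pos$, with $L$ the empty-underlying-simplicial-set left adjoint, $S = \{\emptyset \to [0],\ [0]\sqcup[0]\hookrightarrow[1]\}$, and $I$ the stated stratified boundary inclusions (resp.\ admissible horn inclusions over linear orders $[m]$). Your elaboration of condition (2) in the acyclic case --- factoring the structure map $\sigma\colon[n]\to A$ of an arbitrary admissible horn over $A$ through the chain that is its image, and observing that admissibility depends only on the fibers of $\sigma$ and is hence preserved by that factorization --- is a correct unpacking of a step the paper dismisses with ``clearly''.

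There is, however, one genuine gap. You assert that each $\stratBound \hookrightarrow \stratSim$ lies in a single fiber $\sSetP[{[n]}]$, so that condition (1) of \cref{lem:gen_for_global_mod} is satisfied by $I$ as given. With the paper's conventions this fails for small $n$: as the paper's proof points out, $\stratBound[1]\hookrightarrow\stratSim$ is $\lstr(\partial\Delta^1 \hookrightarrow \Delta^1)$, whose domain has underlying poset $[0]\sqcup[0]$ while its codomain has underlying poset $[1]$, so this morphism is not contained in any fiber of $\sStrat\to\Pos$, and \cref{lem:gen_for_global_mod} cannot be applied to it as stated (the $n=0$ case has the same issue, since $\lstr(\emptyset)$ has underlying poset $\emptyset$). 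The paper repairs this by pushing the offending boundary inclusion out along the poset-level inclusion $[0]\sqcup[0]\hookrightarrow[1]$, producing a cofibration that does live in the fiber over $[1]$, and taking $I$ to be the set so modified; implicit is then the check that $L(S)\cup I$ generates the same weak factorization system as the set announced in the statement. You need a step of this kind in your argument: either carry out the replacement and justify that it is harmless, or make explicit that you are reading $\stratBound$ as $\partial\Delta^n$ equipped with the stratification induced from $\Delta^{[n]}$ (under which your fiber claim is correct for all $n$), in which case the discrepancy with the boundary inclusions as written in the statement must still be reconciled.
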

\begin{proof}
    This is a consequence of \cref{lem:gen_for_global_mod}. Note that $i \colon \lstr (\partial \Delta^1 \hookrightarrow \Delta^1) =(\stratBound[1] \hookrightarrow \stratSim)$ while being a cofibration, is not contained in a fiber of $\sStrat \to \Pos$. However, we may replace $i$ by the stratified simplicial map obtained by pushing out along the stratified simplicial map 
    \begin{diagram}
        \partial \Delta^1 \arrow[r, "1"] \arrow[d, "{\sstr[]}_{\lstr ( \partial \Delta^1)}"]& \partial \Delta^1  \arrow[d, hook] \\
       { [0] \sqcup [0]} \arrow[r, hook] & {[1]} \spaceperiod
    \end{diagram}
    Denote by $I$ the class of cofibrations in $\sStrat$ obtained in this manner. 
    Now, let us verify the requirements of \cref{lem:gen_for_global_mod}. First, note that the forgetful functor $\sStrat \to \Pos$ admits a left adjoint given by mapping $\pos$ to $ \emptyset \to \pos$. The two morphisms of posets $\emptyset \to [0]$ and $[0] \sqcup [0] \hookrightarrow [1]$ detect all isomorphisms of posets. Indeed, the former detects surjectivity. The latter detects surjectivity on relations. Any morphism of posets which is surjective on points and relations is necessarily an isomorphism. For any $P \in \Pos$, the stratified boundary inclusions over $\pos$, together with the admissible horn inclusions, form sets of (acyclic) cofibrant generators (see \cref{rec:Dou-Hen-mod}). Clearly, the elements of these sets are respectively of the form $u_!(i)$, for $i \in I$ or $i$ an admissible horn inclusion as in the statement of the corollary, where $u$ is an appropriate map of posets with target $\pos$.
\end{proof}
Next, we show that $\sStratC$ is cofibrantly generated. However, since we lack an explicit set of acyclic generators for $\sSetPC$, some additional work needs to be done to show that there is a set of acyclic generators for $\sStratC$.
We are going to take a slight detour to see this. As a corollary of \cref{prop:loc}, we have:
\begin{proposition}\label{prop:c_is_left_bous_of_d}
    $\sStratC$ is the left Bousfield localization of $\sStratD$ at the class of inner stratified horn inclusions
   \[ \stratHorn \hookrightarrow \stratSim\]
    for $0 < k < n$.
\end{proposition}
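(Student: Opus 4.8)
The plan is to identify $\sStratC$ with the left Bousfield localization $L_S(\sStratD)$ at the set $S$ of inner stratified horn inclusions $\stratHorn\hookrightarrow\stratSim$, $0<k<n$, by matching up cofibrations and fibrant objects. I would start with the routine comparisons. By \cref{lem:gluing_model_struct}, a stratified map $f\colon\str[X]\to\str[Y]$ is a cofibration in $\sStratD$ (resp.\ $\sStratC$) exactly when $\baseForw[f]$ is a cofibration in the fibre over $\pstr[Y]$ for the Douteau--Henriques (resp.\ Joyal--Kan) structure, and is a weak equivalence exactly when $P(f)$ is an isomorphism of posets and $\baseBack[f]$ is a weak equivalence in the corresponding structure on the fibre over $\pstr[X]$. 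Since, by \cref{prop:loc}, the Joyal--Kan structure on each $\sSetP[\pos]$ is a left Bousfield localization of the Douteau--Henriques one, these two fibrewise structures have the same cofibrations and the Joyal--Kan one has more weak equivalences; hence $\sStratD$ and $\sStratC$ have the same cofibrations and the identity functor $\sStratD\to\sStratC$ is left Quillen. It moreover sends each inner stratified horn inclusion to an acyclic cofibration of $\sStratC$, since such a map has identity image in $\Pos$ and is fibrewise an acyclic cofibration of the Joyal--Kan structure by \cref{prop:loc}.

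Next I would observe that $\sStratD$ is combinatorial (by \cref{cor:cof_gen_sStratD} together with local presentability of $\sStrat$), simplicial (by \cref{lem:gluing_model_struct}), and left proper --- every object being cofibrant, as one reads off the generators in \cref{cor:cof_gen_sStratD} --- and that $S$, being indexed by the pairs $(n,k)$ with $0<k<n$, is an honest set. Therefore $L_S(\sStratD)$ exists, is again combinatorial and simplicial, and shares its cofibrations with $\sStratD$, hence with $\sStratC$. It then suffices to prove $\sStratC=L_S(\sStratD)$, and here I would invoke the recognition principle that two simplicial model structures on the same category, all of whose objects are cofibrant, with the same cofibrations and the same fibrant objects, coincide. (Under these hypotheses a map is a weak equivalence iff the induced map on simplicial mapping spaces into each fibrant object is a weak homotopy equivalence --- one direction by the SM7 axiom together with Ken Brown's lemma, the other by factoring the map, passing to fibrant--cofibrant replacements of source and target, and a Whitehead-type argument --- so the weak equivalences are recovered from the cofibrations and the fibrant objects.) Now the fibrant objects of $L_S(\sStratD)$ are its $S$-local objects, while those of $\sStratC$ are, as in the proof of \cref{prop:sStraC_pres_Astrat}, the stratified simplicial sets $\str$ that are Joyal--Kan fibrant in the fibre over $\pstr$, equivalently (by \cref{prop:loc}) Douteau--Henriques fibrant there and local with respect to all inner stratified horn inclusions over $\pstr$.

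The remaining --- and, I expect, most delicate --- point is that these two descriptions of the fibrant objects agree, i.e.\ that global $S$-locality over $\sStratD$ collapses to fibrewise locality. Since $\sStrat\to\Pos$ is a Grothendieck opfibration with fibres $\sSetP[\pos]$, the simplicial mapping space out of an inner stratified horn inclusion $s\colon\stratHorn\hookrightarrow\stratSim$, which lives over $[n]$, decomposes naturally in $s$ as $\sStrat(\stratSim,\str)\cong\coprod_{h\colon[n]\to\pstr}\sSetP[\pstr](h_!\stratSim,\str)$, where $h_!s$ is the inner stratified horn inclusion over $\pstr$ with stratification $h$; as $h$ ranges over all maps $[n]\to\pstr$, these $h_!s$ exhaust the inner stratified horn inclusions over $\pstr$. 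As a coproduct of maps of Kan complexes is a weak equivalence precisely when each summand is --- and since every object is cofibrant, these simplicial mapping spaces already compute the derived ones --- it follows that a $\sStratD$-fibrant $\str$ is $S$-local exactly when it is local, for the Douteau--Henriques structure over $\pstr$, with respect to every inner stratified horn inclusion over $\pstr$. That is precisely the condition characterizing $\sStratC$-fibrancy, so $\sStratC$ and $L_S(\sStratD)$ have the same fibrant objects, and the recognition principle finishes the proof. The principal obstacle is exactly this last reduction, which rests on the opfibration structure of $\sStrat\to\Pos$ and on the compatibility of the fibrewise simplicial enrichments with that of $\sStrat$, both supplied by \cref{lem:gluing_model_struct}.
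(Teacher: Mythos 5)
Your proof is correct. The paper itself gives no argument here, stating the result only as ``a corollary of \cref{prop:loc}''; what you have done is make that assertion precise, and I believe your route is the one the authors have in mind. The essential content is exactly the step you flag as delicate: the natural coproduct decomposition $\sStrat(\stratSim,\str)\cong\coprod_{h\colon[n]\to\pstr}\sSetP[\pstr](h_!\stratSim,\str)$ (and likewise for $\stratHorn$, with the decompositions compatible along $s$ because $s$ lives over the identity of $[n]$), which shows that global $S$-locality of a $\sStratD$-fibrant object is the same as fibrewise locality over $\pstr$ with respect to all inner stratified horn inclusions, i.e.\ $\sSetPC[\pstr]$-fibrancy by \cref{prop:loc}. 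The surrounding scaffolding --- same cofibrations in $\sStratD$, $\sStratC$ and $L_S(\sStratD)$; existence of $L_S(\sStratD)$ from combinatoriality, simpliciality and left properness (all objects cofibrant); and the recognition principle that two simplicial model structures with all objects cofibrant, the same cofibrations and the same fibrant objects coincide --- is standard and correctly applied. One might streamline slightly by instead arguing directly that the weak equivalences of $\sStratC$ agree with the $S$-local equivalences of $\sStratD$ (both are detected by mapping into the common class of fibrant objects, using the same coproduct decomposition), which avoids stating the recognition principle as a separate lemma, but this is a presentational difference rather than a substantive one.
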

We may then conclude:
\begin{proposition}\label{prop:cof_comb}
    The simplicial model categories $\sStratC$ and $\sStratD$ are cofibrant and combinatorial.
\end{proposition}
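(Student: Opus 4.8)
The plan is to treat the two claims, cofibrancy and combinatoriality, in turn, establishing cofibrancy first since it also delivers left properness, which is needed for the second half.

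\emph{Every object is cofibrant.} By the characterization of cofibrations in \cref{lem:gluing_model_struct}, an object $\str$ is cofibrant in $\sStratD$ --- equivalently in $\sStratC$, which has the same cofibrations by \cref{prop:c_is_left_bous_of_d} --- precisely when, for the unique map $f\colon\emptyset\to\str$ out of the initial object, the comparison $\baseForw[f]$ is a cofibration in the fiber $\sSetP[\pstr]$. Unwinding, $P(f)$ is the unique poset map into $\pstr$ and $\baseForw[f]$ is just the map $\emptyset\to\ustr$ inside $\sSetP[\pstr]$. But $\sSetP[\pstr]$ is a presheaf category (a slice of $\sSet$), its Douteau--Henriques cofibrations are generated by the stratified boundary inclusions over $\pstr$, that is by the boundary inclusions of its representables, and are therefore exactly its monomorphisms; in particular $\emptyset\to\ustr$ is one of them. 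Hence every object of $\sStratD$ and of $\sStratC$ is cofibrant, and consequently --- by the gluing lemma applied to spans $C\leftarrow A\to B$ whose left leg is a cofibration, whose right leg is a weak equivalence, and all of whose objects are cofibrant --- both model categories are left proper.

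\emph{Combinatoriality.} Next I would check that the underlying category $\sStrat$ is locally presentable. It is the comma category $(\mathrm{id}_{\sSet}\downarrow N)$ of the nerve $N\colon\Pos\to\sSet$, with objects the triples $(X,\pos,X\to N\pos)$. Now $\sSet$ and $\Pos$ are locally (finitely) presentable, and $N$ is accessible: $\Pos$ is closed under filtered colimits formed in the category of all small categories, and the nerve preserves filtered colimits since it is finitary in every simplicial degree. By the standard closure of local presentability under comma categories along accessible functors, $\sStrat$ is therefore locally presentable. Combined with the explicit generating (acyclic) cofibrations produced in \cref{cor:cof_gen_sStratD}, this shows that $\sStratD$ is combinatorial. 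For $\sStratC$, I would invoke \cref{prop:c_is_left_bous_of_d}, which identifies it with the left Bousfield localization of $\sStratD$ at the \emph{set} $\{\stratHorn\hookrightarrow\stratSim \mid 0<k<n\}$ of inner stratified horn inclusions. Since $\sStratD$ is simplicial, combinatorial and left proper, its left Bousfield localization at a set of maps exists and is again simplicial, combinatorial and left proper by the general existence theory for such localizations (see e.g. \cite[\S A.3.7]{HigherTopos}); hence $\sStratC$ is combinatorial as well.

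The only point I expect to require more than routine bookkeeping is the local presentability of $\sStrat$; everything else is either fiberwise input recalled from \cref{rec:Dou-Hen-mod} and the previous section, or a direct application of the cited theorems. If one prefers a fully self-contained argument for local presentability, one can instead present $\sStrat$ as the category of models of a limit sketch built from the simplex category together with the finitely many operations and relations that encode a poset, but routing through comma categories is the most economical option.
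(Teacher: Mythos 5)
Your proof is correct, and it follows the paper's high-level outline (cofibrancy directly, then combinatoriality of $\sStratD$ via local presentability of $\sStrat$ and the generating sets from \cref{cor:cof_gen_sStratD}, then combinatoriality of $\sStratC$ from \cref{prop:c_is_left_bous_of_d} together with Barwick's theorem). The one point where you take a genuinely different route is local presentability: the paper argues directly that $\sStrat$ is generated under filtered colimits by the (finitely presentable) sources and targets of the generating cofibrations, whereas you exhibit $\sStrat$ as the comma category $(\mathrm{id}_{\sSet}\downarrow N)$ and invoke closure of local presentability under comma categories along accessible functors. Both are valid; your route is more structural and offloads the work to Adámek–Rosický, while the paper's is more hands-on. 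You are also more careful on two counts the paper elides: you spell out why cofibrancy is ``obvious'' (the forward comparison map $\emptyset\to\ustr$ is a monomorphism in the fiber), and you note explicitly that cofibrancy of all objects yields left properness, which is in fact a hypothesis of \cite[Thm.~4.7]{BarwickLeftRight} that the paper's proof leaves implicit. These additions strengthen the exposition without changing the mathematical content.
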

\begin{proof}
    Cofibrancy is obvious. It is not hard to see that $\sStrat$ is generated by the sources and targets of the generating cofibrations in \cref{cor:cof_gen_sStratD}, under filtered colimits. It follows that $\sStrat$ is finitely locally presentable. Since $\sStratD$ is cofibrantly generated, we may hence conclude that $\sStratD$ is combinatorial. As $\sStratC$ is a left Bousfield localization of $\sStratD$ at a set of morphisms, it follows by \cite[Thm. 4.7]{BarwickLeftRight} that $\sStratC$ is also combinatorial.
\end{proof}
\subsection{Model structures of refined stratified simplicial sets}\label{subsec:refining_strat_sset}
For classical examples of stratified spaces the stratification poset is usually strongly related to the topology of the underlying space. In fact, originally, the poset structure arises from the closure containment relation on a partition of a space into disjoint subsets (\cite{mather1970notes}). For general stratified simplicial sets, $\str$, the only relationship between the underlying object and $\pos$ is that the existence of an edge $x \to y$ implies a relation $\sstr(x) \leq \sstr(y)$. 
This degree of generality is, of course, necessary when we are working over a fixed poset, at least if we want to have access to all stratified simplices over $\Pos$.
If we allow for flexible posets, however, then this amount of generality has some peculiar side effects. In fact, we may take it to the extreme as follows:
\begin{remark}\label{rem:so_many_empty_spaces}
    Denote by $L \colon \Pos \to \sStratN$ the left adjoint to the forgetful functor $\sStratN \to \Pos$, given by $\pos \mapsto ( \emptyset \to \pos)$. Clearly, $L$ is fully faithful. If we equip $\Pos$ with the trivial model structure (in which weak equivalences are precisely the isomorphisms, and all maps are cofibrations and fibrations), then $L$ becomes a left Quillen functor with target $\sStratCN$ ($\sStratDN$). One may then verify that $L$ induces a fully faithful embedding $\Pos \hookrightarrow \ho \sStratCN$ ($\ho \sStratDN$). In other words, the homotopy category $\ho \sStratCN$ contains a complete copy of $\Pos$, consisting of empty stratified simplicial sets.    
\end{remark}
We may aim for a notion of stratified simplicial sets for which the poset structure is minimal in some sense, which at least should imply that maps are uniquely determined on the level of simplicial sets. To do so, let us first consider the following functor:
\begin{construction}\label{con:posetification}
    Consider the fully faithful inclusion
    \[
    \Pos \hookrightarrow \sSetN
    \]
    given by taking the nerve of a poset. It admits a left adjoint, which we denote $P$, explicitly constructed by sending $K$ to the poset generated from $K_0$ by adding the relation
    \[
    x \leq y \iff \exists f \colon x \to y \textnormal{ with $f \in \tau(X)$.}
    \]
    where $\tau(X)$ is the homotopy category of $X$. In particular, this means that whenever there are arrows $f \colon x \to y$, $g \colon y \to x$ in $\tau(X)$, then $x = y$ in $P(X)$.
\end{construction}
\begin{remark}
    One should be careful to note that there is a certain overload of notation here. Namely, there are two ways of associating to a stratified simplicial set $\str$ a poset. We may either associate to it the poset $\pstr$, or the poset $P(\ustr)$. These two posets will generally be different, as is evident from the fact that $P(\ustr)$ does not depend on the stratification of $\str$.
\end{remark}
It follows immediately from the definition in \cref{con:posetification} that the construction factors through taking homotopy categories and one obtains:
\begin{lemma}\label{lem:pos_of_cat_eq}
    Let $f \colon X \to Y$ in $\sSetN$ be a categorical equivalence. Then $P(f)$ is an isomorphism.
\end{lemma}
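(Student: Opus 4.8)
The plan is to use the factorization indicated just above the statement. Explicitly, the posetification functor splits as a composite
\[
\sSetN \xrightarrow{\ \tau\ } \mathbf{Cat} \xrightarrow{\ \pi\ } \Pos ,
\]
where $\tau$ sends a simplicial set to its homotopy category and $\pi$ is the left adjoint to the inclusion $\Pos \hookrightarrow \mathbf{Cat}$; concretely, $\pi(C)$ has underlying set the objects of $C$ modulo the relation ``there is a morphism $x \to y$ and a morphism $y \to x$'', ordered by $[x] \le [y]$ whenever $\mathrm{Hom}_C(x,y) \ne \emptyset$. That $P$ coincides with $\pi \circ \tau$ is precisely the content of \cref{con:posetification} (an arrow $x \to y$ in $\tau(X)$ being exactly an edge up to homotopy). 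It therefore suffices to prove the two statements: (i) $\tau$ carries categorical equivalences to equivalences of categories; (ii) $\pi$ carries equivalences of categories to isomorphisms of posets.

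Statement (ii) is purely formal. The functor $\pi$ is invariant under natural isomorphism: if $\alpha \colon F \Rightarrow G$ is a natural isomorphism between functors $C \to D$, then each component $\alpha_x \colon Fx \to Gx$, together with its inverse, witnesses $[Fx] = [Gx]$ in $\pi(D)$; since a morphism of posets is determined by its effect on elements, $\pi(F) = \pi(G)$. Hence if $F$ is an equivalence of categories with quasi-inverse $G$, then $\pi(G)\pi(F) = \pi(GF) = \pi(\mathrm{id}) = \mathrm{id}$ and symmetrically $\pi(F)\pi(G) = \mathrm{id}$, so $\pi(F)$ is an isomorphism.

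Statement (i) is where the content lies. The shortest route is to recall that the nerve $\mathbf{Cat} \to \sSetN$ is right Quillen, where the source carries the Joyal model structure and $\mathbf{Cat}$ the canonical (folk) model structure, whose weak equivalences are the equivalences of categories; its left adjoint $\tau$ is then left Quillen, so by Ken Brown's lemma it preserves weak equivalences between cofibrant objects --- and in the Joyal model structure every object is cofibrant. If one wishes to avoid invoking this Quillen adjunction, one argues directly: $\tau$ preserves colimits and sends each inner horn inclusion $\Lambda^n_k \hookrightarrow \Delta^n$ to an isomorphism, hence inverts all inner anodyne maps; choosing inner anodyne maps $X \hookrightarrow \hat X$, $Y \hookrightarrow \hat Y$ into quasi-categories and lifting $f$ to $\hat f \colon \hat X \to \hat Y$, two-out-of-three shows $\hat f$ is a categorical equivalence of quasi-categories, hence admits a homotopy inverse with respect to the interval given by the nerve of the walking isomorphism; applying $\tau$, which preserves finite products and therefore turns such homotopies into natural isomorphisms, shows that $\tau(\hat f)$ is an equivalence of categories, and since $\tau(X) \to \tau(\hat X)$ and $\tau(Y) \to \tau(\hat Y)$ are isomorphisms, it follows that $\tau(f)$ is an equivalence of categories as well.

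The only step requiring genuine care is this reduction to quasi-categories in (i): $\tau$ applied to an arbitrary non-fibrant simplicial set is not directly controllable, so one must either appeal to the Joyal/folk Quillen adjunction or run the inner-anodyne fibrant-replacement argument above. Statement (ii) and the assembly of the two statements into the conclusion that $P(f) = \pi(\tau(f))$ is an isomorphism are soft.
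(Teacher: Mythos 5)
Your proof is correct and takes the same approach the paper intends: the remark immediately preceding the lemma says $P$ factors through the homotopy-category functor $\tau$, and the lemma is then stated as an immediate consequence, which is precisely your decomposition $P = \pi \circ \tau$ together with the observations that $\tau$ sends categorical equivalences to equivalences of categories and that $\pi$ is invariant under equivalence. The paper leaves all of this implicit, so your proposal simply supplies the details the paper omits.
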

\begin{remark}
Now, to remove redundancies in the stratification poset, at first glance, one may try to invert the stratified maps 
\begin{diagram}
    \ustr \arrow[d] \arrow[r, "1"]& \ustr \arrow[d, "\sstr"] \\
    P(\ustr) \arrow[r] & \pstr.
\end{diagram}
This does, however, not lead to a meaningful homotopy theory of stratified spaces.
Denote the functor $\mathcal X \mapsto (\ustr \to P( \ustr))$ by $(-)^{\red}$. Consider a stratified simplicial set $\str$, and consider $\ustr$ as a trivially stratified simplicial set. Then, if we invert $\str^{\red} \to \str$ and $\ustr^{\red} \to \ustr$, we obtain weak equivalences
\[
\ustr  \simeq  \ustr ^{\red} = \str ^{\red}  \simeq \str,
\]
in other words: We forget all stratifications and simply recover classical homotopy theory.
 Instead, we need to work with a derived version of the functor $\str \mapsto P(\ustr )$, which remembers which paths are within a stratum and should be considered invertible.
\end{remark}

\begin{proposition}\label{prop:computing_der_poset_pre}
    Let $\str,\str[Y] \in \sStrat$ such that all strata of $\str$ and $\str[Y]$ are Kan complexes.
    Then, for any weak equivalence $f \colon \str \to \str[Y]$ in $\sStratC$ the induced morphism of posets $P( \ustr) \to P(\ustr[Y])$ is an isomorphism. 
\end{proposition}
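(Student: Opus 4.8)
The plan is to reduce to the case where $\str$ and $\str[Y]$ are fibrant --- there $\sStratC$ models abstract stratified homotopy types and the underlying simplicial sets are quasi-categories, so \cref{lem:pos_of_cat_eq} becomes available --- and to carry out the fibrant replacement using \emph{only} inner stratified horn fillers, which is where the hypothesis on the strata enters. To begin, since $f$ is a weak equivalence in $\sStratC$ it induces an isomorphism $\pstr\cong\pstr[Y]$; identifying these with a single poset $\pos$, we may treat $f$ as a map over $\mathrm{id}_{\pos}$ and must show that $P(\ustr)\to P(\ustr[Y])$ is an isomorphism.

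The crux is that a stratified simplicial set all of whose strata are Kan complexes can be fibrantly replaced by attaching inner horn fillers alone. First note that under this hypothesis the functor $\tau(\ustr)\to\pos$ is conservative: a morphism of $\tau(\ustr)$ lying over an identity of $\pos$ is represented by a chain of edges of $\ustr$ whose vertices all lie in a single stratum (since $\pos$ is a poset), so it is the image of a morphism of the homotopy category of that stratum --- a groupoid, the stratum being a Kan complex --- and is therefore invertible. Now apply the small object argument for the set of inner stratified horn inclusions $\stratHorn\hookrightarrow\stratSim$ to obtain a relative cell complex $j_{\str}\colon\str\to\str^{f}$ with $\str^{f}$ injective with respect to all of these, functorially (so also $j_{\str[Y]}\colon\str[Y]\to\str[Y]^{f}$). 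Then: (i) $\ustr^{f}$ is a quasi-category, because every simplicial inner horn $\Lambda^n_k\to\ustr^{f}$ underlies a unique stratified map out of $\stratHorn$ (the structure map to $\pos$ being forced), and filling it is automatically compatible with that structure map, which on a simplex is determined by its vertices; (ii) $\str^{f}$ again has Kan strata: the stratum functors $(-)_p$ are cocontinuous, and restricting an inner stratified horn inclusion to the vertices over $p$ gives an identity or an inner horn inclusion, so applying $(-)_p$ to $j_{\str}$ shows $\str_p\to\str^{f}_p$ is inner anodyne, while $\str^{f}_p$, being the fibre of $\ustr^{f}$ over $p$, is itself a quasi-category --- hence Joyal-equivalent to the Kan complex $\str_p$, with groupoidal homotopy category, so a Kan complex; (iii) consequently, by the observation above, $\ustr^{f}\to\pos$ is conservative, so $\str^{f}$ is an abstract stratified homotopy type and thus bifibrant in $\sStratC$ by (the proof of) \cref{prop:sStraC_pres_Astrat}; (iv) $j_{\str}$ is an acyclic cofibration in $\sStratC$, being built out of the localizing maps of \cref{prop:c_is_left_bous_of_d}; and (v) $P(j_{\str})$ is an isomorphism, since each inner stratified horn inclusion underlies an inner anodyne map of simplicial sets --- sent by $P$ to an isomorphism by \cref{lem:pos_of_cat_eq} --- and $P$ preserves colimits.

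By two-out-of-three, $f^{f}\colon\str^{f}\to\str[Y]^{f}$ (with $f^{f}j_{\str}=j_{\str[Y]}f$) is a weak equivalence, and it is one between bifibrant objects of the simplicial model category $\sStratC$, hence a simplicial homotopy equivalence. Restricting the witnessing $\Delta^1$-homotopies to underlying simplicial sets yields natural transformations between the functors of quasi-categories $\ustr^{f}\to\ustr[Y]^{f}$ and back whose components are edges lying over identities of $\pos$; by conservativity of $\ustr^{f}\to\pos$ and $\ustr[Y]^{f}\to\pos$ these components are equivalences, so $\ustr^{f}\to\ustr[Y]^{f}$ is an equivalence of quasi-categories, and \cref{lem:pos_of_cat_eq} gives $P(\ustr^{f})\cong P(\ustr[Y]^{f})$. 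Therefore $P(f)=P(j_{\str[Y]})^{-1}\circ P(f^{f})\circ P(j_{\str})$ is an isomorphism.

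I expect the main obstacle to be point (ii): that once the strata are Kan complexes, inner horns alone suffice for fibrant replacement. This is exactly where the hypothesis is used, and it fails in general --- an arbitrary stratified simplicial set genuinely needs outer admissible horn fillers, which can change $P$ --- so the verification must rest on the identification of Kan complexes among quasi-categories via their groupoidal homotopy category, together with the cocontinuity of the stratum functors. A secondary subtlety is the passage, in the last step, from a simplicial homotopy equivalence of bifibrant stratified objects to a genuine \emph{categorical} equivalence of the underlying simplicial sets: this uses conservativity of the structure maps essentially, since without it one obtains only a weak homotopy equivalence, which $P$ does not preserve (as $\Delta^1\to\Delta^0$ shows).
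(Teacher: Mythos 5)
Your proof is correct, and it shares the paper's key insight but executes the reduction quite differently. Both arguments turn on the observation that the Kan-strata hypothesis forces the underlying map of simplicial sets to be a genuine categorical equivalence, after which \cref{lem:pos_of_cat_eq} finishes the job. The paper gets there abstractly: it exhibits $\sSetPC$ as a left Bousfield localization of the slice Joyal model structure (over $\pos$) at the inclusions $\Delta^{[p\leq p]} \hookrightarrow \Delta^{[p\leq p\leq p]}\cup_{\Delta^{\{0,2\}}}\Delta^{[p]}$, observes that Kan strata make $\str$ and $\str[Y]$ local, and invokes the general principle that a localized weak equivalence between local objects is a weak equivalence before localizing, i.e. a categorical equivalence. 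You instead build an explicit fibrant replacement by the small object argument using only inner stratified horns, carefully check that this replacement preserves Kan strata (via cocontinuity of the stratum functors and the observation that inner stratified horn inclusions restrict on each stratum to identities or inner horn inclusions), deduce conservativity and hence bifibrancy, and track $P$ through the replacement. Your route is more hands-on and in some sense makes explicit a point the paper's proof treats tersely (namely that the locality of $\str$ is verified at a fibrant replacement, which must still have Kan strata for the argument to close); the paper's route is shorter but leans on the cited nLab proposition. One small remark: in the final paragraph, rather than extracting natural transformations from the simplicial homotopies and checking componentwise invertibility by hand, you could directly cite the same local Whitehead principle the paper uses --- a weak equivalence between bifibrant objects of $\sSetPC$ over the identity on $\pos$ is already a weak equivalence in the unlocalized slice Joyal structure, hence a categorical equivalence on underlying simplicial sets --- which would shorten the closing step considerably.
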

\begin{proof}
Without loss of generality, we may assume that $f$ is the identity on posets $\pstr[X] = \pstr[Y] = \pos$, i.e., that $f$ is a weak equivalence in $\sSetPC$. $\sSetPC$ is equivalently constructed by localizing the model structure on the overcategory $\sSet_{/P}$, coming from the Joyal model structure on $\sSet$, at inclusions $\Delta^{[p \leq p]} \hookrightarrow \Delta^{[p \leq p \leq p]} \cup_{\Delta^{\{0,2\}}} \Delta^{[p]}$, for $p \in \pos$. Indeed, being local with respect to these inclusions precisely means that every morphism in the fibers is an isomorphism, that is, that $\ustr \to \pos$ induces a conservative functor of infinity categories (after fibrantly replacing $\ustr$). It follows that $\str$ and $ \str[Y]$ are local with respect to these inclusions. Hence, $f$ is a Joyal-Kan equivalence in $\sSetP$, if and only if the underlying simplicial map $\ustr \to \ustr[Y]$ is a categorical equivalence (\cite[Prop. 6.3]{nlab:left_bousfield_localization_of_model_categories}). 
Consequently, $f$ induces an equivalence of homotopy categories $\tau( \ustr ) \to \tau( \ustr[Y])$. It follows by construction of $P \colon \sSetN \to \Pos$ that the induced morphism $P(f)$ is an isomorphism.
\end{proof}
In particular, the right derived functors of $P \circ \forget \colon \sStratC, \sStratD \to \Pos$ agree and may be computed by only replacing strata by Kan complexes.
\begin{notation}
    We denote the right derived functor (with respect to $\sStratC$ or $\sStratD$) of the composition
    \[
    \sStrat \xrightarrow{\forget} \sSet \xrightarrow{P(-)} \Pos
    \]
    by $\rpstr[-]$. For $\str \in \sStrat$, we call $\rpstr$ the \define{refined poset} associated to $\str$.
\end{notation}
As an immediate corollary of \cref{prop:computing_der_poset_pre} we obtain:
\begin{corollary}\label{prop:computing_der_poset}
    Let $\str\in \sStrat$ such that all strata of $\str$ are Kan complexes.
    Then, the canonical map 
    \[ P(\ustr) \to \rpstr[\str],\]
    is an isomorphism.
\end{corollary}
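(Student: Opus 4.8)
The plan is to reduce the statement directly to \cref{prop:computing_der_poset_pre}, by unwinding what the right derived functor $\rpstr[-]$ computes. Pick a weak equivalence $j \colon \str \to \str[Y]$ in $\sStratC$ with $\str[Y]$ fibrant (it is harmless to work in $\sStratC$ rather than $\sStratD$, since by the remark preceding the notation the two right derived functors of $P \circ \forget$ agree). By the construction of the right derived functor we have $\rpstr = P(\ustr[Y])$, and the canonical map appearing in the statement is the map $P(\ustr) \to P(\ustr[Y])$ induced by $j$. So it suffices to show that this map is an isomorphism of posets.

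First I would record that every fibrant object of $\sStratC$ has Kan complexes as strata. Indeed, $\str[Y]$ being fibrant in $\sStratC$ means that $\ustr[Y] \to \pstr[Y]$ is fibrant in $\sSetPC$ with $P = \pstr[Y]$; and, as recalled in the proof of \cref{prop:computing_der_poset_pre}, this model structure is the localization of the Joyal slice model structure at the inclusions $\Delta^{[p \leq p]} \hookrightarrow \Delta^{[p \leq p \leq p]} \cup_{\Delta^{\{0,2\}}} \Delta^{[p]}$, so that a fibrant object is precisely a conservative inner fibration into $\pstr[Y]$. Its fibres, that is the strata of $\str[Y]$, are therefore Kan complexes. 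By hypothesis, $\str$ also has Kan complexes as strata.

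Consequently $j$ is a weak equivalence in $\sStratC$ between stratified simplicial sets all of whose strata are Kan complexes, so \cref{prop:computing_der_poset_pre} applies and shows that the induced morphism $P(\ustr) \to P(\ustr[Y]) = \rpstr$ --- which is exactly the canonical map $P(\ustr) \to \rpstr$ --- is an isomorphism. This is what we wanted. The only step carrying any content is the description of the fibrant objects of $\sStratC$ as conservative inner fibrations over a poset, and since this is already exploited in the proof of \cref{prop:computing_der_poset_pre} I do not anticipate a genuine obstacle: as the surrounding text indicates, the corollary is an immediate consequence of \cref{prop:computing_der_poset_pre}.
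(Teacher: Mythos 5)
Your proof is correct and is, as far as one can tell, exactly the argument the paper has in mind: the paper offers no explicit proof, but the surrounding text ("may be computed by only replacing strata by Kan complexes") indicates precisely the reduction you make, namely that fibrant objects of $\sStratCN$ are conservative inner fibrations over a poset and hence have Kan-complex strata, so that \cref{prop:computing_der_poset_pre} applies directly to the fibrant-replacement map $\str \to \str[Y]$. The one step worth making fully explicit — that conservativity over a poset forces each fiber, which is a priori only a quasi-category, to be a Kan complex because every morphism in it maps to an identity of the poset and is therefore an equivalence — you state correctly.
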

We obtain the following explicit description of $\rpstr$.
\begin{proposition}\label{prop:explicit_rp}
    Let $\str \in \sStratN$. Then the underlying set of $\rpstr$ is the set of path components of non-empty strata of $\str$. Furthermore, for any two such components
    $[x]$ and $[y]$, for $x,y \in \ustr$, there is a relation $x \leq y$, if and only if there is a path of $1$-simplices 
    \[
    x = x_0 \leftrightarrow x_1 \leftrightarrow x_2 \leftrightarrow \cdots \leftrightarrow x_n = y
    \] 
    where only simplices that are contained within a stratum of $\str$ are allowed to point in direction of $x$.
\end{proposition}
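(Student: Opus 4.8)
The plan is to identify $\rpstr$ with an explicitly described poset $Q(\str)$ built from the $1$-skeleton of $\str$, to verify this identification for objects all of whose strata are Kan complexes, and then to check that $Q$ is unchanged by a fibrant replacement. So let $Q(\str)$ be the poset whose underlying set is the set of path components of non-empty strata of $\str$, with $[x]\leq[y]$ precisely when there is a zigzag $x=x_0\leftrightarrow\cdots\leftrightarrow x_n=y$ of $1$-simplices of $\ustr$ in which a $1$-simplex may point towards $x$ only if it is contained in a single stratum of $\str$. The first point to check is that this really is a well-defined functorial poset. Reflexivity and transitivity are clear. For antisymmetry, note that along such a zigzag the value of $\sstr$ is non-decreasing from $x$ to $y$: a forward $1$-simplex $a\to b$ gives $\sstr(a)\leq\sstr(b)$, and a backward one is contained in a stratum, hence gives $\sstr(a)=\sstr(b)$. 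Thus $[x]\leq[y]\leq[x]$ forces $\sstr(x)=\sstr(y)$, and then the whole zigzag is constant on $\sstr$ and therefore lies in the stratum $\str_{\sstr(x)}$ --- here one uses that a $1$-simplex of $\ustr$ both of whose vertices lie in a stratum $\str_p$ lies in $\str_p$, since the nerve of a poset has a unique $1$-simplex on each ordered pair of vertices. So $x$ and $y$ lie in the same path component of a common stratum, i.e. $[x]=[y]$; conversely two vertices in a common stratum component are visibly identified, which also shows the relation descends to path components. Functoriality is immediate, as a stratified map carries strata into strata and preserves the two clauses. It remains to produce a natural isomorphism $\rpstr\cong Q(\str)$.

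Next I would treat the case where all strata of $\str$ are Kan complexes, where the claim becomes $Q(\str)\cong P(\ustr)$; together with \cref{prop:computing_der_poset} this gives $\rpstr\cong Q(\str)$ there. The key observation is that the set of vertices mutually reachable from a vertex $x$ by arrows of the category $\tau(\ustr)$ (the equivalence class of $x$ defining the underlying set of $P(\ustr)$, see \cref{con:posetification}) is exactly the path component of $\str_{\sstr(x)}$ containing $x$: if $x$ and $y$ are mutually reachable there are directed edge paths $x\to\cdots\to y$ and $y\to\cdots\to x$ in $\ustr$, along which $\sstr$ is non-decreasing, whence $\sstr(x)=\sstr(y)$ and both paths lie in that stratum; conversely if $x$ and $y$ lie in the same path component of the Kan complex $\str_p$ there is a single $1$-simplex $x\to y$ inside $\str_p$, which is invertible in $\tau(\str_p)$ and hence in $\tau(\ustr)$. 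By the description of $P$ in \cref{con:posetification}, the underlying set of $P(\ustr)$ is then again the set of path components of non-empty strata, matching $Q(\str)$. For the orders: a $1$-simplex $a\to b$ of $\ustr$ gives an arrow $a\to b$ in $\tau(\ustr)$, and a $1$-simplex $b\to a$ contained in a stratum, being invertible, gives an arrow $a\to b$ in $\tau(\ustr)$; composing along such a zigzag shows $[x]\leq[y]$ in $Q(\str)$ implies $[x]\leq[y]$ in $P(\ustr)$, while conversely a directed edge path representing an arrow witnessing $[x]\leq[y]$ in $P(\ustr)$ is, having no backward edges, already such a zigzag. So the two posets coincide.

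Finally, for general $\str$ I would take a fibrant replacement $\str\hookrightarrow\str^{f}$ in $\sStratD$, realised by the small object argument as a transfinite composite of pushouts of the admissible horn inclusions of \cref{cor:cof_gen_sStratD}. Since the horn inclusions with structure map constant at a point are admissible (they are images of ordinary horn inclusions under a left Quillen functor $u_!$, cf.\ \cref{prop:basechange}, so working inside the fibre over $\pstr$ a fibrant object fills them), all strata of $\str^{f}$ are Kan complexes; by the previous step, together with the fact that $\rpstr$ may be computed by any weak equivalence to an object with Kan strata (as around \cref{prop:computing_der_poset_pre}), $\rpstr\cong Q(\str^{f})$. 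It thus suffices to show $Q(\str)\cong Q(\str^{f})$, and since the set of path components of strata and the relation in $Q$ both commute with filtered colimits and pass to retracts, this reduces to checking that a single pushout of an admissible horn inclusion $\Lambda^n_k\hookrightarrow\Delta^n$ changes neither of them. For $n\geq 3$ this is immediate, since no simplices of dimension $\leq 1$ are added. For $n\leq 2$ it follows from a short case analysis using the admissibility conditions recalled in \cref{rec:Dou-Hen-mod}: these force the pivot edge of an outer horn to be constant on the poset, so the unique new $1$-simplex (and, when $n=1$, the new vertex) is either contained in a stratum or joins two vertices already related in $Q$, hence creates neither a new component nor a new relation. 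This low-dimensional bookkeeping, which genuinely relies on the precise form of the admissibility conditions, is the step I expect to require the most care; everything else is formal given \cref{prop:computing_der_poset}, \cref{con:posetification} and \cref{cor:cof_gen_sStratD}.
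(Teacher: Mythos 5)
The paper states \cref{prop:explicit_rp} without giving a proof, so there is nothing direct to compare against; it is presented as following from \cref{prop:computing_der_poset} and \cref{con:posetification}, and the construction immediately afterwards (showing $\rpstr = P(\forget(\Exi\str))$) suggests the intended reduction. Your argument is a correct and careful fleshing-out of what the paper leaves implicit, and the route is essentially the natural one: set up the candidate poset $Q(\str)$ and check it is well-defined; identify $Q(\str)$ with $P(\ustr)$ when all strata are Kan, using that a $1$-simplex within a Kan stratum is invertible in $\tau(\ustr)$ and that every morphism of $\tau(\ustr)$ is a directed edge path; and then reduce the general case by checking $Q$ is invariant under attaching cells for the generating acyclic cofibrations of $\sStratD$. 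The $n\leq 2$ case analysis is where the content lies, and your reasoning there is sound given the admissibility conditions for outer horns (the pivot edge $[0,1]$ resp.\ $[n-1,n]$ is constant in $[m]$, hence lands in a single stratum), together with the fact that in the $n=2$ inner case the new edge $[0,2]$ is either an already-implied forward relation or lies in a stratum in which all three vertices are already connected. Two minor remarks: the appeal to retracts is superfluous, since the small object argument yields the fibrant replacement directly as a transfinite composite of pushouts of coproducts of admissible horns, and a pushout of a coproduct decomposes into a transfinite composite of single pushouts; and the paper would most likely have run the last step through the explicit $\Exi$ replacement rather than a generic cellular fibrant replacement, though the two are interchangeable here given \cref{lem:refined_poset_pres_colim}.
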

The following lemma follows from the explicit description \cref{prop:explicit_rp}.
\begin{lemma}\label{lem:refined_poset_pres_colim}
    The functor $\rpstr[-] \colon \sStratN \to \Pos$ preserves filtered colimits. 
\end{lemma}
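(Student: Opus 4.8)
The plan is to argue entirely through the explicit description of $\rpstr$ given in \cref{prop:explicit_rp}, exploiting that every piece of data appearing there --- a vertex, a path component of a stratum, a zig-zag of finitely many $1$-simplices --- is \emph{finite}, hence unaffected by passage to a filtered colimit. First I would record how filtered colimits are computed in $\sStratN$: since $\sStratN \to \Pos$ is a Grothendieck bifibration with fibres $\sSetP$ over $\pos \in \Pos$ (\cref{def:non_ref_mod_struct}), and the base-change functors $u_!$ preserve colimits (they are left adjoints, \cref{prop:basechange}) and leave underlying simplicial sets unchanged, a filtered colimit $\str = \operatorname{colim}_i \str_i$ in $\sStratN$ has underlying poset $\pstr \cong \operatorname{colim}_i Q_i$ in $\Pos$ (where $Q_i$ denotes the underlying poset of $\str_i$), while its underlying simplicial set is the filtered colimit, in $\sSetN$, of the underlying simplicial sets of the $\str_i$. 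I would then recall the standard description of filtered colimits in $\Pos$: the underlying set is the filtered colimit of the underlying sets, and two elements are comparable in the colimit precisely when they admit representatives that are already comparable at some stage. It therefore suffices to show that the canonical comparison $\operatorname{colim}_i \rpstr[\str_i] \to \rpstr[\str]$ is bijective on underlying sets and reflects the order relation (it preserves it for free, being a morphism of posets), hence is an isomorphism.

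For the bijection on points I would fix $p \in \pstr$ and write $\str_p \subseteq \ustr$ for the $p$-stratum, that is, the sub-simplicial set of simplices whose label is constantly $p$. From $\pstr \cong \operatorname{colim}_i Q_i$ and the description of $\ustr$ above, one checks that $\str_p$ is the filtered colimit, over the filtered category of pairs $(i,q)$ with $q \in Q_i$ mapping to $p$ and the evident morphisms, of the strata $(\str_i)_q$. Since $\pi_0$ of simplicial sets --- being left adjoint to the functor that sends a set to the corresponding discrete simplicial set --- preserves filtered colimits, and since $\pi_0(\emptyset) = \emptyset$ absorbs strata that are empty at early stages, a reindexing identifies the underlying set $\coprod_{p \in \pstr}\pi_0(\str_p)$ of $\rpstr$ (\cref{prop:explicit_rp}) with $\operatorname{colim}_i \coprod_{q \in Q_i}\pi_0((\str_i)_q) = \operatorname{colim}_i \rpstr[\str_i]$.

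The remaining point --- the order relation, and the matching bookkeeping for injectivity on points --- is handled uniformly by finiteness, and this is the step where I expect the only genuine care to be needed. By \cref{prop:explicit_rp}, a relation $[x]\le[y]$ in $\rpstr$ is witnessed by a finite zig-zag of $1$-simplices from $x$ to $y$ in which only simplices lying within a single stratum may point backwards, and an identification $[x]=[y]$ by a finite path inside one stratum; each such witness is a finite configuration of simplices subject to finitely many constraints on their labels (prescribed labels must agree, prescribed $1$-simplices must point a prescribed way). Since $\pstr \cong \operatorname{colim}_i Q_i$ and the indexing category is filtered, any such configuration in $\str$ is already realised over some $\str_k$ --- one passes to an index large enough that all the finitely many required label identifications and comparabilities hold there --- and there it witnesses the corresponding relation, respectively identification, in $\rpstr[\str_k]$; conversely, any such witness over $\str_k$ maps forward to one over $\str$ along the structure maps. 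Together with the previous paragraph this shows that $\operatorname{colim}_i \rpstr[\str_i] \to \rpstr[\str]$ is a bijective, order-reflecting map of posets, hence an isomorphism, and \cref{lem:refined_poset_pres_colim} follows.
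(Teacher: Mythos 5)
Your proposal is correct and takes the same route as the paper, which simply asserts that the lemma "follows from the explicit description" in \cref{prop:explicit_rp}. You have fleshed out the details the paper leaves implicit — that filtered colimits in $\sStratN$ are computed on underlying posets and simplicial sets, that $\pi_0$ of strata commutes with filtered colimits, and that the finite zig-zag witnesses for relations in \cref{prop:explicit_rp} descend to and lift from a finite stage — and each of these steps is sound.
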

\begin{construction}
    For $\str \in \sStratN$ we denote by $\str^{\ared}$, its so-called \define{refinement}, is given by the canonical simplicial map $\ustr \to \rpstr$ that maps a vertex to the path component of its stratum (using the explicit construction of $\rpstr$ as in \cref{prop:explicit_rp}). This construction induces an idempotent functor 
    \[
    (-)^{\ared} \colon \sStrat \to \sStrat
    \]
    together with a natural transformation $\str^{\ared} \to \str$, given by 
    \begin{diagram}
        \ustr \arrow[r, "1_{\ustr}"] \arrow[d] & \ustr \arrow[d] \\
        \rpstr \arrow[r] & \pstr
    \end{diagram}
    where the lower map maps a path component to the stratum it is contained in. 
\end{construction}
\begin{definition}\label{def:refined}
    A stratified simplicial set $\str \in \sStratN$ is called \define{refined} if the natural stratified map $\str^{\ared} \to \str$ is an isomorphism.
\end{definition}
Being refined may be interpreted as being stratified in a way that uses the minimal poset (in the sense of minimal amounts of elements and relations) capable of reflecting the same stratified topology (see \cite{TSHHWa}, for topological characterizations). 
\begin{remark}
    Note that by \cref{prop:explicit_rp}, it follows that a stratified simplicial set $\str \in \sStratN$ is refined if and only if $\sstr \colon X \to \pstr$ does not have empty strata, and whenever there is a relation $\sstr(x) \leq \sstr(y)$, for $x,y \in \ustr ([0])$ there is a sequence
    \[
    x = x_0 \leftrightarrow x_1 \leftrightarrow x_2 \leftrightarrow \cdots \leftrightarrow x_n = y
    \]
    of $1$-simplices in $\ustr$, with $\sstr(x)=p$ and $\sstr(y)=q$, and such that only simplices that are contained in one stratum are allowed to point in the direction of $x$. In particular, all strata are path connected. 
\end{remark}
\begin{remark}\label{rem:0-connected_vs_ref}
    If $\str \in \Strat$ is fibrant in $\sStratC$, that is, given by a quasi-category $\ustr$ together with a conservative functor $\ustr \to \pstr$ then being refined is equivalent to being $0$-connected, in the sense of \cite[Def. 2.3.6]{Exodromy}. See also \cref{rem:0-connected_vs_ref_present}.
\end{remark}
One may now easily verify the following:
\begin{proposition}\label{prop:refinement_is_ra}
    The refinement functor $\str \mapsto \str^{\ared}$ has image in the full simplicial subcategory of refined stratified simplicial sets $\str^{\ared}$. It induces the right adjoint to the inclusion of refined stratified simplicial sets into all stratified simplicial sets. The counit of adjunction is given by the refinement morphisms $\str^\ared \to \str$.
\end{proposition}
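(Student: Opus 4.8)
The plan is to prove the three assertions in order: that $\str^{\ared}$ is always refined, that the refinement morphism $\str^{\ared}\to\str$ has a universal property among maps out of refined objects (which gives the adjunction and pins down its counit), and that the resulting adjunction is simplicial.

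\emph{That $\str^{\ared}$ is refined.} Fix $\str$ and write $q\colon\ustr\to\rpstr$ for the structure map of $\str^{\ared}$, which by \cref{con:posetification} and \cref{prop:explicit_rp} sends a vertex to the path component of its stratum in $\str$. Since the refinement morphism $(\str^{\ared})^{\ared}\to\str^{\ared}$ is the identity on underlying simplicial sets, it is enough to show that the induced map of posets $\rpstr[\str^{\ared}]\to\rpstr$ is an isomorphism. The main point is to identify the strata of $\str^{\ared}$ using \cref{prop:explicit_rp}: the stratum of $\str^{\ared}$ lying over $c\in\rpstr$ is $q^{-1}(c)$, and I would check that it is non-empty (it contains any vertex of the path component $c$) and path connected — any two of its vertices lie in a common path component of a single stratum of $\str$, hence are joined by a zigzag of $1$-simplices each lying inside that stratum, and each such $1$-simplex, being connected, is sent by $q$ to a degenerate edge at $c$ and so lies in $q^{-1}(c)$. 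Consequently the underlying set of $\rpstr[\str^{\ared}]$ is canonically that of $\rpstr$, and one checks the poset relations match: a stratum of $\str^{\ared}$ is contained in a stratum of $\str$, which gives one implication, while conversely a $1$-simplex lying in a stratum of $\str$ lies, being connected, in a single stratum of $\str^{\ared}$, which gives the other. Hence $\str^{\ared}$ is refined, so the refinement functor lands in the full simplicial subcategory spanned by the refined stratified simplicial sets.

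\emph{The universal property.} Let $\str[R]$ be refined with structure map $s\colon\ustr[R]\to\pstr[R]$, and let $f\colon\str[R]\to\str$ be a stratified map with underlying simplicial map $f_0\colon\ustr[R]\to\ustr$. Since $\str^{\ared}\to\str$ is the identity on underlying simplicial sets, a factorization of $f$ through it must have underlying map $f_0$, so it remains to produce a unique map of posets $\bar g\colon\pstr[R]\to\rpstr$ over $\pstr$ with $(f_0,\bar g)$ a stratified map $\str[R]\to\str^{\ared}$. I would set $\bar g(p)$ to be the $\str$-stratum path component of $f_0(v)$ for any vertex $v$ in the $p$-stratum of $\str[R]$; this is well defined and monotone precisely because $\str[R]$ is refined — so every stratum of $\str[R]$ is non-empty and path connected and every relation of $\pstr[R]$ is witnessed by a zigzag of the sort described in \cref{prop:explicit_rp} — together with the fact that $f_0$, being stratum preserving, carries such zigzags to zigzags of the same sort in $\str$. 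One then checks $(f_0,\bar g)$ is a stratified map whose composite with $\str^{\ared}\to\str$ is $f$. For uniqueness, the underlying map is forced and, since $\str[R]$ refined implies $s$ is surjective on vertices, the poset map of any factorization is determined by its underlying map. This produces a bijection $\sStrat(\str[R],\str^{\ared})\cong\sStrat(\str[R],\str)$, natural in the refined object $\str[R]$ and in $\str$ and given by postcomposition with the refinement morphism; as the refined objects span a full subcategory, this exhibits $(-)^{\ared}$ as right adjoint to the inclusion, and evaluating the bijection on the identity shows the counit is the family of refinement morphisms.

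\emph{Simpliciality, and the main obstacle.} To upgrade the adjunction to a simplicial one it suffices to note that $\str[R]\otimes\Delta^n$ is again refined whenever $\str[R]$ is — its strata are $\ustr[R]_p\times\Delta^n$, which are non-empty and path connected, and its structure map remains an isomorphism onto the refined poset $\pstr[R]$ — and then to apply the bijection above with $\str[R]\otimes\Delta^n$ in place of $\str[R]$; naturality in the copower variable makes these bijections into an isomorphism of mapping simplicial sets. I expect the only step requiring genuine care to be the first one, i.e.\ verifying that $(-)^{\ared}$ is idempotent; it rests on the explicit description in \cref{prop:explicit_rp} and on the way path components of $1$-simplices interact with the strata, whereas the remaining arguments are essentially formal once that is in place.
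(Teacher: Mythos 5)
Your proof is correct and supplies the verification that the paper leaves to the reader (the paper introduces the proposition with ``One may now easily verify the following'' and gives no argument). The route you take — establishing idempotency via \cref{prop:explicit_rp} by showing that the fibers $q^{-1}(c)$ are nonempty and path-connected and that the generating relations of $\rpstr[\str^{\ared}]$ and $\rpstr$ coincide, then constructing the mediating poset map $\bar g$ from the universal property and checking surjectivity of $s$ on vertices for uniqueness — is the natural direct argument, and the key observation you isolate (that any $1$-simplex contained in a stratum of $\str$ automatically lies within a single stratum of $\str^{\ared}$, since its two endpoints are path-connected in that stratum) is exactly what makes both the idempotency and the monotonicity of $\bar g$ go through. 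The simpliciality step, reducing to $\str[R]\otimes\Delta^n$ being refined, is also handled correctly.
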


By \cref{lem:refined_poset_pres_colim} we have:
\begin{lemma}\label{lem:red_pres_filt_colim}
    The functor $(-)^{\ared} \colon \sStrat \to \sStrat$ preserves filtered colimits. 
\end{lemma}
Let us begin by investigating how $(-)^{\ared}$ interacts with the model structures on $\sStrat$.
\begin{construction}
     We will make use of the stratified $\Exi$ functors of \cite[Def. 3.7]{douteauwaas2021} (these were referred to with a \quotes{\textit{naiv}} in \cite{douteauwaas2021}). 
     Denote by $\sd \colon \sSetN \to \sSetN$ the barycentric subdivision functor and by $\Ex$ its right-adjoint (see \cite{KanExInf}). These constructions are extended to stratified spaces as follows:
     For $\str \in \sStratN$, we denote by $\Ex \str$ the stratified simplicial set obtained by the left vertical in the pullback square
    \begin{diagram}
        \forget ({\Ex \str}) \arrow[r] \arrow[d] & \Ex \ustr \arrow[d]  \\
        \pstr \arrow[r, hook] & \Ex \pstr \spaceperiod
    \end{diagram}
    This construction induces a right adjoint to the stratified subdivision functor $\str \mapsto (\sd \ustr \to \ustr \to \pstr)$ .
    There is a natural inclusion $\str \hookrightarrow \Ex \str$ adjoint to the stratified last vertex map $\sd \str \to \str$.
    We denote by $\Exi \str$ the colimit of the diagram \[ \str \hookrightarrow \Ex \str \hookrightarrow \Ex^2\str \hookrightarrow \cdots \spaceperiod\]
    One may easily verify that $\Exi$ is compatible with taking strata, in the sense that $(\Exi \str)_p = \Exi(\ustr_p)$, for $p \in \pstr$.
    It follows from the classical results of \cite{KanExInf} that $\Exi \str$ has strata given by Kan complexes.
    We have shown in \cite[Prop. 3.9]{douteauwaas2021} that the natural inclusion $\str \hookrightarrow \Exi \str$ is an acyclic cofibration in $\sStratD$. In particular, we can compute \[ \rpstr = P ( \forget({\Exi \str} )), \] for all $\str \in \sStrat$.
\end{construction}
\begin{proposition}\label{prop:ared_pres_things}
    The functors
    \begin{align*}
        (-)^{\ared} \colon \sStratD &\to \sStratD ; \\
          (-)^{\ared} \colon \sStratC &\to \sStratC 
    \end{align*}
    preserve cofibrations, acyclic fibrations, and acyclic cofibrations. In particular, they preserve weak equivalences.
    Furthermore, a cofibration $j$ that induces an isomorphism on posets is acyclic if and only if $j^{\ared}$ is an acyclic cofibration.
\end{proposition}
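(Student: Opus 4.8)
The plan is to reduce everything to the fiberwise situation over a fixed poset and to the behavior of the posetification functor $P$, using the explicit description of $\rpstr$ and of the refinement morphism. First I would treat cofibrations: a stratified map $f\colon \str\to\str[Y]$ is a cofibration in $\sStratD$ (equivalently $\sStratC$, since these have the same cofibrations by \cref{prop:c_is_left_bous_of_d}) iff $\baseForw$ is a cofibration in $\sSetP[\pstr[Y]]$, i.e.\ a monomorphism of simplicial sets that is also ``stratum creating'' in the appropriate sense. Applying $(-)^{\ared}$ changes the target poset to $\rpstr[Y]$ and the map on underlying simplicial sets is unchanged ($1_{\ustr}$ on each side), so $f^{\ared}$ is again a monomorphism on underlying simplicial sets; what must be checked is that no two vertices of $\str$ lying in distinct strata of $\str$ can be identified in $\rpstr[Y]$, but this is exactly the explicit description in \cref{prop:explicit_rp} together with the fact that $f$ is stratified, so this is routine. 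For acyclic fibrations I would invoke \cref{rem:ac_fib_sStrat}: these are the stratified maps inducing an isomorphism on posets and a (Joyal/Kan) acyclic fibration on underlying simplicial sets. If $f$ induces an isomorphism on posets then it induces an isomorphism $P(\ustr)\cong P(\ustr[Y])$ by functoriality of the $1$-categorical $P$, and $\Exi$-replacement (which does not change $\rpstr$) plus \cref{prop:computing_der_poset_pre} identifies this with an isomorphism $\rpstr[\str]\cong\rpstr[Y]$; the underlying simplicial map is unchanged, so $f^{\ared}$ is again an acyclic fibration.

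Next I would handle acyclic cofibrations. Since $(-)^{\ared}$ preserves cofibrations, it suffices to show it preserves weak equivalences, and then an acyclic cofibration goes to a cofibration that is a weak equivalence. To see weak equivalences are preserved: given a weak equivalence $f\colon\str\to\str[Y]$ in $\sStratC$ (the argument for $\sStratD$ is the same, using $\sStratC\hookleftarrow\sStratD$ has the same underlying-set behaviour and the derived $P$ agree), it induces an isomorphism on posets, hence by the previous paragraph an isomorphism $\rpstr[\str]\cong\rpstr[Y]$, and the underlying map $\ustr\to\ustr[Y]$ is a weak equivalence in the Joyal (resp.\ Kan) model structure on $\sSet$ — indeed $f$ being a weak equivalence in $\sSetP[\pos]$ for $\pos=\pstr[X]=\pstr[Y]$ implies, after the fibrant-replacement argument of \cref{prop:computing_der_poset_pre}, that $\ustr\to\ustr[Y]$ is a categorical (resp.\ homotopy) equivalence. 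But $\str^{\ared}\to\str[Y]^{\ared}$ is precisely the stratified map with this underlying simplicial map and the isomorphism on refined posets, so it is a weak equivalence in $\sStratC$ (resp.\ $\sStratD$) by the characterization in \cref{lem:gluing_model_struct}, once we reduce to identity-on-posets via the isomorphism $\rpstr[\str]\cong\rpstr[Y]$.

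For the final ``furthermore'' clause: let $j$ be a cofibration inducing an isomorphism on posets. If $j$ is acyclic then $j^{\ared}$ is an acyclic cofibration by the part already proved. Conversely, suppose $j^{\ared}$ is an acyclic cofibration. By \cref{prop:computing_der_poset} (replacing strata by Kan complexes via $\Exi$, which does not affect $\rpstr$ and is an acyclic cofibration, hence does not affect whether $j$ is a weak equivalence), we may assume all strata of the source and target of $j$ are Kan complexes, so that $\str^{\ared}$ and $\str[Y]^{\ared}$ have underlying simplicial set $\ustr$ and $\ustr[Y]$ with poset $P(\ustr)$, $P(\ustr[Y])$. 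Then $j^{\ared}$ being a weak equivalence forces $\ustr\to\ustr[Y]$ to be a categorical (resp.\ homotopy) equivalence, and since $j$ already induces an isomorphism on the original posets, the characterization in \cref{lem:gluing_model_struct} gives that $j$ is a weak equivalence.

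The main obstacle I expect is the bookkeeping needed to pass cleanly between ``$f$ induces an isomorphism on $\pstr$'' and ``$f^{\ared}$ induces an isomorphism on $\rpstr$'', and in particular making rigorous the reduction (in the converse direction of the last clause) that lets one read off a weak equivalence of underlying simplicial sets from $j^{\ared}$ being acyclic; this is where \cref{prop:computing_der_poset_pre} and its proof (the Bousfield-localization comparison showing a fiberwise weak equivalence is detected on underlying simplicial sets once strata are $\infty$-categorical) do the real work, and one must be careful that $\Exi$-replacement is compatible with $(-)^{\ared}$ in the needed sense, which follows from the fact that $\Exi$ is computed stratum-wise and $\rpstr=P(\forget(\Exi\str))$.
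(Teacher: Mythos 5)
Your treatment of acyclic fibrations is essentially the same as the paper's. Your treatment of cofibrations is correct but overcomplicated: cofibrations in $\sStratD$ and $\sStratC$ are simply the stratified maps whose underlying simplicial map is a monomorphism, and $(-)^{\ared}$ leaves the underlying simplicial map unchanged, so $(-)^{\ared}$ creates cofibrations; there is nothing about ``stratum creating'' to check.

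The genuine gap is in your treatment of weak equivalences and the ``furthermore'' clause in the diagrammatic case. Your argument rests on the claim that, after passing to Kan-complex strata via $\Exi$, a map over a fixed poset is a weak equivalence in $\sSetPD$ (resp.\ $\sSetPC$) if and only if its underlying simplicial map is a weak homotopy equivalence (resp.\ categorical equivalence), and then reads off that $f^{\ared}$ over $Q=\rpstr$ must be a weak equivalence because it has the same underlying map. This is valid in the categorical case, because $\sSetPC[Q]$ is a left Bousfield localization of the slice Joyal model structure and stratified simplicial sets with Kan-complex strata are local, as used in the proof of \cref{prop:computing_der_poset_pre}. But it fails in the diagrammatic case: Douteau-Henriques weak equivalences over $Q$ are not detected by the underlying simplicial map. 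For instance, over $[1]$ one can map the cone on $S^1$, stratified by cone point and complement, onto $\Delta^{[0<1]}$ by collapsing; this is a weak homotopy equivalence of underlying simplicial sets (both contractible) and an isomorphism on refined posets, but it is not a diagrammatic equivalence since it kills $\pi_1$ of the homotopy link $\Hol_{[01]}$. So the implication ``underlying weak homotopy equivalence $+$ iso on refined posets $\Rightarrow$ Douteau equivalence over $Q$'' that your argument invokes (via ``the characterization in \cref{lem:gluing_model_struct}'') is simply false, and $\sSetPD$ cannot be exhibited as a left Bousfield localization of the slice Kan model structure to make it true.

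The paper avoids this by proving a stronger and structurally different statement: for the conservative map $Q=\rpstr[B]\to\pos$, the isomorphism of simplicial categories $(\sSetP)_{/\str[Q]}\cong\sSetP[Q]$ is an isomorphism of \emph{model} categories (for either fiberwise model structure). Because cofibrations on both sides are monomorphisms, this reduces to matching fibrant objects, which the paper does by a lifting-diagram argument exploiting that $Q\to\pos$ has discrete fibers. This slice-category comparison then gives the biconditional ``$j$ acyclic $\iff j^{\ared}$ acyclic'' for both the diagrammatic and categorical structures simultaneously, with no need to characterize weak equivalences through the underlying simplicial map. If you want to repair your approach for the diagrammatic case without the slice-category comparison, you would instead need to argue directly at the level of homotopy links, using the coproduct decomposition $\Hol_{\mathcal I}(\str)\cong\bigsqcup_{\mathcal I'\mapsto\mathcal I}\Hol_{\mathcal I'}(\str^{\ared})$ (as in the proof of \cref{prop:names_make_sense}); the underlying-simplicial-set shortcut does not work there.
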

\begin{proof}
    To see the statement concerning acyclic fibrations, note that both model categories we are concerned with have the same acyclic fibrations, and by \cref{rem:ac_fib_sStrat} these are precisely given by such morphisms which induce isomorphisms on posets, and acyclic fibrations in the Joyal model structure on the underlying simplicial sets. Hence, we only need to show that for an acyclic fibration $\str \to \str[Y]$, without loss of generality over the same poset $\pos$, the induced map $\rpstr \to \rpstr[Y]$ is an isomorphism. Now, just as in the classical scenario \cite{KanExInf}, one may show that the functor $\Exi \colon \sStratN \to \sStratN$ preserves acyclic fibrations. Therefore, we may assume without loss of generality that $\str$ and $\str[Y]$ have strata given by Kan complexes, and hence that $\rpstr = P (\ustr)$ and $\rpstr[Y] = P (\ustr[Y])$. As any acyclic fibration is a categorical equivalence in $\sSetN$, it follows that $f$ induces an isomorphism $P(\ustr) = \rpstr \to \rpstr[Y] = P (\ustr[Y])$, as was to be shown. 
    It remains to show the statement on acyclic cofibrations. Clearly, $(-)^{\ared}$ creates cofibrations, as these are defined only in terms of the underlying simplicial sets. Thus, it suffices to show that a cofibration $j \colon \str[A] \hookrightarrow \str[B]$ that is an isomorphism on posets is a weak equivalence, if and only if $j^{\ared}$ is a weak equivalence. Without loss of generality, we may assume that $j$ is the identity on posets. Furthermore, since $(-)^{\ared}$ is given by $\rpstr[-]$ on the posets level, which is a derived functor, it follows that $j^{\ared}$ also is given by an isomorphism on posets. Hence, we also assume that $j^{\ared}$ is given by the identity on the latter. Denote by $Q = \rpstr[B]$ and by $\mathcal Q$ the stratified simplicial set given by $Q \to \pos$. We may thus consider $j$ as an object of the slice category $(\sSetP)_{/ \str[Q]}$. We may then instead show the following stronger claim: The isomorphism of simplicial categories
    \begin{align*}
        (\sSetP)_{/\mathcal Q} \to \sSetP[Q]
    \end{align*}
    is an isomorphism of model categories, where on the left-hand side we use the slice model structure (with respect to $\sSetPD$ or $\sSetPC$).\\ 
    Note that as this is an isomorphism of simplicial categories, and the cofibrations in all categories involved are given by monomorphisms, it suffices to show that the isomorphism identifies the classes of fibrant objects.
    On the left-hand side, these are given by fibrations $\str \to \str[Q]$ (respectively in $\sSetPD$ and $\sSetPC$). By \cref{prop:explicit_rp}, the map $f \colon Q \to \pos$ has fibers which contain no relations, but the identity. In other words, the functor $f\colon Q \to \pos$ is conservative. It follows from this (using \cite[Prop. 2.4.1.5]{HigherTopos}), that $f$ has the right lifting property with respect to all inner and admissible horn inclusions. Hence, $\str[Q]$ is a fibrant object of $\sSetPD$ and of $\sSetPC$. Consequently, we only need to show that $\str$ being fibrant implies $\str \to \str[Q]$ being a fibration, in both scenarios. Since $\str[Q]$ is fibrant in both scenarios, fibrancy of $\str \to \str[Q]$ can be checked by having the right lifting property with respect to admissible, and inner and admissible horn inclusions, respectively.
    Now, consider a lifting diagram 
    \begin{diagram}
        \Lambda^\J_k \arrow[dd,hook] \arrow[r]& \str \arrow[d]\\
                                    & \str[Q] \arrow[d]\\
        \Delta^\J  \arrow[ru]  \arrow[r]             \arrow[ruu, dashed]           & P
    \end{diagram}       
    where the solid part of the diagram commutes, and the dashed map makes the upper and lower triangle of the outer rectangle diagram commute. Furthermore, assume that the left vertical horn inclusion is either admissible or inner.
    To finish the proof, it suffices to prove that this also implies that the middle triangle commutes.
    Since $\ustr[Q]$ is a simplicial complex, it suffices to verify commutativity on vertices. 
    If a vertex $x \in \Delta^\J$ lies in $\Lambda^\J_k$, then, by commutativity of the upper left triangle, there is nothing to show. Hence, we may restrict to the case where $\Lambda^\J_k$ is admissible and $\Delta^\J$ of dimension $1$, i.e. $\J = [ p \leq p ]$ and $k =0$ or $k=1$.
    Then, however, we may without loss of generality assume that $P = \{p\}$ is a singleton. Since $f \colon Q \to P$ has discrete fibers, this means that $Q$ is discrete. In this case, commutativity of the middle triangle follows immediately from commutativity of the upper left triangle, using path connectedness of $\Delta^\J$.    
\end{proof}
We may now use the refinement functor to obtain model structures which will take care of the pathologies we explain in \cref{rem:so_many_empty_spaces}. The model structure derived from the Joyal-Kan model structure on $\sStrat$ will allow us to think of stratified spaces as fully faithfully embedded into $\infty$-categories (\cref{prop:Quillen_Equ_betw_ref_and_ord}).
We now define model categories presenting homotopy theories of (certain) refined stratified simplicial sets. These are constructed by forcing $\str^{\ared} \to \str$ to be a weak equivalence, and hence turn out to be right Bousfield localizations (and thus coreflective localizations). 
\begin{theorem}\label{prop:ex_red_struct}
    Let $S$ be the class of refinement morphisms $\{ \str^{\ared} \to \str \mid \str \in \sStrat \}$. Then the right Bousfield localization of $\sStratD$ ($\sStratC$) at $S$ exists and is again combinatorial and simplicial. Its defining classes can be characterized as follows: 
    \begin{LocEx}
        \item \label{thm:loc_ex_cof}The cofibrations are generated by the set of stratified boundary inclusions $ \partial \Delta^{[n]} \hookrightarrow \Delta^{[n]}$
        together with the boundary inclusion $\stratBound[1] \hookrightarrow \Delta^1$ into the trivially stratified simplex.
        
        Equivalently, cofibrations are precisely those morphisms $j\colon \str[A] \to \str[B]$ that induce a monomorphism on simplicial sets (i.e. are a cofibration in $\sStratD$ or $\sStratC$) and are furthermore such that the diagram 
        \begin{diagram}
       \str[A]^{\ared} \arrow[r] \arrow[d] & \str[B]^{\ared} \arrow[d] \\
       \str[A] \arrow[r] & \str[B]
        \end{diagram}
         is pushout. In particular, the cofibrant objects are precisely the refined stratified simplicial sets.       
        \item \label{thm:loc_ex_weak} Weak equivalences are precisely those morphisms $f \colon \str \to \str[Y]$ for which $f^{\ared}$ is a weak equivalence in $\sStratD$ ($\sStratC$).
        \item  \label{thm:loc_ex_af} Acyclic fibrations are precisely those morphisms $f \colon \str \to \str[Y]$, for which $f^{\ared}$ is an acyclic fibration in $\sStratD$ (or equivalently in $\sStratC$). 
        In other words, $f$ induces an isomorphism on refined posets and an acyclic fibration on the underlying simplicial sets. 
        \item Fibrations and acyclic cofibrations are the same as in $\sStratD$ ($\sStratC$).
    \end{LocEx}
\end{theorem}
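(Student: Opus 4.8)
The plan is to build both model structures directly, via the recognition principle for combinatorial model categories (J. Smith; see \cite{BarwickLeftRight}), rather than by appealing to a general existence theorem for right Bousfield localizations (whose usual hypotheses include right properness, which we do not verify, and an explicit set of acyclic generators, unavailable for $\sStratC$). Concretely, I would take as generating cofibrations the set $I'$ displayed in the statement — the stratified boundary inclusions $\partial\Delta^{[n]}\hookrightarrow\Delta^{[n]}$ together with $\stratBound[1]\hookrightarrow\Delta^1$ onto the trivially stratified edge — and as weak equivalences the class $\mathcal W_S := \{\, f \mid f^{\ared}\ \text{is a weak equivalence in}\ \sStratD\ (\text{resp.}\ \sStratC)\,\}$. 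Once this is shown to be a (combinatorial, simplicial) model structure it will follow that it is the right Bousfield localization at $S$: its fibrations coincide with those of $\sStratD$ ($\sStratC$), its weak equivalences contain them, each refinement morphism $\str^{\ared}\to\str$ lies in $\mathcal W_S$ (applying $(-)^{\ared}$ to a refinement morphism yields an isomorphism, by idempotency of $(-)^{\ared}$), and one checks that $\mathcal W_S$ is exactly the class of $S$-colocal equivalences, the $S$-colocal objects being — up to $\sStratD$-equivalence — the refined ones with the refinement morphism serving as cofibrant replacement. The main tools throughout will be \cref{prop:ared_pres_things}, \cref{prop:explicit_rp}, \cref{lem:red_pres_filt_colim}, and combinatoriality from \cref{prop:cof_comb}.

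Checking the hypotheses of the recognition principle splits into four points. That $\mathcal W_S$ has the two-out-of-three property and is closed under retracts is immediate from functoriality of $(-)^{\ared}$. That $\mathcal W_S$ is an accessible, accessibly embedded subcategory of the arrow category $\sStrat^{[1]}$ follows because the weak equivalences of a combinatorial model category have this property, $(-)^{\ared}$ preserves filtered colimits by \cref{lem:red_pres_filt_colim} (hence is accessible), and $\mathcal W_S$ is the pullback of the weak equivalences of $\sStratD$ ($\sStratC$) along $(-)^{\ared}\colon\sStrat^{[1]}\to\sStrat^{[1]}$. The remaining two hypotheses — $I'\text{-inj}\subseteq\mathcal W_S$ and closure of $\mathcal W_S\cap I'\text{-cof}$ under pushout and transfinite composition — I would establish hand-in-hand with the explicit characterizations in the theorem.

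For point \cref{thm:loc_ex_af} I would identify $I'\text{-inj}$ with the class of maps $f$ such that $f^{\ared}$ is an acyclic fibration in $\sStratD$ — by \cref{rem:ac_fib_sStrat} these are exactly the maps inducing an isomorphism on refined posets and an acyclic fibration on underlying simplicial sets — which in particular lies in $\mathcal W_S$. One inclusion is a direct verification; for the other, lifting against the $\partial\Delta^{[n]}\hookrightarrow\Delta^{[n]}$ governs the underlying simplicial sets (each $\Delta^{[n]}$, and each $\partial\Delta^{[n]}$ with $n\neq 1$, is refined, so such lifting problems are adjoint, via the inclusion of refined objects and its right adjoint $(-)^{\ared}$, to the corresponding problems for $f^{\ared}$; the base-change adjunctions of \cref{prop:basechange} let one reduce to fixed posets), while lifting against $\stratBound[1]\hookrightarrow\Delta^1$ and $\emptyset\hookrightarrow\Delta^{[0]}$, combined with the explicit description of $\rpstr$ in \cref{prop:explicit_rp}, forces the map on refined posets to be an isomorphism. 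I expect this last identification — in particular, that lifting against the trivially stratified edge detects the a priori global condition of being an isomorphism on refined posets — to be the principal technical obstacle.

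It remains to handle the cofibrations (point \cref{thm:loc_ex_cof}) and the coincidence of fibrations and acyclic cofibrations with those of $\sStratD$ ($\sStratC$). The two descriptions of cofibrations agree because the refinement morphism $\str^{\ared}\to\str$ is the identity on underlying simplicial sets; hence for a monomorphism $j\colon\str[A]\to\str[B]$ the square $\str[A]^{\ared}\to\str[B]^{\ared}$ over $j$ is a pushout in $\sStrat$ precisely when $\rpstr[{\str[A]}]\to\rpstr[{\str[B]}]$, $\pstr[{\str[A]}]\to\pstr[{\str[B]}]$ is a pushout in $\Pos$. One checks this for each generator in $I'$ (a short poset computation), notes that monomorphisms with this property form a saturated class — so $I'\text{-cof}$ is contained in it — and conversely shows that any such monomorphism is a relative $I'$-cell complex by a skeletal induction attaching nondegenerate stratified simplices, with the trivially stratified edge $\stratBound[1]\hookrightarrow\Delta^1$ accounting for exactly the identifications of path components of strata produced by refinement; this induction is the second place where care is needed. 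In particular $\emptyset\to\str$ is a cofibration iff $\str$ is refined, giving the description of the cofibrant objects. Finally, if $j\in\mathcal W_S\cap I'\text{-cof}$, then $j^{\ared}$ being a weak equivalence forces $\rpstr[{\str[A]}]\cong\rpstr[{\str[B]}]$, whence the pushout-square condition forces $\pstr[{\str[A]}]\cong\pstr[{\str[B]}]$, so $j$ is an isomorphism on posets; then $j^{\ared}$ is an acyclic cofibration in $\sStratD$ ($\sStratC$) and the last clause of \cref{prop:ared_pres_things} shows $j$ is already acyclic there. Conversely, an acyclic cofibration of $\sStratD$ ($\sStratC$) is an isomorphism on posets, hence on refined posets ($\rpstr$ being a derived functor), hence a monomorphism satisfying the pushout-square condition, so it lies in $I'\text{-cof}$, and its refinement is a weak equivalence by \cref{prop:ared_pres_things}. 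Thus $\mathcal W_S\cap I'\text{-cof}$ equals the class of acyclic cofibrations of $\sStratD$ ($\sStratC$), which yields the required closure property and identifies the acyclic cofibrations and hence the fibrations. Combinatoriality is part of the recognition principle; simpliciality follows since the resulting structure has the same fibrations as the simplicial model category $\sStratD$ ($\sStratC$) and its generating cofibrations form a subclass of the latter's, so the pushout-product axiom is inherited.
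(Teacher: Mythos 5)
Your overall strategy — prove the existence via Smith's recognition principle, with the generating set $I'$ and the weak equivalence class $\mathcal W_S = \{f : f^{\ared}\ \text{w.e. in } \sStratD\}$, then identify the resulting structure with the right Bousfield localization — is a legitimate alternative framing. The paper's own proof does not invoke Smith; it instead directly exhibits the two weak factorization systems $(\mathrm{cof}(I),\mathrm{inj}(I))$ (via the small object argument) and $(AC,F)$ (borrowed from $\sStratD$/$\sStratC$), then reduces existence to the two assertions $\mathrm{inj}(I)=F\cap W$ and $\mathrm{cof}(I)\cap W = AC$. Either route ends up requiring the same three technical lemmas: characterizing $\mathrm{inj}(I)$, characterizing $\mathrm{cof}(I)$ via the pushout-square condition, and computing $\mathrm{cof}(I)\cap W$. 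Your accessibility argument for $\mathcal W_S$ via \cref{lem:red_pres_filt_colim} is correct and fills in the hypothesis of Smith that the paper's direct WFS approach silently avoids; this is one place where the two arguments genuinely diverge. For the converse direction of the cofibration characterization, the paper shows that a monomorphism satisfying the pushout-square condition has the left lifting property against $\mathrm{inj}(I)$ by a direct diagram chase (\cref{lem:char_cofib}), whereas you propose a skeletal induction exhibiting it as a relative $I'$-cell complex; this should be doable, but the paper's lifting argument is shorter and avoids the bookkeeping of attaching maps over shifting posets.

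There is, however, a genuine gap in the simpliciality argument. You claim ``the resulting structure has the same fibrations as $\sStratD$ and its generating cofibrations form a subclass, so the pushout-product axiom is inherited.'' This is false for a right Bousfield localization: precisely because the cofibrations of the localization are a \emph{proper} subclass of those in $\sStratD$, knowing that the pushout product $i\,\square\, j$ (for $i$ a new cofibration and $j$ a monomorphism of simplicial sets) lands among the $\sStratD$-cofibrations does \emph{not} tell you it lands among the new cofibrations. That is the part of SM7 that actually requires work. The acyclic cases are indeed inherited, since acyclic cofibrations coincide with those of $\sStratD$. But for the non-acyclic case one must verify that $i\,\square\, j$ still satisfies the pushout-square condition on (refined) posets. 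The paper devotes a substantial portion of the proof to exactly this: it reduces to $\str[A]=\emptyset$ or $A=\emptyset$ using the two-out-of-three-style stability of $\mathrm{cof}(I)$ under composition, and then checks by hand that the induced square on posets and refined posets is a pushout, using that $-\otimes K$ acts as $-\times\pi_0(K)$ on both. Your argument, as stated, skips this entirely and would not survive scrutiny.
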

\begin{proof}
    We denote by $I$ the set of generating cofibrations described in \ref{thm:loc_ex_cof} and by $W$ the class of weak equivalences described in \ref{thm:loc_ex_weak}. Furthermore, we denote by $\mathrm{inj}(I)$ the class of morphisms that have the right lifting property with respect to $I$, and denote $\mathrm{cof}(I)$ the class of morphisms that have the left lifting property with respect to $\mathrm{inj}(I)$. Finally, denote by $AC$ and $F$ the classes of acyclic cofibrations and fibrations in $\sStratD$ ($\sStratC$).
    To prove the existence of the localization above, it suffices to show the following:
    \begin{LocExClaim}
        \item \label{thm:loc_ex_claim} $\mathrm{inj}(I)$ is precisely the class of morphisms described in \ref{thm:loc_ex_af}.
        \item \label{thm:loc_ex_claim_2}$\mathrm{cof}(I) \cap W = AC$.
    \end{LocExClaim}
    To see this, note first that it follows from the small object argument that $\mathrm{cof}(I)$ and $\mathrm{inj}(I)$ form a weak factorization system. $AC$ and $F$ form a weak factorization system, by the respective property of $\sStratD$ ($\sStratC$). Hence, it only remains to show $F \cap W = \mathrm{inj}(I)$. That $\mathrm{inj}(I) \subset W$ follows by the characterization in \ref{thm:loc_ex_af}. That $\mathrm{inj}(I) \subset F$ follows from $F = \mathrm{inj}(AC)$ and $AC \subset \mathrm{cof}(I)$. Finally, to see that $F \cap W = \mathrm{inj}(I)$, consider $f \colon \str \to \str[Y] \in F \cap W$ as well as factorization
    \[ \str \xhookrightarrow{i} \hat{\str} \xrightarrow{\hat f} \str[Y]\]
    of $f$ into $i \in \mathrm{cof}(I)$ and $\hat f \in \mathrm{inj}(I)$. Since $f, \hat f \in W$, it follows by two-out-of-three, that the same holds for $i$. It follows from \cref{thm:loc_ex_claim_2} that $i \in AC$. In particular, $i$ has the left lifting property with respect to $f$ from which it follows that $f$ is a retract of $\hat f$, and hence an element of $\mathrm{inj}(I)$.  
    Let us assume that we have shown \cref{thm:loc_ex_claim,thm:loc_ex_claim_2} as well as the equivalence in \ref{thm:loc_ex_cof} for now. Note that the thus defined model category is again combinatorial. Indeed, we have provided a set of generators for cofibrations in \ref{thm:loc_ex_cof} and a set of generators for acyclic cofibrations is given by the ones for $\sStratD$ ($\sStratC$). Next, let us verify simpliciality. Suppose that $i \colon \str[A] \to \str[B]$ lies in $\mathrm{cof}(I)$ and that $j \colon A \to B$ is a cofibration of simplicial sets. We need to show that
    \[
    f \colon \str[C] := \str[B] \otimes A \cup_{\str[A] \otimes A}\str[A] \otimes B \to \str[B] \otimes B =: \str[D]
    \]
    again lies in $\mathrm{cof}(I)$. That the induced map of simplicial sets is a monomorphism is immediate from the corresponding statement on simplicial sets. Note that, by the equivalent characterization of $\mathrm{cof}(I)$ in \ref{thm:loc_ex_cof},
    $\mathrm{cof}(I)$ has the property that for any morphism $g \in \sStratN$ that induces a monomorphism on simplicial sets and
    any $i' \in \mathrm{cof}(I)$ with target the source of $g$, it holds that
    \[
    g \in \mathrm{cof}(I) \iff  g \circ i' \in \mathrm{cof}(I).
    \]
    Thus, it suffices to show that
    \begin{align*}
        \str[B] \otimes A \to \str[B] \otimes B
    \end{align*}
    and 
    \[
    \str[B] \otimes A \to \str[B] \otimes A \cup_{\str[A] \otimes A}\str[A] \otimes B
     \]
    are in $\mathrm{cof}(I)$. Using the stability of $\mathrm{cof}(I)$ under pushouts, we may thus reduce to the cases where either $\str[A]$ or $A$ is empty, i.e. $\str[C]$ is of the form $\str[B] \otimes A$ or $\str[A] \otimes B$.
    Now, again using \ref{thm:loc_ex_cof}, and the fact that pushout diagrams in $\sStratN$ are detected on the poset and simplicial set level, it suffices to show that
    \begin{diagram}\label{diag:thm_loc_ex_c_and_d}
       \rpstr[C] \arrow[r] \arrow[d] & \rpstr[D] \arrow[d] \\
       \pstr[C] \arrow[r] & \pstr[D]
        \end{diagram}
    is a pushout diagram in $\Pos$. 
    Finally, note that applying $-\otimes K$ acts as $- \times \pi_0(A)$ (with $\pi_0(A)$ equipped with the discrete poset structure) both on the level of posets as well as on the level of refined posets. If $\str[A] = \emptyset$, then by assumption $\rpstr[B] \to \pstr[B]$ is an isomorphism and it follows that \cref{diag:thm_loc_ex_c_and_d} is of the form
    \begin{diagram}
       \rpstr[B] \times \pi_0(A) \arrow[r ] \arrow[d, "\cong"] & \rpstr[B] \times \pi_0(B) \arrow[d, "\cong"] \\
       \pstr[B]  \times \pi_0(A)  \arrow[r] & \pstr[B] \times \pi_0(B) \spacecomma
    \end{diagram}
     with horizontals induced by $j$. Since both verticals are isomorphisms, this diagram is pushout. 
    If $A$ is empty, then \cref{diag:thm_loc_ex_c_and_d} is of the form
    \begin{diagram}
       \rpstr[A] \times \pi_0(B) \arrow[r] \arrow[d] & \rpstr[B] \times \pi_0(B) \arrow[d] \\
       \pstr[A] \times \pi_0(B) \arrow[r] & \pstr[B] \times \pi_0(B) \spacecomma
    \end{diagram}
    with horizontal induced by $i$.
    Consequently, it follows from 
    \begin{diagram}
       \rpstr[A]  \arrow[r] \arrow[d] & \rpstr[B] \arrow[d] \\
       \pstr[A]  \arrow[r] & \pstr[B] 
    \end{diagram}
    being pushout by assumption, that \cref{diag:thm_loc_ex_c_and_d} is also pushout in this case. \\
    Finally, if either $i$ or $j$ is an acyclic cofibration, then it follows by the simpliciality of $\sStratD$ ($\sStratC$) and \cref{thm:loc_ex_claim_2} that $f$ is also an acyclic cofibration. \\
    To finish the proof, it remains to show \cref{thm:loc_ex_claim,thm:loc_ex_claim_2} as well as the equivalence in \ref{thm:loc_ex_cof}. This is the content of \cref{lem:char_ac_fib_char_red,lem:char_cofib,lem:char_ac_cof_red}.
    \end{proof}
    \begin{definition}\label{def:ref_mod_struct}
    We denote by $\sStratDR$ and $\sStratCR$, respectively, the simplicial right Bousfield localizations in \cref{prop:ex_red_struct}. They are, respectively, called the \define{diagrammatic} and the \define{categorical} model structure on $\sStrat$.
    Weak equivalences in $\sStratDR$ are called \define{diagrammatic equivalences.}
    Weak equivalences in $\sStratCR$ are called \define{Joyal-Kan equivalences.}
    We call the homotopy theory presented by $\sStratCR$ the $(\infty,1)$-category of refined abstract homotopy types and denote it by $\AbStr^{\ared}$.
\end{definition}
\begin{remark}\label{rem:0-connected_vs_ref_present}
    It follows from \cref{rem:0-connected_vs_ref} that the refined abstract stratified homotopy types are precisely what \cite{Exodromy} calls $0$-connected stratified spaces. In this sense, the part of \cref{prop:ex_red_struct} that is concerned with the Joyal-Kan model structure can be taken to be the construction of a model structure presenting $0$-connected stratified spaces.
\end{remark}
Let us also make the following observation, which is immediate from the characterization of the defining classes in \cref{prop:ex_red_struct}.
\begin{lemma}\label{lem:cof_rep_in_ref}
    For any $\str \in \sStrat$, the natural transformation $\str^{\ared} \to \str$ is an acyclic fibration in $\sStratDR$ ($\sStratCR$). It defines a cofibrant replacement of $\str \in \sStratDR$ ($\sStratCR$).
\end{lemma}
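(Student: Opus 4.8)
The plan is to deduce both assertions directly from the characterisation of the defining classes of $\sStratDR$ ($\sStratCR$) recorded in \cref{prop:ex_red_struct}; no genuinely new argument is needed, and the only nontrivial input is the idempotency of $(-)^{\ared}$ from \cref{prop:refinement_is_ra}.

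First I would check that the refinement morphism $\str^{\ared}\to\str$ is an acyclic fibration. By \ref{thm:loc_ex_af} it suffices to verify that $(\str^{\ared}\to\str)^{\ared}$ is an acyclic fibration in $\sStratD$ ($\sStratC$). Now $(-)^{\ared}$ is, by \cref{prop:refinement_is_ra}, the right adjoint to the inclusion of refined stratified simplicial sets, with counit given by the refinement transformation; hence it is an idempotent coreflection, and applying the functor to its own counit at $\str$ yields an \emph{isomorphism} $(\str^{\ared})^{\ared}\xrightarrow{\ \cong\ }\str^{\ared}$. Since isomorphisms are acyclic fibrations in any model category, this gives the claim. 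In particular, $\str^{\ared}\to\str$ is a weak equivalence in $\sStratDR$ ($\sStratCR$).

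It then remains to see that $\str^{\ared}$ is cofibrant, so that $\str^{\ared}\to\str$ is an acyclic fibration out of a cofibrant object and hence a (functorial) cofibrant replacement. By \ref{thm:loc_ex_cof} the cofibrant objects of $\sStratDR$ ($\sStratCR$) are exactly the refined stratified simplicial sets, and $\str^{\ared}$ is refined by \cref{prop:refinement_is_ra} — equivalently because the natural map $(\str^{\ared})^{\ared}\to\str^{\ared}$ is the isomorphism exhibited above.

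I do not expect a real obstacle: the statement is essentially a bookkeeping corollary of \cref{prop:ex_red_struct}. The one place that warrants a moment's care is the assertion that $(-)^{\ared}$ carries the refinement morphism to an \emph{isomorphism} (rather than merely to a weak equivalence); this is exactly the idempotency of the coreflection $(-)^{\ared}$, and once it is invoked both halves of the lemma follow at once.
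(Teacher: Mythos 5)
Your argument is correct and matches the paper's intent exactly: the paper treats this lemma as an immediate consequence of \cref{prop:ex_red_struct}, and your proof simply spells out that reduction — reading the acyclic-fibration claim off \ref{thm:loc_ex_af} via the idempotency of $(-)^{\ared}$ (so that $(\str^{\ared}\to\str)^{\ared}$ is an isomorphism, hence trivially an acyclic fibration), and reading cofibrancy of $\str^{\ared}$ off \ref{thm:loc_ex_cof} since refined objects are precisely the cofibrant ones.
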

Furthermore, we are going to need the following property of the refined model structures, which follows from \cref{lem:red_pres_filt_colim} and \cref{prop:strat_we_stable_colim}.
\begin{lemma}\label{prop:we_stable_under_colim_ref}
  Weak equivalences in $\sStratDR$ and $\sStratCR$ are stable under filtered colimits.
\end{lemma}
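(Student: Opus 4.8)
The plan is to reduce the statement to the corresponding fact in the unrefined model structures, using the characterization of weak equivalences established in \cref{prop:ex_red_struct}. Recall from part \ref{thm:loc_ex_weak} of that proposition that a morphism $f \colon \str \to \str[Y]$ is a weak equivalence in $\sStratDR$ ($\sStratCR$) if and only if $f^{\ared}$ is a weak equivalence in $\sStratD$ ($\sStratC$). So I would start from a filtered diagram $\{ f_i \colon \str_i \to \str[Y]_i \}$ of weak equivalences in $\sStratDR$ ($\sStratCR$), indexed by some filtered category, and form its colimit $f \colon \varinjlim_i \str_i \to \varinjlim_i \str[Y]_i$ in the arrow category. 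Since $\sStratDR$ ($\sStratCR$) has the same underlying category $\sStrat$ as $\sStratD$ ($\sStratC$), these colimits are computed in $\sStrat$ and are levelwise, so there is no ambiguity about what ``stable under filtered colimits'' means.

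Next I would apply \cref{lem:red_pres_filt_colim}: since $(-)^{\ared}$ preserves filtered colimits, we obtain a natural identification $f^{\ared} \cong \varinjlim_i f_i^{\ared}$ of morphisms in $\sStrat$. By hypothesis together with the characterization of weak equivalences recalled above, each $f_i^{\ared}$ is a weak equivalence in $\sStratD$ ($\sStratC$). Then \cref{prop:strat_we_stable_colim}, which asserts precisely that weak equivalences in $\sStratD$ and $\sStratC$ are closed under filtered colimits, lets me conclude that $\varinjlim_i f_i^{\ared} = f^{\ared}$ is again a weak equivalence in $\sStratD$ ($\sStratC$). Applying the characterization of weak equivalences from \cref{prop:ex_red_struct} once more, now in the reverse direction, this shows that $f$ is a weak equivalence in $\sStratDR$ ($\sStratCR$), as desired.

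I do not expect any genuine obstacle here: the argument is a short chase through the characterization of weak equivalences in \cref{prop:ex_red_struct}, the colimit-preservation of $(-)^{\ared}$ from \cref{lem:red_pres_filt_colim}, and the stability statement of \cref{prop:strat_we_stable_colim}. The only points deserving a word of care are the bookkeeping that filtered colimits in the relevant arrow category are computed in $\sStrat$ and are unchanged upon passing to the (co)reflective localizations, and the naturality needed to identify $\varinjlim_i f_i^{\ared}$ with $f^{\ared}$; both are immediate consequences of the cited results.
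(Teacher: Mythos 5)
Your argument is correct and is exactly what the paper intends: the paper states that the lemma follows from \cref{lem:red_pres_filt_colim} and \cref{prop:strat_we_stable_colim}, leaving the chase through the characterization of weak equivalences in \cref{prop:ex_red_struct} implicit, and your proposal simply fills in those details in the expected way.
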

Also note that it follows from \cref{prop:ex_red_struct} together with \cref{prop:c_is_left_bous_of_d} that:
\begin{proposition}\label{prop:c_is_left_bous_of_d_ref}
    $\sStratCR$ is the left Bousfield localization of $\sStratDR$ at the set of stratified inner horn inclusions $\{  \stratHorn \hookrightarrow \stratSim \hookrightarrow \Delta^n \mid 0<k<n\}$.
\end{proposition}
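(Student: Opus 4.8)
The plan is to verify directly that $\sStratCR$ satisfies the universal property of the left Bousfield localization of $\sStratDR$ at the set $T = \{\stratHorn \hookrightarrow \Delta^n \mid 0<k<n\}$ of stratified inner horn inclusions into the trivially stratified simplex. Recall from \cref{prop:ex_red_struct} that $\sStratDR$ and $\sStratCR$ have the same cofibrations (generated by $I$ as in \ref{thm:loc_ex_cof}) and the same acyclic cofibrations are \emph{not} the same — rather, $\sStratCR$ has the acyclic cofibrations of $\sStratC$, while $\sStratDR$ has those of $\sStratD$. So the left Bousfield localization, if it exists, must have underlying category $\sStrat$, the cofibrations of $\sStratDR$, and the weak equivalences generated by forcing the maps in $T$ to become weak equivalences. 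The first step is to observe that by \cref{prop:c_is_left_bous_of_d}, $\sStratC$ is the left Bousfield localization of $\sStratD$ at exactly the set $T$ (note that the inner horn inclusions there land in $\stratSim$, which is the same simplex whether one regards it as trivially stratified or not, since the horn inclusion into $\Delta^n$ factors through $\stratSim$ — so $T$ is literally the same set as in \cref{prop:c_is_left_bous_of_d}). By \cref{prop:cof_comb} all four model structures are combinatorial, and by \cref{prop:ex_red_struct} the refined model structures are combinatorial and simplicial, so the left Bousfield localization $L_T\sStratDR$ exists by \cite[Thm. 4.7]{BarwickLeftRight} (or the general left-proper-combinatorial localization machinery, once we check left properness — but in the simplicial combinatorial setting of \cite{BarwickLeftRight} this is automatic).

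It then remains to identify $L_T\sStratDR$ with $\sStratCR$. Both have the same underlying category and the same cofibrations. The key step is to match weak equivalences. I would use the characterization of weak equivalences in a left Bousfield localization via the $T$-local objects: a map $f$ is a weak equivalence in $L_T\sStratDR$ iff it induces a weak equivalence on derived mapping spaces into every $T$-local fibrant object of $\sStratDR$. Using \cref{lem:cof_rep_in_ref} — the cofibrant replacement in both refined model structures is the refinement $\str^\ared \to \str$ — and the fact (from \cref{prop:ex_red_struct}) that a map $f$ is a weak equivalence in $\sStratCR$ iff $f^\ared$ is a weak equivalence in $\sStratC$, I would reduce the comparison to the statement on refined objects: on the subcategory of refined (= cofibrant) objects, the weak equivalences of $\sStratCR$ are precisely those whose image is a weak equivalence in $\sStratC$, and likewise $L_T\sStratDR$ restricted to refined objects should agree with $L_T$ applied to the ``non-refined'' model structure on refined objects. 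Concretely, since $(-)^\ared$ preserves all of the relevant classes (\cref{prop:ared_pres_things}) and the maps in $T$ have refined source and target (the trivially stratified $\stratHorn$ and $\Delta^n$ are refined, being $0$-connected in the evident way — all strata path-connected), the localization at $T$ interacts cleanly with refinement. I would then invoke \cref{prop:c_is_left_bous_of_d}: a refined map $f$ has $f$ a weak equivalence in $\sStratC$ iff it is a $T$-local equivalence in $\sStratD$, iff (since $f$ is a map of refined objects and refined objects are cofibrant) $f$ is a weak equivalence in $L_T\sStratDR$. Transporting back along $(-)^\ared$ using that $\str^\ared\to\str$ is a weak equivalence in both $\sStratCR$ and $L_T\sStratDR$ (it is a $T$-local equivalence because it is already a weak equivalence in $\sStratDR$, hence in any localization) gives the identification of weak equivalences on all objects. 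Since two model structures on the same category with the same cofibrations and the same weak equivalences coincide, we are done; simpliciality and combinatoriality are then inherited from \cref{prop:ex_red_struct} and \cite{BarwickLeftRight}.

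The main obstacle I anticipate is the bookkeeping around the fact that $\sStratDR$ and $\sStratCR$ are \emph{right} Bousfield localizations of $\sStratD$ and $\sStratC$ — so their fibrant objects are subtle and their cofibrant objects (the refined ones) are the tractable ones. Left Bousfield localization is usually controlled via \emph{fibrant} local objects, so one must be careful that the $T$-local fibrant objects of $\sStratDR$ are well-understood; here one uses that fibrations and acyclic cofibrations in $\sStratDR$ agree with those in $\sStratD$ (\cref{prop:ex_red_struct}, last item), so fibrant objects of $\sStratDR$ are exactly the fibrant objects of $\sStratD$, and hence the $T$-local fibrant objects of $\sStratDR$ are exactly the $T$-local fibrant objects of $\sStratD$, which by \cref{prop:c_is_left_bous_of_d} are the fibrant objects of $\sStratC$. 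This is the crucial compatibility that makes the argument go through; once it is in place, the rest is a matter of chasing cofibrant replacements through $(-)^\ared$ using Propositions \ref{prop:ared_pres_things} and \ref{prop:ex_red_struct} and \cref{lem:cof_rep_in_ref}. An alternative, possibly cleaner route would be to bypass local objects entirely and argue at the level of the two weak factorization systems: show $\mathrm{cof}(I)\cap W_{\sStratCR} = \mathrm{cof}(I)\cap W_{L_T\sStratDR}$ directly, mirroring the weak factorization system argument already used in the proof of \cref{prop:ex_red_struct}; I would try this first if the local-object bookkeeping becomes unwieldy.
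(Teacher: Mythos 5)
Your proposal is correct and spells out in detail the argument the paper only gestures at (the paper's ``proof'' is the single sentence ``it follows from \cref{prop:ex_red_struct} together with \cref{prop:c_is_left_bous_of_d}''). The chain you identify is exactly the right one: by the last item of \cref{prop:ex_red_struct}, $\sStratDR$ has the same fibrant objects as $\sStratD$; the maps in $T$ have refined source and target, hence are maps of cofibrant objects of $\sStratDR$ as well as of $\sStratD$, so $T$-locality of a fibrant $Z$ is the same condition in both; by \cref{prop:c_is_left_bous_of_d} the $T$-local fibrants are exactly the fibrant objects of $\sStratC$, which again by \cref{prop:ex_red_struct} coincide with the fibrant objects of $\sStratCR$; and the cofibrations of $\sStratDR$ and $\sStratCR$ agree. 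Matching weak equivalences via cofibrant replacement by $(-)^{\ared}$ then closes the argument. One could streamline by dropping the detour through ``$L_T\sStratDR$ exists by Barwick'': existence needs a left-properness hypothesis on $\sStratDR$ that you do not check (and which is not automatic for a right Bousfield localization even of a model structure in which everything was cofibrant), and it is not needed, since the content of the proposition is an identification, not a construction — it suffices to verify directly that $\sStratCR$ has the cofibrations of $\sStratDR$ and that its weak equivalences are the $T$-local equivalences of $\sStratDR$, which is precisely what your mapping-space computation does. A minor slip: you describe $\stratHorn$ as ``trivially stratified,'' but it is stratified over $[n]$; the relevant point (that it is refined, hence cofibrant in $\sStratDR$) is nevertheless correct.
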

Finally, the following observation will be useful when passing to the topological scenario:
\begin{proposition}\label{prop:char_of_ref_cat_equ}
    Let $f \colon \str \to \str[Y]$ be a stratified simplicial map between stratified simplicial sets $\str, \str[Y]$ whose strata are Kan complexes. Then $f$ is a Joyal-Kan equivalence if and only if the underlying map of simplicial sets $\forget({f})$ is a categorical equivalence (also called Joyal equivalences). 
\end{proposition}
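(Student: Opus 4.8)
The plan is to exploit the characterizations of the defining classes of $\sStratCR$ from \cref{prop:ex_red_struct} together with the fact that, under the hypothesis that all strata are Kan complexes, the refinement functor can be computed strictly. First I would reduce to the case where $f$ is the identity on posets: since the question is about a single map, and since $\rpstr$ is computed via $P\circ\forget$ after replacing strata by Kan complexes (which is already the case here by \cref{prop:computing_der_poset}), the map $\rpstr[X] \to \rpstr[Y]$ is simply $P(\forget f)$ on the nose. If $\forget f$ is a categorical equivalence, then by \cref{lem:pos_of_cat_eq} the map $P(\forget f)$ is an isomorphism of posets; so after passing to a suitable isomorphic model we may assume $\pstr[X] = \pstr[Y] = \pos$ and, by factoring, that $f$ is literally the identity on the poset $\pos$.

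Next, I would unwind what a refined categorical equivalence means under this reduction. By \ref{thm:loc_ex_weak}, $f$ is a weak equivalence in $\sStratCR$ if and only if $f^{\ared}$ is a weak equivalence in $\sStratC$. By \cref{prop:ared_pres_things}, $(-)^{\ared}$ preserves and reflects weak equivalences among maps that are isomorphisms on posets (this is the content of its last sentence together with the fact that it preserves weak equivalences), so it would suffice to show: for a map $f$ over a fixed poset $\pos$ between objects with Kan strata, $f$ is a weak equivalence in $\sStratC$ if and only if $\forget f$ is a categorical equivalence of simplicial sets. But this is exactly the argument already carried out inside the proof of \cref{prop:computing_der_poset_pre}: $\sSetPC$ is the left Bousfield localization of the Joyal slice model structure at the inclusions forcing conservativity of $\ustr \to \pos$, objects with Kan strata (indeed any objects whose fibers have only invertible morphisms) are local with respect to these, and hence by \cite[Prop. 6.3]{nlab:left_bousfield_localization_of_model_categories} a map between such local objects is a weak equivalence in $\sSetPC$ iff its underlying simplicial map is a categorical equivalence. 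I would assemble these pieces: $\forget f$ a categorical equivalence $\Rightarrow$ $f$ a weak equivalence in $\sSetPC = \sSetPCN$ $\Rightarrow$ $f$ a weak equivalence in $\sStratC$ $\Rightarrow$ (since $f$ is already refined up to iso on posets, or after applying $(-)^{\ared}$) $f$ is a refined categorical equivalence; and conversely.

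The one genuinely delicate point — the main obstacle — is the bookkeeping around the poset level when $f$ is not assumed to be refined or to be the identity on posets to begin with. Objects with Kan strata need not be refined (the poset may have spurious relations or empty strata cannot occur, but repeated elements within a path-component can), so $f^{\ared}$ genuinely changes the poset. I would handle this by noting that $(-)^{\ared}$ acts on the poset level by the derived functor $\rpstr[-]$, which by \cref{prop:computing_der_poset} equals $P(\ustr[-])$ here, so $\forget(f^{\ared})$ is obtained from $\forget f$ by restricting along the canonical maps $\ustr[X^{\ared}] \to \ustr$ which are isomorphisms on underlying simplicial sets (the refinement only alters the map to the poset, not the simplicial set — indeed $\ustr[X^{\ared}] = \ustr$). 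Hence $\forget(f^{\ared}) = \forget(f)$ as maps of simplicial sets, and the equivalence "$\forget f$ a categorical equivalence $\iff$ $\forget(f^{\ared})$ a categorical equivalence" is trivial. Combined with \cref{prop:ared_pres_things} and the criterion for weak equivalences in $\sSetPC$ between objects with only-invertible fibers recalled above, this closes the argument in both directions. I expect the writeup to be short, mostly a matter of citing \cref{prop:ex_red_struct}, \cref{prop:ared_pres_things}, \cref{prop:computing_der_poset} and the internal localization argument of \cref{prop:computing_der_poset_pre}.
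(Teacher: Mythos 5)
Your proposal is correct and follows essentially the same path as the paper's proof: unwind refined categorical equivalence via $f^{\ared}$, use that $\forget(f^{\ared}) = \forget(f)$, and dispose of the refined-poset condition via \cref{lem:pos_of_cat_eq} and \cref{prop:computing_der_poset}; where the paper cites \cite[Thm.~0.2.2.2]{haine2018homotopy} as a black box for the claim that a poset-preserving map between objects with Kan strata is a $\sSetPC$-equivalence iff its underlying simplicial map is a categorical equivalence, you instead re-run the Bousfield localization argument already contained in the proof of \cref{prop:computing_der_poset_pre}, which amounts to the same thing. The one wrinkle is that your opening reduction is misphrased (one cannot in general assume $\pstr[X] = \pstr[Y]$, only that the refined posets are isomorphic, and the reduction should be applied to $f^{\ared}$ rather than $f$), but your later observation that the refinement does not change the underlying simplicial set repairs this.
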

\begin{proof}
  By definition, $f$ is a Joyal-Kan equivalence if and only if $f^{\ared}$ is a categorical equivalence. By \cite[Thm. 0.2.2.2]{haine2018homotopy} this is, in turn, equivalent to the following two conditions being fulfilled.
     \begin{enumerate}
         \item The underlying simplicial map of $f$, $\forget(f)$, is a categorical equivalence.
         \item $f$ induces an isomorphism on refined posets.
     \end{enumerate}
     However, to compute the map on refined posets, by \cref{prop:computing_der_poset}, there is no need to derive at all, and it is given by $P( \forget(f))$. Since $\forget(f)$ is a categorical equivalence, it follows from \cref{lem:pos_of_cat_eq} that the second condition is redundant, as was to be shown.
\end{proof}
We may summarize the whole situation as follows.
\begin{proposition}\label{prop:diag_of_bousfield_loc_sglob}
    The simplicial, combinatorial model structures on $\sStrat$ fit into a diagram of Bousfield localizations
    \begin{diagram}
        \sStratD \arrow[r]\arrow[d] & \sStratC  \arrow[d]\\
        \sStratDR \arrow[r] &\sStratCR
    \end{diagram}
    with the verticals right Bousfield and the horizontals left Bousfield. The verticals are obtained by localizing the stratified inner horn inclusions. The horizontals are obtained by localizing the refinement morphisms $\str^{\ared} \to \str$.
\end{proposition}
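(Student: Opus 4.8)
The plan is to treat this as a bookkeeping statement and assemble it from results already in place. The four model structures $\sStratD$, $\sStratC$, $\sStratDR$, $\sStratCR$ all have the same underlying category $\sStrat$ (\cref{def:non_ref_mod_struct,def:ref_mod_struct}), so each of the four edges of the square is realised by the identity functor on $\sStrat$ and the square of identity functors commutes on the nose. Thus there is nothing to check about the shape of the diagram itself; the whole content is to recognise each edge as a Bousfield localization of the asserted kind, together with combinatoriality and simpliciality of the four vertices, all of which has been established already.

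Concretely I would run through the four edges. Combinatoriality and simpliciality of $\sStratD$ and $\sStratC$ is \cref{prop:cof_comb}, and of $\sStratDR$ and $\sStratCR$ is part of \cref{prop:ex_red_struct}, so every vertex is a simplicial combinatorial model category. The edge $\sStratD\to\sStratC$ is the left Bousfield localization at the inner stratified horn inclusions $\stratHorn\hookrightarrow\stratSim$ with $0<k<n$ by \cref{prop:c_is_left_bous_of_d}, and the edge $\sStratDR\to\sStratCR$ is the left Bousfield localization of $\sStratDR$ at the same set of inner stratified horn inclusions by \cref{prop:c_is_left_bous_of_d_ref}. The edge $\sStratD\to\sStratDR$ is, by \cref{def:ref_mod_struct} together with \cref{prop:ex_red_struct}, the right Bousfield localization of $\sStratD$ at the class of refinement morphisms $\{\str^{\ared} \to \str \mid \str \in \sStrat\}$, and likewise $\sStratC\to\sStratCR$ is the right Bousfield localization of $\sStratC$ at the same class. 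In each case the identity functor in the relevant direction is a left, resp.\ right, Quillen functor exhibiting the localization, directly from the cited statement.

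Assembling these four facts gives precisely the asserted commuting square of Bousfield localizations, the two vertical edges right Bousfield and the two horizontal edges left Bousfield. I do not expect a genuine obstacle, since each edge is a citation; the only point that is not a literal citation is that the single identity functor on $\sStrat$ carries all of these Quillen structures simultaneously, so that the two composites $\sStratD\to\sStratC\to\sStratCR$ and $\sStratD\to\sStratDR\to\sStratCR$ agree — automatic, as a composite of identity functors is the identity. The one thing worth keeping straight is the logical order: one must know $\sStratDR$ to be combinatorial (\cref{prop:ex_red_struct}) before forming its left Bousfield localization at a \emph{set} of maps, which is exactly what makes the bottom edge, \cref{prop:c_is_left_bous_of_d_ref}, legitimate.
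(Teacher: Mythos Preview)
Your proposal is correct and matches the paper's treatment: in the paper this proposition is stated as a summary (``We may summarize the whole situation as follows'') with no explicit proof, so your approach of assembling the already-established pieces via \cref{prop:cof_comb}, \cref{prop:ex_red_struct}, \cref{prop:c_is_left_bous_of_d}, and \cref{prop:c_is_left_bous_of_d_ref} is exactly what is intended.

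One remark worth making explicit: the last two sentences of the proposition as written in the paper have the words ``verticals'' and ``horizontals'' interchanged. The horizontals $\sStratD\to\sStratC$ and $\sStratDR\to\sStratCR$ are the left Bousfield localizations at the stratified inner horn inclusions (\cref{prop:c_is_left_bous_of_d}, \cref{prop:c_is_left_bous_of_d_ref}), while the verticals $\sStratD\to\sStratDR$ and $\sStratC\to\sStratCR$ are the right Bousfield localizations at the refinement morphisms (\cref{prop:ex_red_struct}). Your write-up already assigns these correctly, so you have in effect silently repaired a typo in the statement; it would be worth flagging this so a reader is not confused by the discrepancy.
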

Furthermore, consider the following result which - retroactively - justifies the naming conventions for the different notions of equivalences of stratified simplicial sets:
\begin{proposition}\label{prop:names_make_sense}
    Let $f \colon \str \to \str[Y]$ be a stratified simplicial map. Then $f$ is a poset-preserving Joyal-Kan equivalence if and only if $f$ is a Joyal-Kan equivalence and the underlying map of posets, $\pos(f) \colon \pstr \to \pstr[Y] $, is an isomorphism. The analogous result for diagrammatic equivalences holds.
\end{proposition}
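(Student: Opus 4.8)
The plan is to route everything through the refinement functor $(-)^{\ared}$, using the properties recorded in \cref{prop:ared_pres_things}: that $(-)^{\ared}$ preserves cofibrations, acyclic fibrations and weak equivalences in both $\sStratD$ and $\sStratC$, and — crucially — that a cofibration $j$ inducing an isomorphism on posets is acyclic \emph{if and only if} $j^{\ared}$ is an acyclic cofibration. I would prove the categorical statement in full and then observe that the diagrammatic one follows verbatim, replacing $\sStratC,\sStratCR$ by $\sStratD,\sStratDR$ throughout, since \cref{prop:ared_pres_things}, \cref{rem:ac_fib_sStrat} and \cref{prop:ex_red_struct} are all stated in both flavours.

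For the forward implication, suppose $f$ is a poset preserving categorical equivalence, i.e.\ a weak equivalence in $\sStratC$. By the construction of the glued model structure (\cref{lem:gluing_model_struct} together with \cref{def:non_ref_mod_struct}) this already forces $\pos(f)$ to be an isomorphism. Since $(-)^{\ared}$ preserves weak equivalences of $\sStratC$, the map $f^{\ared}$ is a weak equivalence in $\sStratC$, and the characterization of weak equivalences in \cref{prop:ex_red_struct} says precisely that $f$ is then a categorical equivalence.

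For the converse, assume $f$ is a categorical equivalence with $\pos(f)$ an isomorphism; unwinding \cref{prop:ex_red_struct}, this means $f^{\ared}$ is a weak equivalence in $\sStratC$. I would not argue directly, since the vertical refinement maps $\str^{\ared}\to\str$ are not $\sStratC$-equivalences and so a naive two-out-of-three is unavailable; instead I would factor $f=p\circ i$ in $\sStratC$ with $i$ a cofibration and $p$ an acyclic fibration. By \cref{rem:ac_fib_sStrat}, $\pos(p)$ is an isomorphism, hence so is $\pos(i)=\pos(p)^{-1}\circ\pos(f)$. By \cref{prop:ared_pres_things}, $p^{\ared}$ is again an acyclic fibration (in particular a weak equivalence) and $i^{\ared}$ is again a cofibration; from $f^{\ared}=p^{\ared}\circ i^{\ared}$ and two-out-of-three, $i^{\ared}$ is an acyclic cofibration in $\sStratC$. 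Now the last clause of \cref{prop:ared_pres_things} applies to the cofibration $i$, which induces an isomorphism on posets, and yields that $i$ is acyclic. Hence $f=p\circ i$ is a composite of weak equivalences in $\sStratC$, i.e.\ a poset preserving categorical equivalence, as desired.

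The only genuinely non-formal point — and the step I would be most careful about — is this reflection of weak equivalences along $(-)^{\ared}$ for maps inducing an isomorphism on posets: it does not follow from mere preservation, and the entire purpose of the (cofibration, acyclic fibration)-factorization is to produce a cofibration $i$ to which the final sentence of \cref{prop:ared_pres_things} can be applied. That sentence is the real engine of the argument, so the care goes into checking its hypotheses are met, namely that $i$ is a cofibration with $\pos(i)$ an isomorphism and that $i^{\ared}$ is genuinely an acyclic cofibration.
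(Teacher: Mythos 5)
Your proof is correct, and it takes a genuinely different route from the paper's. The paper handles the two flavours separately and with heavier machinery: for the categorical case it runs a zig-zag through $\Ex^{\infty}$, invokes \cref{prop:char_of_ref_cat_equ} and Haine's theorem to conclude that the underlying simplicial map of $\Ex^{\infty}\str \to \Ex^{\infty}\str[Y]$ is a categorical equivalence, and then chases the diagram by two-out-of-three; for the diagrammatic case it gives a separate argument comparing homotopy links $\HolIPS$ between $\str^{\ared}$ and $\str$ via \cref{prop:refinement_is_ra}. Your proof instead factors $f$ as (cofibration)$\,\circ\,$(acyclic fibration) in $\sStratC$ (resp.\ $\sStratD$), uses \cref{rem:ac_fib_sStrat} to see the cofibration part $i$ induces an isomorphism on posets, applies two-out-of-three to $f^{\ared}=p^{\ared}\circ i^{\ared}$ to conclude $i^{\ared}$ is an acyclic cofibration, and then invokes the final clause of \cref{prop:ared_pres_things} to reflect acyclicity back onto $i$. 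This is shorter, avoids $\Ex^{\infty}$ and the external reference to Haine's theorem, and — as you observe — works verbatim in both the diagrammatic and categorical flavours, whereas the paper's two cases are proved by unrelated methods. You correctly identified the reflection clause of \cref{prop:ared_pres_things} as the load-bearing step and correctly set up the factorization so its hypotheses (cofibration, isomorphism on posets) are satisfied. One small caveat: your proof leans entirely on \cref{prop:ared_pres_things}, so it is only as strong as that proposition's own proof; the paper's alternate route via $\Ex^{\infty}$ and homotopy links provides a somewhat independent check and also exhibits concretely which homotopy-link equivalences are implied, which may be why the authors preferred it. Nonetheless, as a proof of the stated proposition, yours is valid and cleaner.
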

\begin{proof}
   Both in the diagrammatic and categorical scenario, the only if case is immediate by \cref{prop:ared_pres_things} together with the idempotency of the refinement functor, and the characterization of weak equivalences in \cref{prop:ex_red_struct}. Next, let us show the if case in the case of Joyal-Kan equivalences. Consider the commutative diagram 
    \begin{diagram}
        \str \arrow[r, "\simeq"] \arrow[d, "\simeq"'] \arrow[ddd, bend right = 50, "\simeq"']& \str[Y] \arrow[d, "\simeq"] \arrow[ddd, bend left = 50, "\simeq"]\\
        \Ex^{\infty} \str \arrow[r] \arrow[d]& \Ex^{\infty} \str[Y] \arrow[d] \\
        \Ex^{\infty} \str^{\ared} \arrow[r] & \Ex^{\infty} \str[Y]^{\ared} \\
        \str^{\ared} \arrow[u, "\simeq "] \arrow[r] & \str[Y]^{\ared} \arrow[u, "\simeq"'] \spacecomma
    \end{diagram}
    and note that all but the upper most row has the property that all stratified simplicial sets involved have strata given by Kan complexes.
    We have marked all maps which are known to be Joyal-Kan equivalences from previous results in this article with a $\simeq$ symbol. That these maps are weak equivalences follows either by assumption, or from the natural map $1 \to \Ex^{\infty} $ even being a poset-preserving diagrammatic equivalence (see \cite[Prop. 3.9]{douteauwaas2021}). A quick diagram chase using the two-out-of-three property shows that all morphisms in the diagram are Joyal-Kan equivalences. It follows from \cref{prop:char_of_ref_cat_equ} that the underlying simplicial map of $\Ex^{\infty} \str \to \Ex^{\infty} \str[Y]$ is a categorical equivalence. Hence, by \cite[Thm. 0.2.2.2]{haine2018homotopy}, using the assumption that $\str \to \str[Y]$ induces an isomorphism on posets, it follows that $\Ex^{\infty} \str \to \Ex^{\infty} \str[Y]$ is a poset-preserving Joyal-Kan equivalence. Finally, the upper two verticals are also poset-preserving Joyal-Kan equivalences (diagrammatic even), from which, again by two-out-of-three, it follows that $\str \to \str[Y]$ is a poset-preserving Joyal-Kan equivalence. It remains to show that a stratified simplicial map that induces isomorphisms on the poset level and is a diagrammatic equivalence is a poset-preserving diagrammatic equivalence. Let $\I$ be a non-degenerate flag of $\pstr$. 
    We obtain an induced commutative diagram of simplicial sets
    \begin{diagram}
        \bigsqcup_{\I' \mapsto \I } \HolIPS[\I'] (\str^{\ared}) \arrow[d] \arrow[r] & \bigsqcup_{\I' \mapsto \pos(f)(\I)} \HolIPS[\I'] (\str[Y]^{\ared}) \arrow[d]\\
        \HolIPS (\str)  \arrow[r] & \HolIPS[\pos(f)(\I)] (\str[Y]) \spacecomma
    \end{diagram}
    where the coproducts are indexed over regular flags mapping to $\I$ (respectively $\pos(f)(\I)$) under $\rpstr \to \pstr$ ($\rpstr[Y] \to \pstr[Y]$). Since $\str \to \str[Y]$ and $\str^{\ared} \to \str[Y]^{\ared}$ are assumed to be injective on the poset level, the two horizontals are well-defined. By assumption, the upper horizontal is a weak homotopy equivalence of simplicial sets. Furthermore, it follows from an application of \cref{prop:refinement_is_ra} that the two verticals are isomorphisms of simplicial sets. Hence, the lower vertical is a weak homotopy equivalence. Since $\str \to \str[Y]$ is assumed to induce an isomorphism on the poset level, it follows that it is a poset-preserving diagrammatic equivalence.
\end{proof}
Now, let us prove the remaining open statements.
\begin{lemma}\label{lem:char_ac_fib_char_red}
    In the framework of \cref{prop:ex_red_struct} and its proof, $\mathrm{inj}(I)$ is the class of stratified maps $f \colon \str \to \str[Y]$ such that $f^{\ared}$ is an isomorphism on posets and the underlying simplicial map of $f$ is a trivial fibration (with respect to any of the model structures on presheaves on $\sSet$). In other words, $f \in \mathrm{inj}(I)$ if and only if $f^{\ared}$ is an acyclic fibration in $\sStratD$ (or equivalently in $\sStratC$).
\end{lemma}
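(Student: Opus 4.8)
The plan is to translate the right lifting property against each generator in $I$ into an elementary lifting condition on the underlying simplicial map $\forget(f)$, exploiting that the stratified simplices $\Delta^{[n]}$ behave like representables for the forgetful functor. Since the structure map of $\Delta^{[n]}$ is the identity on $\Delta^n$ (regarding $\Delta^n$ as the nerve of $[n]$) and the nerve is fully faithful on $\Pos$, one has a natural bijection $\mathrm{Hom}_{\sStrat}(\Delta^{[n]},\str)\cong\ustr_n$. Since the nerve of a poset is $2$-coskeletal, for $n\ge 2$ one likewise gets $\mathrm{Hom}_{\sStrat}(\partial\Delta^{[n]},\str)\cong\ustr_{\partial\Delta^n}$, while $\mathrm{Hom}_{\sStrat}(\partial\Delta^{[1]},\str)\cong\{(x_0,x_1)\in\ustr_0\times\ustr_0:\sstr(x_0)\le\sstr(x_1)\}$ and $\mathrm{Hom}_{\sStrat}(\partial\Delta^{[0]},\str)\cong\{\ast\}$; a parallel computation identifies a lifting problem against $\stratBound[1]\hookrightarrow\Delta^1$ with one against $\partial\Delta^1\hookrightarrow\Delta^1$ in which the two chosen vertices of $\ustr$ lie in a common stratum (and then the $1$-simplex of $\ustr[Y]$ automatically lies within a stratum). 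Carrying this out, $f\in\mathrm{inj}(I)$ is equivalent to the conjunction of: (i) $\forget(f)$ is surjective on vertices and has the right lifting property against $\partial\Delta^n\hookrightarrow\Delta^n$ for all $n\ge 2$; and (ii) every lifting problem for $\forget(f)$ against $\partial\Delta^1\hookrightarrow\Delta^1$ whose two chosen vertices $x_0,x_1\in\ustr_0$ satisfy $\sstr(x_0)\le\sstr(x_1)$ has a solution (the condition contributed by $\stratBound[1]\hookrightarrow\Delta^1$ being the special case $\sstr(x_0)=\sstr(x_1)$ of (ii)).

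Granting this dictionary, the implication ``$f^{\ared}$ is an acyclic fibration in $\sStratD$ $\Rightarrow f\in\mathrm{inj}(I)$'' is immediate: by \cref{rem:ac_fib_sStrat} such an $f$ has $\forget(f)=\forget(f^{\ared})$ a trivial fibration, and a trivial fibration satisfies (i) and (ii) in full. For the converse, suppose $f\in\mathrm{inj}(I)$; then (i) already gives that $\forget(f)$ is surjective on vertices and has the right lifting property against $\partial\Delta^n\hookrightarrow\Delta^n$ for $n\ge 2$. It remains to upgrade this to: the underlying map of $f$ is a trivial fibration and the induced map $\rpstr\to\rpstr[Y]$ is an isomorphism, which together say precisely that $f^{\ared}$ is an acyclic fibration in $\sStratD$ (equivalently in $\sStratC$), again by \cref{rem:ac_fib_sStrat}.

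I would establish the isomorphism $\rpstr\xrightarrow{\sim}\rpstr[Y]$ first, using the explicit model of \cref{prop:explicit_rp}. Surjectivity on objects is clear from surjectivity of $\forget(f)$ on vertices. For order reflection — which, together with order preservation and surjectivity, forces an isomorphism of posets — one is given a relation $[y_0]\le[y_1]$ in $\rpstr[Y]$, i.e.\ a zigzag of $1$-simplices of $\ustr[Y]$ from $y_0$ to $y_1$ whose backward edges stay inside a stratum, and one lifts it vertex by vertex to such a zigzag in $\ustr$: the forward steps are supplied by condition (ii) (their endpoints being $\sstr$-comparable by the way the partial lift is built) and the backward steps, being within a stratum, by the $\stratBound[1]\hookrightarrow\Delta^1$-condition. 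Once $\rpstr\to\rpstr[Y]$ is known to be an isomorphism, the last missing piece of the trivial fibration condition drops out: any $1$-simplex $e_Y\colon f(x_0)\to f(x_1)$ of $\ustr[Y]$ witnesses $[f(x_0)]\le[f(x_1)]$ in $\rpstr[Y]$, hence $[x_0]\le[x_1]$ in $\rpstr$ by order reflection, and by the zigzag description of \cref{prop:explicit_rp} this forces $\sstr(x_0)\le\sstr(x_1)$ in $\pstr$ (every step of a witnessing zigzag is $\le$ at the level of strata); so condition (ii) applies to every lifting problem against $\partial\Delta^1\hookrightarrow\Delta^1$, and with (i) we conclude $\forget(f)$ is a trivial fibration.

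The main obstacle is this isomorphism $\rpstr\xrightarrow{\sim}\rpstr[Y]$, concretely the zigzag-lifting argument: one must organize the induction so that each lifted forward step genuinely has $\sstr$-comparable endpoints and each lifted backward step genuinely stays inside one stratum, since these are exactly the two regimes in which the (rather restricted) lifting conditions coming from $\partial\Delta^{[1]}\hookrightarrow\Delta^{[1]}$ and $\stratBound[1]\hookrightarrow\Delta^1$ apply — the whole point of including the auxiliary generator $\stratBound[1]\hookrightarrow\Delta^1$ being to cover the within-stratum case that $\partial\Delta^{[1]}\hookrightarrow\Delta^{[1]}$ misses. One should also double-check the $2$-coskeletality input behind the $\mathrm{Hom}_{\sStrat}(\partial\Delta^{[n]},-)$ computations, since this is where the low-dimensional exceptions ($n\le 1$) that necessitate the auxiliary generator first appear.
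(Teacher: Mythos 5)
Your overall strategy (translate the RLP against each generator into elementary conditions, then show those conditions match the characterization of acyclic fibrations) is the right one, and it is also the paper's. But there is a substantive error in your dictionary that the paper avoids, and it causes the gap you yourself flag.

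The paper's generators $\partial\Delta^{[n]}\hookrightarrow\Delta^{[n]}$ are not the subobject inclusions into the stratified simplex; they are the images under $\lstr$ (the left adjoint of $\forget\colon\sStrat\to\sSet$, $K\mapsto(K\to P(K))$) of the ordinary boundary inclusions. This is stated explicitly both in the proof of \cref{cor:cof_gen_sStratD} and in the first lines of the paper's proof of this lemma. In particular $\partial\Delta^{[1]}=\lstr(\partial\Delta^1)$ has poset $[0]\sqcup[0]$, not $[1]$, so $\mathrm{Hom}_{\sStrat}(\partial\Delta^{[1]},\str)\cong\ustr_0\times\ustr_0$ with \emph{no} comparability constraint. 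By the adjunction $\lstr\dashv\forget$, having the RLP against all $\lstr(\partial\Delta^n\hookrightarrow\Delta^n)$ is literally the same as $\forget(f)$ being an acyclic fibration of simplicial sets — unconditionally. Your restricted condition (ii) is therefore too weak, and is not what the generator contributes. (A similar slip appears in your reading of $\stratBound[1]\hookrightarrow\Delta^1$: since $\stratBound[1]$ has poset $[0]\sqcup[0]$, the lifting data does \emph{not} presuppose $\sstr(x_0)=\sstr(x_1)$; that equality is a \emph{consequence} of a solution existing, which is exactly why this generator carries nontrivial content — it forces $[x_0]=[x_1]$ in $\rpstr$ whenever $f(x_0)$ and $f(x_1)$ lie in a common stratum-component of $\str[Y]$.)

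This matters because, with the correct dictionary, the direction you found hardest collapses: once you know $\forget(f)$ is a trivial fibration (directly from $\mathrm{inj}(\lstr(S))$, no bootstrap needed), the zigzag witnessing a relation in $\rpstr[Y]$ lifts with prescribed endpoints against an arbitrary cofibration of simplicial sets, and the only remaining work is to check that the backward, within-stratum edges lift to within-stratum edges — which is precisely what the RLP against $\stratBound[1]\hookrightarrow\Delta^1$ supplies. As it stands, your induction cannot be made to close: given a partial lift $z_k$ of $y_k$ and a forward edge $y_k\to y_{k+1}$, an arbitrary preimage $z_{k+1}$ of $y_{k+1}$ need not satisfy $\sstr(z_k)\le\sstr(z_{k+1})$, so your restricted (ii) does not apply; you correctly identify this as the main obstacle, but it is not resolved by "organizing the induction" — it genuinely requires the unconditional $1$-simplex lifting that the $\lstr$-reading of the generator provides. (Your final bootstrap step — once $\rpstr\to\rpstr[Y]$ is known to be an isomorphism, conclude full $1$-simplex lifting — is circular under the restricted dictionary, because establishing that isomorphism is what needed the full lifting in the first place.)

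Finally, for the converse direction (from $\rpstr\to\rpstr[Y]$ iso and $\forget(f)$ trivial fibration back to $f\in\mathrm{inj}(I)$), the paper's argument is cleaner than the one you sketch: given a lifting problem against $\stratBound[1]\hookrightarrow\Delta^1$, first solve it as a lifting problem against $\lstr(\partial\Delta^1\hookrightarrow\Delta^1)$ (possible since $\forget(f)$ is a trivial fibration), and then observe that the resulting edge $e'\colon x_0\to x_1$ automatically lies in a single stratum, because the isomorphism $\rpstr\cong\rpstr[Y]$ forces $\sstr(x_0)=\sstr(x_1)$ and an edge of the nerve of a poset with equal endpoints is degenerate. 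You should incorporate this rather than re-deriving full $1$-simplex lifting from the poset isomorphism.
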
    
\begin{proof}
     Denote by $S$ the set of boundary inclusions in $\sSet$ and by $i \in \sStrat $ the remaining cofibration $\stratBound[1] \to \Delta^1$ specified in the statement of the theorem.
    It follows from the adjunction of $\lstr \colon \sSet \to \sStrat$ with the forgetful functor $\sStrat \to \sSet$, that $f \in \mathrm{inj}(\lstr(S))$ if and only if the underlying simplicial map is an acyclic fibration.
    Let us now assume that $f \in \lstr(S)$. Under this assumption, we show that $f \in \mathrm{inj}(i)$ is equivalent to the induced map $\tilde{f} \colon \rpstr \to \rpstr[Y]$ being an isomorphism. 
    $\tilde f$ is an isomorphism, if and only if $\tilde{f}$ is surjective on elements and relations. 
    Assume that $f \in \mathrm{inj}(i \cup \lstr(S))$.
    Surjectivity on elements follows from the fact that the underlying simplicial map of $f$ is surjective (as it is an acyclic fibration). Now, by \cref{prop:explicit_rp}, it suffices to show that any zigzag as in \cref{prop:explicit_rp} lifts. For $1$-simplices that point in direction of $y$, this follows from $f \in \mathrm{inj}(\{ \partial\Delta^{[1]} \to \Delta^{[1]}\})$ . For $1$-simplices pointing in the direction of $x$, this follows from $f \in \mathrm{inj}(i)$. Conversely, let $\tilde f$ be an isomorphism.
    Given a lifting problem as the right square in \begin{diagram}
        \stratBound[1] \arrow[d, hook]\arrow[r, "1"] & \stratBound[1] \arrow[d] \arrow[r] & \str \arrow[d] \\
        \stratSim[1] \arrow[rru, dashed, "g'" {shift right=3}, ]\arrow[r] & \Delta^1 \arrow[r] \arrow[ru, "g"', dashed] & \str[Y],
    \end{diagram}
    by the assumption that $f \in \mathrm{inj}(\lstr(S))$ it follows that a solution $g'$ of the outer rectangle exists. 
    Since $\tilde f$ is an isomorphism, any two points in $\mathcal{X}$ that are mapped into the same stratum of $\mathcal{Y}$ and are connected by a path in the latter, already lie in the same stratum of $\mathcal{X}$. It follows by commutativity of the outer rectangle that $g'$ factors through $\stratBound[1] \to \Delta^1$ into a stratified map $g \colon \Delta^1 \to \str$. Since both left horizontals are epimorphisms, $g$ is a solution for the right lifting square.
\end{proof}
\begin{lemma}\label{lem:char_cofib}
    In the framework of \cref{prop:ex_red_struct} and its proof, $\mathrm{cof}(I)$ is precisely the class of stratified maps $f \colon \str[A] \to \str[B]$ such that the diagram 
   \begin{diagram}
       \str[A]^{\ared} \arrow[r] \arrow[d] & \str[B]^{\ared} \arrow[d] \\
       \str[A] \arrow[r] & \str[B]
   \end{diagram}
    is pushout and such that $f$ is a cofibration in $\sStratD$ (or equivalently $\sStratC$) (i.e. $f$ induces a monomorphism on the simplicial set level).
\end{lemma}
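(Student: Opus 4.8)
Throughout write $C$ for the class of stratified maps described in the statement, and recall from the proof of \cref{prop:ex_red_struct} that $\mathrm{cof}(I)$ and $\mathrm{inj}(I)$ form a weak factorization system and that, by \cref{lem:char_ac_fib_char_red}, $g \in \mathrm{inj}(I)$ exactly when $g$ induces an isomorphism on refined posets and a trivial fibration on underlying simplicial sets. The first step is a reformulation of $C$. Since $(-)^{\ared}$ does not change underlying simplicial sets and the two horizontal maps of the refinement square of $f \colon \str[A] \to \str[B]$ both become $\forget(f)$ after applying $\forget$, that square is automatically a pushout over $\sSet$ (the span being $\ustr[B] \xleftarrow{\forget(f)} \ustr[A] \xrightarrow{\mathrm{id}} \ustr[A]$). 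As pushouts in $\sStrat$ are detected on underlying simplicial sets and on posets separately, $f \in C$ is therefore equivalent to: $\forget(f)$ is a monomorphism, and the square of posets with corners $\rpstr[A], \rpstr[B], \pstr[A], \pstr[B]$ (so that $\pstr[B] = \pstr[A] \cup_{\rpstr[A]} \rpstr[B]$) is a pushout in $\Pos$. I use this form of the condition below.

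\textbf{$C \subseteq \mathrm{cof}(I)$.} Since $(\mathrm{cof}(I), \mathrm{inj}(I))$ is a weak factorization system it is enough to check that every $f \in C$ lifts on the left against every $g \colon \str[X] \to \str[Y]$ in $\mathrm{inj}(I)$. Given a commuting square from $f$ to $g$, the underlying square of simplicial sets has a diagonal filler $\ell \colon \ustr[B] \to \ustr[X]$ because $\forget(f)$ is a monomorphism and $\forget(g)$ a trivial fibration. It remains to lift the structure map $\pstr[B] \to \pstr[Y]$ through $\pstr[X] \to \pstr[Y]$ compatibly with $\ell$. Using the pushout description $\pstr[B] = \pstr[A] \cup_{\rpstr[A]} \rpstr[B]$ it suffices to give compatible maps out of $\pstr[A]$ — which is part of the data of the square — and out of $\rpstr[B]$. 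For the latter: each element of $\rpstr[B]$ is a path component $\beta$ of a nonempty stratum of $\str[B]$, and one sends $\beta$ to $\sstr[X](\ell(x))$ for any vertex $x \in \beta$. Well-definedness and monotonicity of this assignment, and hence that $\ell$ together with the resulting map $\pstr[B] \to \pstr[X]$ forms a stratified map fitting into the square, come down to the following, which holds because $g$ is an isomorphism on refined posets, by exactly the mechanism in the proof of \cref{lem:char_ac_fib_char_red}: two vertices of $\str[X]$ whose $g$-images lie in a common stratum of $\str[Y]$ and are joined by a path in $\str[Y]$ already lie in a common stratum of $\str[X]$ (applied to vertices joined by an edge inside a stratum of $\str[B]$, using that $\str[B] \to \str[Y]$ and $\str[A] \to \str[X]$ are stratum preserving).

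\textbf{$\mathrm{cof}(I) \subseteq C$.} It suffices to show $I \subseteq C$ and that $C$ is closed under coproducts, cobase change, transfinite composition and retracts; then $C$ contains every relative $I$-cell complex and, being retract closed, all of $\mathrm{cof}(I)$. The inclusion $I \subseteq C$ is a direct check on the finitely many generators (each is a monomorphism of simplicial sets, and in each case the posetal square above is a pushout by inspection; for $\partial\Delta^{[n]} \hookrightarrow \Delta^{[n]}$ with $n \geq 2$ both objects are already refined). Closure under retracts is formal, since the comparison map $Q \cup_P R \to S$ of a commutative square is functorial in the square and a retract of an isomorphism is an isomorphism, so a retract of a pushout square is a pushout square. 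Closure under coproducts and transfinite composition is likewise formal, using that the forgetful functor to $\Pos$ preserves all colimits, that $(-)^{\ared}$ (equivalently $\rpstr$) preserves coproducts and filtered colimits (\cref{lem:red_pres_filt_colim}), and the pasting law for pushout squares.

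\textbf{The main obstacle: cobase change.} The one genuinely nontrivial point is that $C$ is closed under cobase change. Let $f \colon \str[A] \to \str[B]$ lie in $C$, let $g \colon \str[A] \to \str[A']$ be arbitrary, and form $\str[B'] = \str[A'] \cup_{\str[A]} \str[B]$ with pushed-out map $f' \colon \str[A'] \to \str[B']$. Then $\forget(f')$ is a monomorphism, and from $f \in C$ and the pasting law one gets $\pstr[B'] = \pstr[A'] \cup_{\rpstr[A]} \rpstr[B]$ in $\Pos$; by the reformulation above, $f' \in C$ therefore amounts to the statement that $\rpstr$ carries the pushout $\str[B'] = \str[A'] \cup_{\str[A]} \str[B]$ to a pushout of posets, i.e.
\[
\rpstr[B'] \;=\; \rpstr[A'] \cup_{\rpstr[A]} \rpstr[B].
\]
This is the step I expect to need real work, since $\rpstr$ is only a right adjoint (\cref{prop:refinement_is_ra}) and does not preserve pushouts in general. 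My plan is to prove it directly from the explicit description in \cref{prop:explicit_rp}: using $f \in C$ to rewrite $\str[B] \cong \str[A] \cup_{\str[A]^{\ared}} \str[B]^{\ared}$ and hence $\str[B'] \cong \str[A'] \cup_{\str[A]^{\ared}} \str[B]^{\ared}$, one identifies the path components of the strata of $\str[B']$ and the admissible zigzags between them, exploiting that $\str[B]^{\ared} \hookrightarrow \str[B']$ attaches a refined piece along $\str[A]^{\ared}$ (so inside $\str[B]^{\ared}$ every stratum is already path connected and every posetal relation is witnessed by forward edges). The content to verify is that attaching along $g$ creates neither new identifications of path components nor new relations pointing ``backwards'' beyond those already present in $\rpstr[A'] \cup_{\rpstr[A]} \rpstr[B]$. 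Granting this, $C$ satisfies all the required closure properties, and together with the previous paragraph this gives $\mathrm{cof}(I) = C$, which is the assertion of the lemma.
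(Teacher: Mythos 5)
Your proof splits into the same two inclusions as the paper's, and it is instructive to separate them.

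\textbf{Forward inclusion $C \subseteq \mathrm{cof}(I)$.} Your argument is correct, and genuinely different from the paper's. The paper observes that applying $(-)^{\ared}$ to the lifting square produces a lifting problem in $\sStratD$ whose right vertical is an acyclic fibration there (by \cref{lem:char_ac_fib_char_red}), solves it, and then transports the solution via the universal property of the pushout square in the statement. You instead construct the lift by hand: the simplicial filler $\ell$ comes from the Joyal acyclic-fibration property, and the poset map is assembled from $\pstr[A] \to \pstr[X]$ and the assignment $\beta \mapsto \sstr[X](\ell(x))$ on $\rpstr[B]$, using the pushout presentation $\pstr[B] = \pstr[A] \cup_{\rpstr[A]} \rpstr[B]$. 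The well-definedness and monotonicity checks you invoke, via the mechanism in \cref{lem:char_ac_fib_char_red}, do go through; the paper's route is slicker because the entire verification that the constructed map is actually stratified and compatible with the two squares is subsumed by the universal property of the pushout, whereas your route requires checking these by inspection.

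\textbf{Reverse inclusion $\mathrm{cof}(I) \subseteq C$.} Here there is a genuine gap, and it is exactly the one you flag. You reduce to showing that $I \subseteq C$ and that $C$ is closed under coproducts, cobase change, transfinite composition and retracts, and you correctly identify cobase change as the only nonroutine step. But you then only sketch a plan ("My plan is to prove it directly\dots", "Granting this\dots") rather than carry it out, and what you propose to prove — closure of $C$ under cobase change of \emph{arbitrary} maps in $C$ — is stronger than needed and considerably harder. What the small object argument actually requires is the much weaker claim that \emph{pushouts of the finitely many generators $j \in I$} land in $C$, together with the routine closure under transfinite composition (\cref{lem:red_pres_filt_colim} and pasting) and retracts. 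This is exactly what the paper does: for each of the three nontrivial generators ($\emptyset \hookrightarrow \stratSim[0]$, $\stratBound[1] \hookrightarrow \stratSim[1]$, $\stratBound[1] \hookrightarrow \Delta^1$) it computes $\pstr[A] \to \pstr[B]$ and $\rpstr[A] \to \rpstr[B]$ explicitly in generators and relations via \cref{prop:explicit_rp} and verifies by hand that the square of posets is a pushout. That explicit case analysis is the real content of this direction of the lemma, and it is what is missing from your write-up. If you replace the unproven general cobase-change claim by the generator-by-generator computation, the rest of your argument assembles correctly.
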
    
\begin{proof}
    First, let us show that any $\str[A] \hookrightarrow \str[B]$ in $\mathrm{cof}(I)$ has the pushout property (that it is a monomorphism on simplicial sets is immediate). We only need to show that
     \begin{diagram}\label{diag:pushout_of_posets}
         \rpstr[A] \arrow[r] \arrow[d] &  \rpstr[B] \arrow[d] \\
         \pstr[A] \arrow[r] &\pstr[B]
    \end{diagram}
    is pushout.
    Using the small object argument, we may reduce to showing that this is true for any $j \in I$, and $\str[A] \to \str[B]$ a pushout of $j$. Let us compute explicitly the maps 
    \begin{align*}
        \rpstr[A] &\to  \rpstr[B] \\
        \pstr[A] &\to \pstr[B]
    \end{align*}
    in terms of generators and relations (using the explicit description in \cref{prop:explicit_rp}). If the codomain of $j$ is a simplex of dimension greater than $1$, then both maps on the poset level are isomorphisms. It remains to consider the three cases:
        \begin{align*}
            \emptyset &\hookrightarrow \stratSim[0] ; \\
            \stratBound[1] &\hookrightarrow \stratSim[1];\\
            \stratBound[1] &\hookrightarrow \Delta^1.
        \end{align*}    
    The first of these adds an extra element to $\rpstr[A]$ and $\pstr[A]$. For the second and third case let $p_0, p_1 \in \pstr[A]$ be the strata corresponding to the images of the points in $\stratBound[1]$.
    For the second case, we obtain $\pstr[B]$ by adding precisely one generating relation $r:p_0 \leq p_1$ to $\pstr[A]$. This identifies all elements of $\pstr[A]$ that are now contained in a finite ordered cycle of relations. For the third case, two generating relations $r \colon p_0 \leq p_1$ and $r^{-1} \colon p_1 \leq p_0$ are added to $\pstr[A]$. In both the second and the third case, $\rpstr[B]$ is obtained from $\rpstr[A]$ by adding one additional generating relation $\hat r$, added from $[x_0]$ to $[x_1]$, where $x_0$ and $x_1$ are the respective boundary vertices of the glued in $1$-simplex, and furthermore, one generating relation (pointing in the opposite direction) is added for every $1$-simplex, whose strata become identical in $\pstr$ after adding $r$, ($r,r^{-1}$).
    We may now check by hand that \cref{diag:pushout_of_posets} is pushout. To do this, note that pushouts in partially ordered sets are computed from elements and relations by taking a pushout of the elements in sets and taking the generating relations coming from $\rpstr[B]$ and $\pstr[A]$.
    If none of the three cases above apply, then all horizontals are isomorphisms and there is nothing to be shown.
    In the first case, \cref{diag:pushout_of_posets} is of the form
    \begin{diagram}
         \rpstr[A] \arrow[r] \arrow[d] &  \rpstr[A]  \sqcup [0]\arrow[d] \\
         \pstr[A] \arrow[r] & \pstr[A] \sqcup [0]
    \end{diagram}
    and therefore pushout. In the second and third case, the upper horizontal is surjective on elements. It follows by the explicit construction above that the pushout $ \pstr[A] \cup_{ \rpstr[A]} \cup \rpstr[B] $ may simply be constructed by adding to $\pstr[A]$, all relations of the form $\sstr[A](x) \leq \sstr[A](y)$, where $[y] \leq [x]$ is a generating relation in $\rpstr[B]$, not already present in $\rpstr[A]$. Hence, by our explicit description above, in these cases the pushout is computed by adding the relation $r:  \sstr[A](x_0) \leq \sstr[A](x_1) $, as well as one additional relation $\sstr[A](y_0) \leq \sstr[A](y_1)$, for all edges $y_0 \to y_1$, whose endpoint strata are identified after adding $r$, ($r,r^{-1}$). Note how in both cases the additional relations $\sstr[A](y_0) \leq \sstr[A](y_1)$ are redundant, by their definition. To summarize, we have presented the pushout $\pstr[A] \cup_{ \rpstr[A]} \cup \rpstr[B]$ in terms of the same generators and relations as $\pstr[B]$, which finishes this part of the proof. \\
    Let us now, conversely, show that any map $f \colon \str[A] \to \str[B]$ that induces a monomorphism of the underlying simplicial sets, and a pushout square as in the claim, lies in $\mathrm{cof}(I)$.
    Suppose that we are given a lifting diagram
    \begin{diagram}\label{diag:lift_prob_char_cof}
        \str[A] \arrow[d] \arrow[r] & \str[X] \arrow[d] \\
        \str[B] \arrow[r] & \str[Y] \spacecomma
    \end{diagram}
    with $\str[X] \to \str[Y]$ in $\mathrm{inj}(I)$.
    Note that the induced diagram 
    \begin{diagram}
         \str[A]^{\ared} \arrow[d] \arrow[r] & \str[X]^{\ared} \arrow[d] \\
        \str[B]^{\ared} \arrow[r] & \str[Y]^{\ared} \spaceperiod
    \end{diagram}
    admits a solution. Indeed, the left vertical is a cofibration in $\sStratD$ and, by \cref{lem:char_ac_fib_char_red}, the right vertical is an acyclic fibration $\sStratD$. In particular, we have a solution to the composed diagram
    \begin{diagram}
         \str[A]^{\ared} \arrow[d] \arrow[r] & \str[X]^{\ared} \arrow[d] \arrow[r] &\str[X] \arrow[d] \\
        \str[B]^{\ared} \arrow[r] \arrow[rru, dashed] & \str[Y]^{\ared} \arrow[r] &  \str[Y]  \spaceperiod
    \end{diagram}
   Now, consider the solid commutative diagram
    \begin{diagram}
            \str[A]^{\ared} \arrow[rd] \arrow[dd] \arrow[rr] &  &\str[X] \arrow[dd] \\ 
             &\str[A] \arrow[dd]  \arrow[ru ]& \\
        \str[B]^{\ared} \arrow[rruu, bend left = 30] \arrow[rd] \arrow[rr] &  \arrow[r] &  \str[Y]   \\
        & \str[B] \arrow[ru] \arrow[ruuu, dashed] & \spaceperiod
    \end{diagram}
    A diagram chase shows that the universal property of the pushout induced a dashed solution to our original lifting problem.
\end{proof}
\begin{lemma}\label{lem:char_ac_cof_red}
      In the framework of \cref{prop:ex_red_struct} and its proof, $\mathrm{cof}(I) \cap W$ is precisely the class of acyclic cofibrations in $\sStratD$ ($\sStratC$).
\end{lemma}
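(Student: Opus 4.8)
The plan is to prove the two inclusions separately, leveraging the weak factorization systems already established in the proof of \cref{prop:ex_red_struct}. First I would show that every acyclic cofibration $j$ in $\sStratD$ ($\sStratC$) lies in $\mathrm{cof}(I) \cap W$. That $j \in W$ is immediate from the characterization in \ref{thm:loc_ex_weak} together with \cref{prop:ared_pres_things}: since $(-)^{\ared}$ preserves acyclic cofibrations, $j^{\ared}$ is again an acyclic cofibration in $\sStratD$ ($\sStratC$), hence in particular a weak equivalence there, so $j \in W$. For $j \in \mathrm{cof}(I)$, by \cref{lem:char_cofib} it suffices to check that $j$ is a cofibration in $\sStratD$ ($\sStratC$) — true, as acyclic cofibrations are cofibrations — and that the naturality square of $(-)^{\ared} \to \mathrm{id}$ on $j$ is a pushout in $\sStrat$. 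Here I would use that pushouts in $\sStrat$ are detected on posets and simplicial sets: on simplicial sets the square is $\forget(j) = \forget(j)$, trivially a pushout, so the content is that the square of (refined) posets is a pushout in $\Pos$. Since $j$ is a weak equivalence in $\sStratD$ ($\sStratC$), it induces an isomorphism on posets $\pstr[A] \xrightarrow{\cong} \pstr[B]$, and by the derived-functor invariance underlying the definition of $\rpstr$ (\cref{prop:computing_der_poset_pre}, \cref{prop:explicit_rp}) it also induces an isomorphism $\rpstr[A] \xrightarrow{\cong} \rpstr[B]$; a commutative square with both horizontal maps isomorphisms is automatically a pushout.

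For the converse inclusion, suppose $j \colon \str[A] \to \str[B]$ lies in $\mathrm{cof}(I) \cap W$. By \ref{thm:loc_ex_weak}, $j^{\ared}$ is a weak equivalence in $\sStratD$ ($\sStratC$), and it is a monomorphism on simplicial sets (being a cofibration in $\sStratD$/$\sStratC$, as $\mathrm{cof}(I)$ consists of such), hence $j^{\ared}$ is an acyclic cofibration there. The last sentence of \cref{prop:ared_pres_things} then directly gives what we want, provided we know $j$ induces an isomorphism on posets — indeed that proposition asserts that a cofibration inducing an isomorphism on posets is acyclic iff its refinement is an acyclic cofibration. So the remaining point is to see that $j \in \mathrm{cof}(I)$ forces $\pstr[A] \to \pstr[B]$ to be an isomorphism. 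This is exactly where the pushout characterization of $\mathrm{cof}(I)$ from \cref{lem:char_cofib} does the work: the square with corners $\str[A]^{\ared}, \str[B]^{\ared}, \str[A], \str[B]$ is a pushout, so on posets $\pstr[B] \cong \pstr[A] \cup_{\rpstr[A]} \rpstr[B]$; combined with $j \in W$, which (again via \cref{prop:ared_pres_things} and idempotency of $(-)^{\ared}$, as in the first half) gives that $\rpstr[A] \to \rpstr[B]$ is an isomorphism, the pushout collapses to show $\pstr[A] \to \pstr[B]$ is an isomorphism.

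I would then assemble these: from $j \in \mathrm{cof}(I)$ we get $j$ a cofibration in $\sStratD$ ($\sStratC$) and $\pstr[A] \xrightarrow{\cong} \pstr[B]$; from $j \in W$ we get $j^{\ared}$ an acyclic cofibration; \cref{prop:ared_pres_things} concludes $j$ is an acyclic cofibration in $\sStratD$ ($\sStratC$). The main obstacle I anticipate is the careful bookkeeping in showing that $j \in W$ implies $\rpstr[A] \to \rpstr[B]$ is an isomorphism without circularity — one must be slightly careful that ``$j^{\ared}$ a weak equivalence in $\sStratD$ ($\sStratC$)'' already entails an isomorphism on the posets of $\str[A]^{\ared}$ and $\str[B]^{\ared}$, which are $\rpstr[A]$ and $\rpstr[B]$ by construction of the refinement — this is built into the definition of weak equivalence in the glued model structure (\cref{lem:gluing_model_struct}(1), requiring $P(f)$ an isomorphism), so no extra argument is needed, but it is the step most prone to a gap if stated loosely. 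Everything else is formal manipulation of the weak factorization systems and the already-proven characterizations in \cref{lem:char_ac_fib_char_red} and \cref{lem:char_cofib}.
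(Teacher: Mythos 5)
Your proof is correct and follows essentially the same approach as the paper's, building on \cref{prop:ared_pres_things} and \cref{lem:char_cofib} in both directions. The one small divergence is in the converse direction: the paper concludes directly by observing that $j$ is a pushout of the acyclic cofibration $j^{\ared}$ (using \cref{lem:char_cofib}) and that acyclic cofibrations are stable under pushout, whereas you first extract that $\pstr[A] \to \pstr[B]$ is an isomorphism from the pushout square together with $\rpstr[A] \cong \rpstr[B]$, and then invoke the final sentence of \cref{prop:ared_pres_things}. Both routes are valid and of comparable length; the paper's is marginally more economical since it skips the explicit poset computation. One small nit: your invocation of ``idempotency of $(-)^{\ared}$'' to get $\rpstr[A] \xrightarrow{\cong} \rpstr[B]$ is not really what's doing the work — the point is simply that a weak equivalence $j^{\ared}$ in $\sStratD$ ($\sStratC$) induces an isomorphism on posets by \cref{lem:gluing_model_struct}, and the posets of $\str[A]^{\ared}$, $\str[B]^{\ared}$ are $\rpstr[A]$, $\rpstr[B]$ by construction of the refinement; idempotency plays no role here.
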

\begin{proof}
    Suppose that $j\colon \str[A] \to \str[B]$ is an acyclic cofibration in $\sStratD$ ($\sStratC$). By \cref{prop:ared_pres_things}, it follows that $j^{\ared}$ is an isomorphism on posets. Consequently, the diagram
   \begin{diagram}
       \str[A]^{\ared} \arrow[r] \arrow[d] & \str[B]^{\ared} \arrow[d] \\
       \str[A] \arrow[r] & \str[B]
   \end{diagram}
   is pushout, which by \cref{lem:char_cofib} implies that $j$ lies in $\mathrm{cof}(I)$. Furthermore, again by \cref{prop:ared_pres_things}, we also have that $j^{\ared}$ is a weak equivalence, i.e., that $j \in W$. Now, conversely, assume that $j \in \mathrm{cof}(I) \cap W$. By the definition of $W$, it follows that $j^{\ared}$ is a weak equivalence in $\sStratD$ ($\sStratC$). As $j$ is given by a monomorphism on simplicial sets, it thus follows that $j^{\ared}$ is an acyclic cofibration in $\sStratD$ ($\sStratC$). Since, by \cref{lem:char_cofib}, the diagram 
    \begin{diagram}
       \str[A]^{\ared} \arrow[r] \arrow[d] & \str[B]^{\ared} \arrow[d] \\
       \str[A] \arrow[r] & \str[B]
   \end{diagram}
   is pushout, it follows that $j$ is also an acyclic cofibration in $\sStratD$ ($\sStratC$).
\end{proof}
\subsection{Refined abstract stratified homotopy types and layered \texorpdfstring{$\infty$}{infinity}-categories}\label{subsec:from_ref_to_ordered_qc}
Let us give an alternative description of the homotopy theory defined by categorical model structure on $\sStrat$. It turns out that it is a fully faithful subcategory of the infinity category of all small infinity categories $\iCat$.
\begin{definition}\cite{Exodromy}
Let $X \in \sSetN$ be a quasi-category. We say $X$ is \define{layered}, if the natural functor
\[
X \to P(X)
\]
is conservative. More generally, we say that an arbitrary $Y \in \sSetN$ is \define{layered}, if this holds for any fibrant replacement of $Y$ in the Joyal model structure, $\sSetJoy$. 
\end{definition}
\begin{remark}\label{rem:equ_desc_of_ordered_icat}
    In other words, a quasi-category $X \in \sSetN$ is layered if and only if each endomorphism in $X$ is an isomorphism. This has the effect that the isomorphism classes naturally carry the structure of a poset, with a relation $[x] \leq [y]$ if and only if there is a morphism $x \to y$. This poset then agrees with $P(X)$. 
\end{remark}
\begin{notation}
    We denote by $\iCatO$ the full subcategory of $\iCat$ given by the layered quasi-categories.
\end{notation}
Let us construct a model structure on $\sSetN$ corresponding to $\iCatO$.
\begin{construction}
    Denote by $E$ the quotient of $\Delta^2$ obtained by collapsing the edge $[0,2]$ and identifying the vertices $[0]$ and $[1]$. 
    In other words, we have specified the generating data for a free endomorphism, which has a right inverse. Furthermore, denote by $S^1$ the quotient of $\Delta^{1}$ by $\partial \Delta^1$. The inclusion $\Delta^1 \hookrightarrow \Delta^2$, mapping to the $[0,1]$ face, induces an inclusion of simplicial sets
                \[
                l \colon S^1 \hookrightarrow E.
                \]
    We denote by $\sSetOrd$ the left Bousfield localization of $\sSetJoy$ at $l$, which exists by \cite[Thm. 4.7]{BarwickLeftRight}. 
\end{construction}  
\begin{proposition}\label{prop:sso_pres_ocat}
    $\sSetOrd$ is a model for the $\infty$-category $\iCatO$. 
\end{proposition}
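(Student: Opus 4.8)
The plan is to identify the fibrant objects of $\sSetOrd$ with the layered quasi-categories, after which the statement follows formally from the behaviour of left Bousfield localizations. Recall that $\sSetJoy$ is a model for $\iCat$, and that --- $\sSetOrd$ being the left Bousfield localization of $\sSetJoy$ at the single cofibration $l\colon S^1\hookrightarrow E$ --- its fibrant objects are exactly the quasi-categories $X$ that are \emph{$l$-local}, i.e.\ for which the restriction map on cores $(X^E)^\simeq \to (X^{S^1})^\simeq$ is a weak homotopy equivalence. Since a left Bousfield localization of a model category presenting an $\infty$-category $\mathcal C$ presents the (reflective) full subcategory of $\mathcal C$ spanned by the local objects, and since $\iCatO$ is \emph{defined} as the full subcategory of $\iCat$ on the layered quasi-categories, it suffices to prove: a quasi-category $X$ is $l$-local if and only if it is layered.

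Next I would read off $E$ and $S^1$ as walking diagrams. Collapsing $\Delta^{\{0,2\}}$ and then identifying $[0]$ with $[1]$ in $\Delta^2$ exhibits $E$ as the simplicial set whose maps into a quasi-category $X$ are the data of an object $x\in X$, two endomorphisms $a,b\colon x\to x$, and a $2$-simplex witnessing $b\circ a\simeq 1_x$; the subcomplex $S^1=\Delta^1/\partial\Delta^1$ records only $(x,a)$, and $l$ forgets $(b,\text{witness})$. Writing $E$ explicitly as a finite colimit of simplices and using that $X^{(-)}$ sends colimits to limits, one checks that $(X^{S^1})^\simeq$ is the space of endomorphisms of $X$ and that the homotopy fibre of $(X^E)^\simeq\to(X^{S^1})^\simeq$ over an endomorphism $a\colon x\to x$ is the space of \emph{left inverses} of $a$, that is, the homotopy fibre of $(-)\circ a\colon \mathrm{Map}_X(x,x)\to\mathrm{Map}_X(x,x)$ over $1_x$. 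Thus $X$ is $l$-local precisely when every endomorphism of $X$ has a contractible space of left inverses; in particular $l$-locality implies that every endomorphism of $X$ admits at least one left inverse.

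I would then show that these conditions are equivalent to every endomorphism of $X$ being an equivalence, which by \cref{rem:equ_desc_of_ordered_icat} is exactly the condition that $X$ be layered. Working in the homotopy category $\tau(X)$, where composition is strict: if every endomorphism has a left inverse, then given $a\colon x\to x$ we may choose $b$ with $b\circ a=1_x$ and then $c$ with $c\circ b=1_x$, whence $c=c\circ(b\circ a)=(c\circ b)\circ a=a$, and therefore $a\circ b=c\circ b=1_x$; so $b$ is a two-sided inverse of $a$ in $\tau(X)$ and $a$ is an equivalence in $X$. Conversely, if every endomorphism of $X$ is an equivalence, then for an endomorphism $a$ the map $(-)\circ a$ is an autoequivalence of $\mathrm{Map}_X(x,x)$, so its homotopy fibre over $1_x$ --- the space of left inverses of $a$ --- is contractible, i.e.\ $X$ is $l$-local. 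Combining this with the first paragraph shows that $\sSetOrd$ presents the full subcategory of $\iCat$ on layered quasi-categories, namely $\iCatO$.

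The step I expect to be most delicate is the homotopy-fibre computation for $(X^E)^\simeq\to(X^{S^1})^\simeq$: presenting $E$ as a finite colimit of standard simplices, tracking what mapping out of it and passing to cores produces, and matching the outcome with the honest ``space of left inverses'' $\mathrm{hofib}((-)\circ a,1_x)$. Once that bookkeeping is in place, the categorical argument of the previous paragraph is short, and the passage from ``the fibrant objects are the layered quasi-categories'' to ``$\sSetOrd$ models $\iCatO$'' is formal, using only that $\sSetJoy$ presents $\iCat$, the general theory of left Bousfield localizations, and \cref{rem:equ_desc_of_ordered_icat}.
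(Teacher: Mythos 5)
Your proof is correct, and the two arguments coincide for the direction ``$l$-local implies layered'': both use only $\pi_0$-surjectivity of $(X^E)^\simeq\to(X^{S^1})^\simeq$, which gives every endomorphism a one-sided inverse, and then the same $c=a$ bootstrapping to upgrade to two-sided invertibility. You diverge from the paper in the converse. You identify the homotopy fibre of $(X^E)^\simeq\to(X^{S^1})^\simeq$ over an endomorphism $a$ with the space of left inverses of $a$, i.e.\ the homotopy fibre of $(-)\circ a$ on endomorphism spaces over $1_x$, and observe that this is contractible when $a$ is invertible because $(-)\circ a$ is then an autoequivalence. The paper instead notes that if $X$ is layered then every simplicial map from $E$ or $S^1$ into $X$ factors through the core $X^\simeq$, so $\sSet(A,X)^\simeq=\sSet(A,X^\simeq)$ for $A\in\{E,S^1\}$, and since $X^\simeq$ is a Kan complex and $l$ is a weak homotopy equivalence of simplicial sets, the restriction map is a weak homotopy equivalence. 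The paper's route sidesteps the fibre computation you flag as delicate --- which does require a bit of care, e.g.\ noting that $l^*\colon X^E\to X^{S^1}$ is an isofibration of quasi-categories so that passing to cores gives a Kan fibration whose strict fibre over $a$ is the relevant homotopy fibre, and then matching that with the space of left inverses --- but in exchange it relies on $l$ being a weak homotopy equivalence, which the paper proves in \cref{prop:Quillen_Equ_betw_ref_and_ord}. Both routes are valid; yours is more self-contained, the paper's is shorter once the auxiliary fact about $l$ is available.
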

\begin{proof}
    Since, $\iCatO$ is a full subcategory of $\iCat$ and $\sSetOrd$ is a left Bousfield localization of $\sSetJoy$, which models $\iCat$, we only need to show that the fibrant objects of $\sSetOrd$ are precisely the layered quasi-categories. Now, note that a quasi-category $X$ is $l$-local, if and only if the induced simplicial map
    \[
    \sSet (E,X)^{\simeq} \to \sSet(S^1,X)^{\simeq},
    \]
    where $(-)^\simeq$ denotes the maximal Kan complex contained in these quasi categories, 
    is a weak homotopy equivalence (indeed this follows from the fact that the latter Kan complexes define derived mapping spaces for $\sSetJoy$). The path components of the left-hand side correspond to (isomorphism classes of)  morphisms which have a right inverse. The path components on the right-hand side correspond to (isomorphism classes of) endomorphisms. Hence, this map being a weak equivalence implies that every endomorphism in $X$ has a right inverse. Since this also holds for the respective right inverses, it follows that every endomorphism in $X$ is an isomorphism, i.e. that $X$ is layered (\cref{rem:equ_desc_of_ordered_icat}). Conversely, if every endomorphism of $X$ is an isomorphism, then every simplicial map from $A=E,S^1$ to $X$ has image in $X^{\simeq}$. Hence, it follows (by \cite[Cor. 3.5.12.]{HigherCatCisinki} ) that \[\sSet (A,X)^{\simeq} = \sSet(A, X^{\simeq})^{\simeq} = \sSet(A, X^\simeq),\] as the middle term is already a Kan complex. It is not hard to see that $S^1 \hookrightarrow E$ is a weak homotopy equivalence of simplicial sets (see the proof of \cref{prop:Quillen_Equ_betw_ref_and_ord} below), which implies that 
    \[
    \sSet (E,X)^{\simeq} = \sSet (E,X) \to \sSet(S^1,X) = \sSet(S^1,X)^{\simeq},
    \]
    is also a weak homotopy equivalence.
\end{proof}
We may now expose $\sStratCR$ as a different model for the homotopy theory of layered $\infty$-categories. The $\infty$-categorical version of this statement was already proven in \cite[\nopp 2.3.8]{Exodromy}. Here is the model-categorical version of this statement:
\begin{theorem}\label{prop:Quillen_Equ_betw_ref_and_ord}
    The adjunction 
        \begin{align*}
          \lstr \colon \sSetN  & \rightleftharpoons  \sStratN \colon \forget
        \end{align*}
    induces a Quillen equivalence between $\sSetOrd$ and $\sStratCRN$.
\end{theorem}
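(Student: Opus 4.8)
The plan is to verify the three conditions of a Quillen equivalence: that $(\lstr \dashv \forget)$ is a Quillen pair, that it is a Quillen equivalence (e.g. by checking that $\lstr$ reflects weak equivalences between cofibrant objects and that the derived counit is a weak equivalence on fibrant objects), using throughout the explicit descriptions of the model structures already established. First I would check the Quillen pair condition. By \cref{prop:c_is_left_bous_of_d_ref}, the category $\sStratCR$ is the left Bousfield localization of $\sStratDR$ at the stratified inner horn inclusions, and dually $\sSetOrd$ is the left Bousfield localization of $\sSetJoy$ at $l \colon S^1 \hookrightarrow E$. It is more convenient, though, to use the characterization of cofibrations and acyclic cofibrations in $\sStratCR$ directly: by \cref{prop:ex_red_struct}\ref{thm:loc_ex_cof} the cofibrations in $\sStratCR$ are the monomorphisms $j$ whose refinement square is a pushout, and $\lstr$ sends a monomorphism to a monomorphism whose refinement square is trivially a pushout (since $\lstr(K)$ has $P(K) = \rpstr[\lstr(K)]$ as its poset — i.e. $\lstr(K)$ is already refined, being the image of the left adjoint to posetification composed with the discrete stratification, so $\lstr(K)^{\ared} = \lstr(K)$). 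Hence $\lstr$ preserves cofibrations. For acyclic cofibrations, by \cref{prop:ex_red_struct} these coincide with the acyclic cofibrations of $\sStratC$, so it suffices to show $\lstr$ is left Quillen from $\sSetOrd$ to $\sStratC$; since the generating acyclic cofibrations of $\sSetOrd$ can be taken to be the stratified-irrelevant inner horn inclusions together with (the relevant pushout-products with) $l$, one checks $\lstr$ sends inner horn inclusions to stratified inner horn inclusions (acyclic in $\sStratC$) and sends $l$ to the refinement-type map $\lstr(S^1) \to \lstr(E)$, which is an acyclic cofibration in $\sStratC$ because $E$ and $S^1$ have the same posetification ($P(S^1) = P(E) = [0]$) and $S^1 \hookrightarrow E$ is a Joyal-equivalence onto... — here one must be slightly careful: $S^1 \hookrightarrow E$ is \emph{not} a Joyal equivalence, but $\lstr$ of it becomes a weak equivalence in $\sStratC$ precisely because $\sStratC$ (hence $\sSetOrd$ via the equivalence) has inverted it; so the cleanest route is to invoke \cref{lem:gluing_model_struct}-style reasoning or simply note that $\lstr$ is left Quillen iff it sends the generators of acyclic cofibrations of the localized structure to weak equivalences, which holds by construction of $\sSetOrd$ as the localization at $l$.

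Next I would show the Quillen pair is a Quillen equivalence. The efficient approach: since both model structures are left Bousfield localizations with the same underlying adjunction being a Quillen \emph{equivalence} already at the pre-localized... — no, that is false too, as $\sStratC$ is much bigger than $\sSetJoy$. So instead I would argue directly. Every object of $\sSetOrd$ is cofibrant; a cofibrant replacement in $\sStratCR$ of $\lstr(K)$ is $\lstr(K)$ itself (it is refined, hence cofibrant by \cref{prop:ex_red_struct}\ref{thm:loc_ex_cof}). So it suffices to check: (i) for $K \in \sSetN$ and a fibrant replacement $\lstr(K) \hookrightarrow \str$ in $\sStratCR$, the composite $K \to \forget(\lstr K) \to \forget(\str)$ is a weak equivalence in $\sSetOrd$; and (ii) for $\str$ fibrant in $\sStratCR$ the counit $\lstr \forget (\str) \to \str$ is a weak equivalence in $\sStratCR$. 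For (ii): a fibrant object $\str$ of $\sStratCR$ is, by \cref{rem:0-connected_vs_ref_present} and \cref{rem:0-connected_vs_ref}, a layered quasi-category $\ustr$ together with the conservative functor $\ustr \to \pstr$ which (since $\str$ is cofibrant, i.e. refined) is exactly $\ustr \to P(\ustr)$; thus $\forget(\str) = \ustr$ is a layered quasi-category, and $\lstr(\ustr) = (\ustr \to P(\ustr)) = \str$, so the counit is literally an isomorphism on fibrant-cofibrant objects. This is the crux and makes (ii) trivial once the identification of fibrant-cofibrant objects is in place. For (i): $\forget(\str)$ is a layered quasi-category (fibrant in $\sSetOrd$ by \cref{prop:sso_pres_ocat}), and $K \to \forget(\str)$ is a weak equivalence in $\sSetOrd$ iff it is a local equivalence; since $\lstr(K) \to \str$ is a categorical equivalence in $\sStratCR$ over an isomorphism of (refined) posets — actually it is an acyclic cofibration — \cref{prop:char_of_ref_cat_equ} (after applying $\Exi$ so strata are Kan complexes, or directly since everything here is a quasi-category) identifies this with the underlying map $K \to \forget(\str)$ being a categorical equivalence of simplicial sets; and a categorical equivalence between a simplicial set and a layered quasi-category, after Joyal-fibrant replacement, stays within the layered locus (since $P$ sends categorical equivalences to isomorphisms, \cref{lem:pos_of_cat_eq}), so it is an $\sSetOrd$-local equivalence. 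Hence (i) holds.

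The main obstacle I anticipate is the careful bookkeeping around the map $l \colon S^1 \hookrightarrow E$ and its image under $\lstr$: one must confirm that $\lstr(S^1) \to \lstr(E)$ is an acyclic cofibration in $\sStratCR$ (equivalently in $\sStratC$), which amounts to checking that collapsing an endomorphism-with-right-inverse to an isomorphism is already forced by the inner-horn localization \emph{together with} the refinement localization. Concretely: $E$ is built from $\Delta^2$ by collapsing $[0,2]$ and gluing $[0]\sim[1]$; its posetification is $[0]$, so $\lstr(E)$ has a single stratum, and within a single stratum the categorical model structure on $\sStrat$ restricted to that fiber is the Joyal-localized-at-"make-everything-invertible" structure, i.e. the Kan model structure — in which $S^1 \hookrightarrow E$ \emph{is} a weak equivalence (both are models for $S^1$, or rather both become contractible... — here I would double-check whether $E$ is weakly contractible or weakly $S^1$; in any case the map is a weak homotopy equivalence, as the paper asserts in the proof of \cref{prop:sso_pres_ocat}). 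So the fiberwise argument via \cref{lem:gluing_model_struct} / \cref{prop:basechange} settles it. I expect the rest to be routine given \cref{prop:ex_red_struct}, \cref{prop:char_of_ref_cat_equ}, \cref{rem:0-connected_vs_ref_present}, and \cref{lem:pos_of_cat_eq}.
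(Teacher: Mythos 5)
Your proof is correct in its essentials and follows broadly the same strategy as the paper's (establish the Quillen pair via the universal property of the left Bousfield localization, noting $\lstr K$ is refined and $\lstr(l)$ is a weak homotopy equivalence over the single stratum $[0]$; then reduce the Quillen-equivalence condition to bifibrant objects), but you organize the second half differently: you check derived unit and derived counit separately, with the crux being the explicit observation that on bifibrant objects of $\sStratCR$ the counit $\lstr\forget\str \to \str$ is literally an isomorphism. The paper instead verifies the single biconditional criterion (for cofibrant $X$, fibrant $\str[Y]$: $\lstr X \to \str[Y]$ is a weak equivalence iff its adjoint is) and pushes the whole computation through the colocal Whitehead theorem to reduce from $\sStratCR$ to $\sStratC$, then invokes the known characterization of weak equivalences in $\sSetPC$ between fibrant objects. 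Your "counit is an isomorphism on bifibrant objects" observation is an appealingly concrete way to see why the equivalence holds; it buys directness at the cost of being stated only for \emph{bi}fibrant $\str$, whereas the derived-counit criterion requires all \emph{fibrant} $\str$ — this is easy to patch (for fibrant $\str$, the strata are Kan so $\rpstr = P(\ustr)$ by \cref{prop:computing_der_poset}, hence $\lstr\forget\str = \str^{\ared}$, and $\str^{\ared}\to\str$ is always a weak equivalence by \cref{lem:cof_rep_in_ref}), but you should say so. Two smaller inaccuracies: your aside that the generating acyclic cofibrations of $\sSetOrd$ are "the inner horn inclusions together with $l$" is not true (left Bousfield localizations do not have such explicit generators), though you abandon it in favor of the universal-property argument, which is what the paper uses; and in (i) the parenthetical "or directly since everything here is a quasi-category" is not available, since $K$ and hence the strata of $\lstr K$ need not be Kan — the $\Exi$-replacement route is the one that works.
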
   
\begin{proof}
    We begin by showing that $\lstr$ is left Quillen. It follows immediately from the construction of $\sStratCRN$ that $\lstr$ is left Quillen as a functor with domain $\sSetJoy$. For cofibrations, this follows by construction. For acyclic cofibrations, note that any categorical equivalence $X \to Y$ induces a morphism $\lstr (X) \to \lstr(Y)$ that is an isomorphism on posets $P(X) \to  P(Y)$. In particular, by definition of the model structure on $\sStratCN$, $\lstr (X) \to \lstr(Y)$ is a weak equivalence. 
    Now, consider $\lstr(l) \in \sStratN$. On the poset level, $\lstr(l)$ is given by the identity on the poset $[0]$. It follows, by construction of $\sStratCN$ that $\lstr(l)$ is a weak equivalence if and only if it is a weak homotopy equivalence of (trivially stratified) simplicial sets. Indeed, $l$ is given by the pushout 
    \begin{diagram}
        \Delta^1 \arrow[r] \arrow[d] & (\Delta^2)/\Delta^{[0,2]} \arrow[d] \\
        S^1 \arrow[r] & E.
    \end{diagram}
    The upper horizontal is an acyclic cofibration in the Kan-Quillen model structure on simplicial sets, hence so is $l$. By the universal property of the left Bousfield localization, it follows that $\lstr \colon \sSetOrd \to \sStratN$ is indeed left Quillen. It remains to show that $\lstr \dashv \forget$ is a Quillen equivalence. Let $\str[Y] \in \sStratCRN$ be fibrant (i.e. $\sstr[Y] \colon \ustr[Y] \to \pstr[Y]$ a conservative functor of quasi-categories) and $X \in \sSetN$. Consider a morphism 
    \[
    f \colon \lstr (X) \to \str[Y]
    \]
    and its adjoint
    \[
    g \colon X \to \ustr[Y].
    \]
    We need to show that $f$ is a weak equivalence, if and only if $g$ is a weak equivalence. 
    By replacing $X$ and $\str[Y]$ fibrantly and cofibrantly, respectively, we may without loss of generality assume that $X$ and $\str[Y]$ are bifibrant.
    Note that by definition of the model structure on $\sSetOrd$, $\lstr$ sends fibrant objects in $\sSetOrd$ (i.e. layered infinity categories) to fibrant objects in $\sStratCRN$. Similarly, as every object in $\sSetOrd$ is cofibrant, $\forget$ preserves cofibrant objects. It follows by the construction of $\sStratCRN$ as a right Bousfield localization (and the Whitehead theorem for Bousfield localizations \cite[Thm. 3.2.13]{hirschhornModel}), that $f$ is a weak equivalence in $\sStratCRN$, if and only if it is a weak equivalence in $\sStratCN$. Hence, we may without loss of generality, assume that $f$ is the identity on posets, for $P= \pstr[Y]$. Thus, from \cref{recol:haine_mod_cat} it follows that $f$ is a weak equivalence, if and only if the underlying map of simplicial sets (which is $g$) is a categorical equivalence. Finally, again using the local Whitehead theorem, $g$ is a categorical equivalence if and only if it is a weak equivalence in $\sSetOrd$.
\end{proof}
\begin{remark}
    Denote by $\AbStr^{\ared}$ the full coreflective subcategory of $\AbStr$ given by refined abstract stratified homotopy types. Then it follows from \cref{prop:Quillen_Equ_betw_ref_and_ord} that the forgetful functor $\sStratN \to \sSetN$ induces a fully faithful reflective embedding
    \[
    \AbStr^{\ared} \hookrightarrow \iCat
    \]
    with essential image the full subcategory of layered $\infty$-categories $\iCatO$.
\end{remark}
\subsection{Homotopy links for the global stratified setting}
Weak equivalences in $\sSetPD$ can be detected entirely in terms of generalized homotopy links. The question arises whether we can make a similar argument in the global scenario. To do so, we first need a global version of the homotopy link.
\begin{definition}\label{def:ext_homotopy_link}
    For $n \in \mathbb N$ and $\str \in \sStratN$, we denote 
       \[  \AltHolIPS(\str) := \sStrat(\Delta^{[n]}, \str) \]
    and call this simplicial set the $n$-th extended homotopy link of $\str$.
\end{definition}
\begin{remark}
    Note that we may decompose the extended homotopy link into two parts:
    \[
    \AltHolIPS(\str)= \bigsqcup_{\I \in (NP)_n, \I \textnormal{ n.d.} } \HolIPS (\str) \sqcup \bigsqcup_{\J \in (\nerve P)_n, \J \textnormal{ d. }} \sSetP[{\pstr}]( \Delta^\J, \str),
    \]
    where the left-hand union ranges over regular flags, and the right-hand union over degenerate flags.
    Now if $\J$ degenerates from a regular flag $\I$ of $\pstr$, then 
    $\Delta^\J$ and $\Delta^\I$ are stratum-preserving homotopy equivalent. It follows that 
    $\sSetP[{\pstr}]( \Delta^\J, \str)$ is naturally homotopy equivalent to $\sSetP[{\pstr}](\Delta^\I,\str) = \HolIPS (\str)$. In other words, $\AltHolIPS$ carries a lot of homotopy-theoretically redundant data, which is already contained in links of lower dimension. This extra data is only necessary to make $\AltHolIPS$ functorial in morphisms that do not induce injections on the poset level.
\end{remark}
Extended homotopy links turn out to create weak equivalences in $\sStratDR$. To see this, note first that:
\begin{proposition}\label{prop:hol_detect_actual_poset}
    A stratified simplicial map $f \colon \str \to \str[Y] \in \sStratN$ that induces isomorphisms on $\pi_0 \AltHolIPS[0]$ and $\pi_0 \AltHolIPS[1]$ induces an isomorphism $\rpstr \to \rpstr[Y]$.
\end{proposition}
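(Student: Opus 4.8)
The plan is to read everything off the explicit description of the refined poset in \cref{prop:explicit_rp}. First I would identify what the two links compute. A morphism $\Delta^{[0]}\to\str$ is a vertex of $\ustr$ (together with its forced image in $\pstr$), and a $1$-simplex of $\AltHolIPS[0](\str)$, that is, a map $\Delta^{[0]}\otimes\Delta^1\to\str$, is an edge of $\ustr$ all of whose vertices lie in one stratum; hence $\AltHolIPS[0](\str)\cong\coprod_{p\in\pstr}\ustr_p$, and $\pi_0\AltHolIPS[0](\str)$ is the set of path components of non-empty strata of $\str$, which by \cref{prop:explicit_rp} is exactly the underlying set of $\rpstr$. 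Moreover, under this identification $\pi_0\AltHolIPS[0](f)$ is the underlying map of sets of the induced poset morphism $\rpstr[f]\colon\rpstr\to\rpstr[Y]$ (it sends the class of $x$ to the class of $f(x)$, again by \cref{prop:explicit_rp}), so the first hypothesis says precisely that $\rpstr[f]$ is a bijection on underlying sets.

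Next I would analyse $\AltHolIPS[1]$. Since every edge $a\to b$ of $\ustr$ automatically has $\sstr(a)\le\sstr(b)$, a morphism $\Delta^{[1]}\to\str$ is just an arbitrary edge of $\ustr$, carrying the unique compatible map $[1]\to\pstr$. A $1$-simplex of $\AltHolIPS[1](\str)$ is a map $\Delta^{[1]}\otimes\Delta^1\to\str$; since the stratification of $\Delta^{[1]}\otimes\Delta^1$ depends only on the $\Delta^{[1]}$-coordinate, this is a square in $\ustr$ whose edge $\Delta^{\{0\}}\otimes\Delta^1$ and whose edge $\Delta^{\{1\}}\otimes\Delta^1$ each lie in a single stratum of $\str$. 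Chaining the squares of a connecting zigzag, I conclude: if two edges of $\ustr$ lie in the same path component of $\AltHolIPS[1](\str)$, then their sources lie in a common stratum of $\str$ and are joined by a path in that stratum, and likewise for their targets.

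The argument would then run as follows. The map $\rpstr[f]$ is order preserving (being a morphism of posets) and a bijection on underlying sets, so it suffices to prove that its inverse is order preserving. By \cref{prop:explicit_rp}, the order on $\rpstr[Y]$ is the transitive closure of the relations $[b]\le[b']$ coming from edges $b\to b'$ of $\ustr[Y]$, so, by transitivity of the order on $\rpstr$, it is enough to show that each such generating relation is reflected. Fix an edge $e\colon b\to b'$ of $\ustr[Y]$. By surjectivity of $\pi_0\AltHolIPS[1](f)$ there is an edge $\tilde e\colon a\to a'$ of $\ustr$ with $f(\tilde e)$ in the same path component of $\AltHolIPS[1](\str[Y])$ as $e$. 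By the previous step, $f(a)$ and $b$ lie in a common stratum of $\str[Y]$, joined by a path in it, so $[f(a)]=[b]$ in $\rpstr[Y]$, and likewise $[f(a')]=[b']$. Hence $\rpstr[f]([a])=[b]$ and $\rpstr[f]([a'])=[b']$, so $[a]$ and $[a']$ are the $\rpstr[f]$-preimages of $[b]$ and $[b']$; and $[a]\le[a']$ in $\rpstr$ since $\tilde e$ is an edge from $a$ to $a'$. Thus the inverse of $\rpstr[f]$ preserves generating relations, hence all relations, and $\rpstr[f]$ is an isomorphism of posets.

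I expect the only delicate point to be the bookkeeping in the middle step: reading off from $\Delta^{[1]}\otimes\Delta^1$ that a $1$-simplex of the extended homotopy link moves the source and the target of an edge only within single strata. After that the proof is a short chase. Note that only surjectivity — not bijectivity — of $\pi_0\AltHolIPS[1](f)$ is actually used, while bijectivity of $\pi_0\AltHolIPS[0](f)$ is needed both to know $\rpstr[f]$ is injective (so preimages make sense) and to know it is surjective on underlying sets.
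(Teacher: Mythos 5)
Your proof is correct and takes essentially the same approach as the paper: read off the underlying set of $\rpstr$ from $\pi_0\AltHolIPS[0]$ and the generating relations from $\pi_0\AltHolIPS[1]$ via the explicit description in \cref{prop:explicit_rp}. Your write-up is more careful than the paper's one-line identification (and usefully notes that only surjectivity of $\pi_0\AltHolIPS[1](f)$ is needed to reflect generating relations, once bijectivity on underlying sets is in hand), but the underlying idea is the same.
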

\begin{proof}
    Consider the explicit construction of $\rpstr$ in terms of elements and relations in \cref{prop:explicit_rp}. The elements correspond precisely to the elements of $\pi_0 \AltHolIPS[0] (\str)$. The generating relations correspond precisely to the elements of $\pi_0 \AltHolIPS[1] (\str)$ (with components of degenerate flags corresponding to equalities). Hence, the result follows.
\end{proof}
We may then show:
\begin{proposition}\label{prop:equ_char_of_ref_diag_equ}
    A stratified simplicial map $f \colon \str \to \str[Y]$ is a diagrammatic equivalence if and only if it induces weak homotopy equivalences on all extended homotopy links $\AltHolIPS[n]$, for $n \geq 0$.
\end{proposition}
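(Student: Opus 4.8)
The plan is to transport the statement to the setting of a fixed poset, where it becomes the characterization of weak equivalences in $\sSetPD$ by (generalized) homotopy links recalled at the beginning of this subsection, and to bridge the two descriptions via the refinement functor.

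The one genuinely new ingredient I would need is that the extended homotopy links are insensitive to refinement. First I would observe that $\Delta^{[n]}$ is a refined stratified simplicial set: by \cref{prop:explicit_rp} its refined poset has one element per (non-empty, path-connected, single-vertex) stratum and a generating relation $i \leq j$ precisely when there is an edge $i \to j$, so $\rpstr[{\Delta^{[n]}}] = [n] = \pstr[{\Delta^{[n]}}]$; and since $- \otimes K$ acts as $- \times \pi_0(K)$ on posets and refined posets alike (as in the proof of \cref{prop:ex_red_struct}), the tensors $\Delta^{[n]} \otimes \Delta^m$ are refined as well. By the coreflection adjunction of \cref{prop:refinement_is_ra}, postcomposition with the counit then yields, for every $\str \in \sStratN$, a natural isomorphism of simplicial sets
\[
\AltHolIPS[n](\str^{\ared}) \;\xrightarrow{\ \cong\ }\; \AltHolIPS[n](\str).
\]
In particular $\AltHolIPS[n](f)$ is a weak homotopy equivalence if and only if $\AltHolIPS[n](f^{\ared})$ is one. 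Combined with \ref{thm:loc_ex_weak} --- $f$ is a diagrammatic equivalence exactly when $f^{\ared}$ is a poset preserving diagrammatic equivalence in $\sStratD$ --- this reduces the proposition to proving, for $f^{\ared} \colon \str^{\ared} \to \str[Y]^{\ared}$ between \emph{refined} objects, that $f^{\ared}$ is a poset preserving diagrammatic equivalence if and only if it induces weak homotopy equivalences on all $\AltHolIPS[n]$.

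For the ``only if'' direction I would use that a poset preserving diagrammatic equivalence is an isomorphism on posets, so one may assume $f^{\ared}$ is the identity on a poset $\pos$, i.e.\ a weak equivalence in $\sSetPD$. Using the decomposition
\[
\AltHolIPS[n](\str) = \bigsqcup_{\I \textnormal{ n.d.}} \HolIPS(\str) \;\sqcup\; \bigsqcup_{\J \textnormal{ d.}} \sSetP[\pos](\Delta^{\J},\str)
\]
recorded above, which $f^{\ared}$ respects summand by summand, the homotopy-link characterization of $\sSetPD$-weak equivalences makes $f^{\ared}$ a weak homotopy equivalence on each non-degenerate summand $\HolIPS[\I](\str)$; each degenerate summand is naturally homotopy equivalent to the non-degenerate link it degenerates from, so $f^{\ared}$ is a weak homotopy equivalence there too, and hence on all of $\AltHolIPS[n]$. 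For the ``if'' direction I would first invoke \cref{prop:hol_detect_actual_poset}: since $\AltHolIPS[0](f^{\ared})$ and $\AltHolIPS[1](f^{\ared})$ are weak homotopy equivalences, hence $\pi_0$-bijections, $f^{\ared}$ induces an isomorphism $\rpstr[{\str^{\ared}}] \to \rpstr[{\str[Y]^{\ared}}]$, which for refined objects is an isomorphism of posets; so again I may assume $f^{\ared}$ is the identity on a poset $\pos$. A weak homotopy equivalence of a disjoint union restricts to one on each clopen summand, so $\HolIPS[\I](f^{\ared})$ is a weak homotopy equivalence for every regular flag $\I$ of $\pos$ (of arbitrary length), whence $f^{\ared}$ is a weak equivalence in $\sSetPD$ by the homotopy-link characterization, i.e.\ a poset preserving diagrammatic equivalence; by \ref{thm:loc_ex_weak}, $f$ is a diagrammatic equivalence.

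The load-bearing step is the refinement-invariance of the extended homotopy links, which rests entirely on checking that $\Delta^{[n]}$ and its simplicial tensors are refined; once that is in place, the rest is bookkeeping with the decomposition of $\AltHolIPS[n]$ and the already-established homotopy-link characterization over a fixed poset. The two minor points requiring care are that the degenerate-flag summands must be shown to inherit the weak equivalence in the ``only if'' direction --- this is exactly where their natural homotopy equivalence to lower-dimensional non-degenerate links is used --- and that \cref{prop:hol_detect_actual_poset} only needs $\pi_0$-isomorphisms in dimensions $0$ and $1$, which weak homotopy equivalences certainly supply.
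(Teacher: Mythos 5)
Your proposal is correct and follows essentially the same route as the paper's proof: invoke \cref{prop:refinement_is_ra} to get refinement-invariance of the extended homotopy links, use \ref{thm:loc_ex_weak} (or equivalently colocality) to reduce to refined objects, apply \cref{prop:hol_detect_actual_poset} to reduce to a map that is the identity on posets, and then conclude via the decomposition of $\AltHolIPS[n]$ into non-degenerate homotopy links and spaces homotopy equivalent to lower-dimensional ones. You spell out some steps the paper leaves implicit --- in particular, that $\Delta^{[n]}$ and its tensors $\Delta^{[n]} \otimes \Delta^m$ are refined, which is the actual reason $\AltHolIPS[n]$ inverts refinement morphisms, and the summand-by-summand bookkeeping in both directions --- but these are elaborations, not a different argument.
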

\begin{proof}
   It follows from \cref{prop:refinement_is_ra}, that $\AltHolIPS[n]$ sends the refinement morphisms $\str^{\ared} \to \str$ into isomorphisms. Consequently, by the characterization of refined diagrammatic equivalences in \cref{prop:ex_red_struct}, we may, without loss of generality, assume that $\str$ and $\str[Y]$ are refined. By \cref{prop:hol_detect_actual_poset}, under both assumptions that we want to show are equivalent the induced map $\pstr= \rpstr \to \rpstr[Y] = \pstr[Y]$ is an isomorphism. Hence, we may, without loss of generality, assume that it is given by the identity. Since $\str$ and $\str[Y]$ are colocal objects with respect to the right Bousfield localization defining $\sStratDR$, $f$ is a diagrammatic equivalence if and only if it is a poset-preserving diagrammatic equivalence. In particular, this is the case if and only if $f$ induces weak equivalences on all homotopy links. Since the extended homotopy links are given as coproducts of all homotopy links and spaces naturally weakly equivalent to the latter, it follows that $f$ is poset-preserving diagrammatic equivalence if and only if it induces an equivalence on extended homotopy links.
\end{proof}
\begin{remark}
    We may rephrase \cref{prop:equ_char_of_ref_diag_equ} in the sense that the weak equivalences on $\sStratDR$ are transported from weak equivalences of bisimplicial sets (interpreted as simplicial presheaves on $\Delta$), under the functor into bisimplicial sets
    \begin{align*}
    \AltHolIPS[] \colon \sStratN \to \bisSet 
    \end{align*}
    induced by the functoriality of $\AltHolIPS$ in $n$. This justifies the name diagrammatic equivalences, as these equivalences are created by the diagram of extended homotopy links. It seems plausible that $\sStratDR$ is Quillen equivalent to a localization of $\bisSet$ equipped with the Reedy model structure. This is also supported by the fact that $\sStratCR$ is a left Bousfield localization of $\sStratDR$, which is equivalent to $\sSetOrd$, whose homotopy theory may in turn be presented as a left Bousfield localization of complete Segal spaces.
\end{remark}

\subsection{Stratified mapping spaces}\label{subsec:strat_mapping_spaces}
Give two layered $\infty$-categories $X$ and $Y$, the $\infty$-category of functors $Y^X$ is itself layered. Indeed, it follows from the fact that isomorphisms of functors are detected pointwise that it even suffices for $Y$ to be layered. Similarly, in the world of topological stratified spaces (more specifically homotopically stratified spaces) \cite{HughesPathSpaces} equipped the space of stratified maps with a natural decomposition (which generally may not be a stratification) and investigated the lifting properties of such mapping spaces. In \cite{nand2019simplicial}, the author refined the topology on these mapping spaces in order to obtain internal mapping spaces, at least for stratified spaces with non-empty strata. Hence, it is not surprising that the homotopy theories on stratified simplicial sets defined in this paper admit a notion of stratified mapping space. In other words, in this subsection we prove that all of the model structures on $\sStratN$ presented in this paper are cartesian closed (\cref{thm:cartesian_closure}). Let us begin with the corresponding statement on $1$-categories.
\begin{proposition}\label{prop:cartesian_closed}
    $\sStratN$ is a cartesian closed category. 
\end{proposition}
\begin{proof}
    Recall first that the category $\Pos$ is also cartesian closed. Given two posets $\pos, \pos[Q]$, the inner hom $\pos^{\pos[Q]}$ is obtained by equipping $\Pos(\pos, \pos[Q])$ with the poset structure given by 
    \[
    f \leq g \iff \forall p \in \pos \colon f(p) \leq g(p),
    \]
    for $f,g \in \Pos(\pos, \pos[Q])$. Next, note that the adjunction $\pos (-) \dashv \nerve (-)$ between simplicial sets and posets has the property that the left adjoint preserves finite products. It follows by an easy application of the Yoneda lemma that there is a natural isomorphism
    \[
    \nerve(\pos)^{\nerve(\pos[Q])} \to \nerve( \pos^{\pos[Q]}) .
    \]
    On vertices, it is simply given by the identification $\sSetN( \nerve (\pos), \nerve(\pos[Q])) \cong \Pos ( \pos, \pos [Q])$, which under the adjunction $\pos \dashv \nerve$ entirely describes the map. Now, let $\str,\str[Y] \in \sStratN$. We construct the exponential object $\tstr[Y]^{\tstr}$, that is, we construct a stratified simplicial set $\tstr[Y]^{\tstr}$ together with a natural isomorphism $\sStratN( - \times \str, \str[Y]) \cong \sStratN( -,\tstr[Y]^{\tstr})$.
    Consider the pullback diagram of simplicial sets
    \begin{diagram}\label{diag:construction_inner_stratified_hom}
        \ustr[Y]^{\ustr} \times_{ \nerve(\pstr[Y])^{\ustr}} \nerve(\pstr[Y])^{ \nerve(\pstr)} \arrow[r] \arrow[d] & \arrow[d]\ustr[Y]^{\ustr} \\
       \nerve( \pstr[Y]^{\pstr}) \cong \nerve(\pstr[Y])^{ \nerve(\pstr)} \arrow[r] & \nerve(\pstr[Y])^{\ustr}.
    \end{diagram}
    The left-hand side defines a stratified simplicial set over the poset $\pstr[Y]^{\pstr}$.
    Let us denote the latter by $\tstr[Y]^{\tstr}$.
    A morphism from a stratified simplicial set $\tstr[Z]$ into this stratified simplicial set corresponds to the data of a morphism $\nerve(\pstr[Z]) \to \nerve(\pstr[Y]^{\pstr})$, together with a morphism $\utstr[Z] \to \utstr[Y]^{\utstr[X]}$ making the induced diagram
        \begin{diagram}
       \ustr[Z] \arrow[rr] \arrow[d] & & \ustr[Y]^{\ustr} \arrow[d] \\
       \nerve(\pstr[Z]) \arrow[r]&\nerve( \pstr[Y]^{\pstr}) \cong \nerve(\pstr[Y])^{ \nerve(\pstr)} \arrow[r] & \nerve(\pstr[Y])^{\ustr}.
    \end{diagram}
    commute. Using the cartesian structure of $\sSetN$, this in turn specifies the same data as a commutative diagram
    \begin{diagram}{}
        \ustr[Z] \times \ustr \arrow[d, "{\sstr[Z] \times \sstr}"] \arrow[r] & \ustr[Y] \arrow[d, "{\sstr[Y]}"] \\
        \nerve( \pstr[Z] \times \pstr)\cong \nerve( \pstr[Z]) \times \nerve (\pstr[X]) \arrow[r] & \pstr[Y]
    \end{diagram}
    that is of a morphism $\str[Z] \times \str \to \str[Y]$. The naturality of the thus constructed bijection
    \[
    \sStratN (\str[Z] \times \str, \str[Y]) \cong \sStratN(\str[Z], \tstr[Y]^{\tstr})
    \]
    shows that $\tstr[Y]^{\tstr}$ defines the required exponential object. 
\end{proof}
\begin{lemma}\label{lem:product_pres_equ}
     The functor $- \times - \colon \sStratN \to \sStratN$ preserves (poset-preserving) diagrammatic and (poset-preserving) Joyal-Kan equivalences.  
\end{lemma}
\begin{proof}
    Clearly, if $f \colon \str \to \str[Y]$ induces an isomorphism on posets, then so does every product $f \times 1_{\str[Z]}$, for $\str[Z] \in \sStratN$. Thus, it follows from \cref{prop:names_make_sense} that it suffices to show the diagrammatic and the Joyal-Kan case, and the poset-preserving versions follow from the latter. 
    The case of diagrammatic equivalences is immediate from \cref{prop:equ_char_of_ref_diag_equ}, which states that the functor $\AltHolIPS[] \colon \bisSet$ creates weak equivalences and the fact that the latter commutes with products. Finally, for the Joyal-Kan case, suppose that $f \colon \str \to \str[Y]$ is a Joyal-Kan equivalence and consider the following induced commutative diagram:
    \begin{diagram}
        \str \times \str[Z] \arrow[r] \arrow[d] & \Ex^{\infty}\str \times  \Ex^{\infty} \str[Z] \arrow[d]    \\ 
        \str[Y] \times \str[Z] \arrow[r] &    \Ex^{\infty}\str[Y] \times  \Ex^{\infty} \str[Z]  \spaceperiod
    \end{diagram}
    Since the natural transformation $1 \to \Ex^{\infty}$ is a poset-preserving diagrammatic equivalence, it follows from the diagrammatic case that both horizontals are diagrammatic and thus also Joyal-Kan equivalences. Hence, by two-out-of-three, we only need to show that the right vertical is a Joyal-Kan equivalence.
    By \cref{prop:char_of_ref_cat_equ}, using the fact that a product of Kan complexes is a Kan complex, it follows that the right vertical is a Joyal-Kan equivalence if and only if the underlying simplicial map is a categorical equivalence. This map is given by the product of the underlying simplicial maps of $1_{\Ex^{\infty}\str[Z]}$ and $\Ex^{\infty}(f)$. By assumption, and since $\Ex^{\infty}$ preserves Joyal-Kan equivalences, $\Ex^{\infty}(f)$ is a Joyal-Kan equivalence. We may again apply \cref{prop:char_of_ref_cat_equ}, from which it follows that the underlying simplicial map of $\Ex^{\infty}(f)$ is a categorical equivalence. Thus, the claim follows from the fact that products in $\sSetN$ preserve categorical equivalences.
\end{proof}
\begin{lemma}\label{lem:suff_to_pres_prod}
    Given a model category $\cat[M]$, suppose that the product functor $- \times - \colon \cat[M] \times \cat[M] \to \cat[M]$ preserves colimits and weak equivalences in both arguments, and also is such that for any pair of cofibrations $i \colon A \to B$ and $j\colon A' \to B'$ the induced morphism 
    \[
    i \boxtimes j \colon A \times B' \cup_{A \times A'} B \times A' \to  B \times B'
    \]
    is a cofibration. Then $- \times -$ is a Quillen bifunctor.
\end{lemma}
\begin{proof}
    We need to show that, given two cofibrations as in the statement of the lemma, if (without loss of generality) $i$ is additionally a weak equivalence, then so is $i \boxtimes j$. Consider the diagram 
    \begin{diagram}
        A \times A' \arrow[d] \arrow[r]& B \times A' \arrow[d] & \\
        A \times B' \arrow[r] & A \times B' \cup_{A \times A'} B \times A' \arrow[r] & B \times B' .
    \end{diagram}
    By assumption, the upper horizontal and lower horizontal compositions are weak equivalences. Since, in addition to this, the upper horizontal is a cofibration and the square is pushout, it follows that its parallel is also a weak equivalence. Hence, by two out of three, so is the right lower horizontal.
\end{proof}
\begin{theorem}\label{thm:cartesian_closure}
    Let $\sStratN$ be equipped with any of the model structures of \cref{subsec:from_local_to_global,subsec:refining_strat_sset}. Then $\sStratN$ is a cartesian closed model category.
\end{theorem}
\begin{proof}
    We need to show that the map from the initial object $\emptyset \in \sStratN$ to the terminal objects $\star \in \sStratN$ is a cofibration and, furthermore, that $- \times - \colon \sStratN \times \sStratN \to \sStratN$ is a Quillen bifunctor. 
    The former statement holds, since $(\emptyset \to \star) \cong (\partial \Delta^{[0]} \hookrightarrow \Delta^{[0]})$, which is a generator for the cofibrations in any of the model structures (by \cref{cor:cof_gen_sStratD,prop:ex_red_struct}).
    For the second statement, we make use of \cref{prop:cartesian_closed,lem:suff_to_pres_prod,lem:product_pres_equ} and it remains to show that for every pair of cofibrations $i \colon \str[A] \hookrightarrow \str[B]$, $j \colon \str[A]' \hookrightarrow \str[B]'$ the induced stratified simplicial map \[
    i \boxtimes j \colon \str[A] \times \str[B]' \cup_{\str[A] \times \str[A]'} \str[B] \times \str[A]' \to \str[B] \times \str[B']\] is a cofibration.
    Both in $\sStratCN$ and in $\sStratDN$ a map is a cofibration, if and only if the underlying simplicial map is a cofibration (i.e. a monomorphism) hence in these cases the $i\boxtimes j$ is a cofibration, since the underlying simplicial map $\forget (i\boxtimes j) \cong \forget(i) \boxtimes \forget(j)$ is a cofibration.
    Even more, by \cite[Cor. 4.2.5]{hovey2007model} we only need to consider the cases where $i$ and $j$ are generating cofibrations. For the cases $\sStratCRN$ and $\sStratDRN$, it follows that we only need to show that 
    \[
    \partial \Delta^{[n]} \times \Delta^{[m]} \cup_{\partial \Delta^{[n]} \times \partial \Delta^{[m]}}  \Delta^{[n]} \times \partial \Delta^{[m]} \to \Delta^{[n]} \times \Delta^{[m]}
    \]
    is a cofibration. Since both the source and target of this cofibration are refined, it follows from the cases $\sStratCN$ and $\sStratDN$, together with the characterization of cofibrations between cofibrant objects in a right-bousfield localization (\cite[3.3.16]{hirschhornModel}) that $i \boxtimes j$ is a cofibration. 
\end{proof}
One of the main results of \cite{HughesPathSpaces} was that for certain particularly convenient stratified spaces $\tstr$ - so-called homotopically stratified spaces - for any closed union of strata $\tstr[A] \hookrightarrow \tstr$ the starting point evaluation map from the space of stratified paths $\sReal{\Delta^{[1]}} \to \tstr$ starting in $\tstr[A]$, $\textnormal{Path}_{nsp}(\tstr[A], \tstr[X])$, is a stratified fibration (i.e., has the right lifting property with respect to inclusions into the stratified cylinder).
Homotopically stratified spaces have the property that they are mapped into fibrant objects in $\sStratCN$, and being a fibration in $\sStratCN$ is even a stronger property than just lifting (simplicial) stratified homotopies. Thus, we may interpret the following result as a combinatorial analogue of \cite[Main Result]{HughesPathSpaces}.
\begin{construction}\label{con:global_exit_paths}
    Let $\sStratN$ be equipped with one of the model structures of \cref{subsec:refining_strat_sset}.
    Let $\str \in \sStratN$ and let $A \subset \ustr$ be a simplicial subset. Denote by $\str[A]$ the stratified simplicial set $\ustr[A] \to \ustr[X] \to \pstr$. Now, consider the pullback diagram in $\StratN$
        \begin{diagram}
            \textnormal{Path}_{nsp}(\str, \str[A]) :=\str^{\Delta^{[1]}}\times_{\ev_0} \str[A] \arrow[d]    \arrow[r] & \str^{\Delta^{[1]}} \arrow[d, "\ev_0"]\\
            \str[A] \arrow[r] & \str .
        \end{diagram}
     $\textnormal{Path}_{nsp}(\str, \str[A])$ is a stratified space over $\{ (p,q) \in \pstr \times \pstr \mid p \leq q\}$. Its vertices are precisely the $1$-simplices (i.e. paths) in $\str$, starting in $\str[A]$.
     Now, if $\str$ is fibrant, then it follows from the cartesian closedness of $\StratN$ that the right-hand vertical is a fibration. Consequently, so is the starting point evaluation map $\textnormal{Path}_{nsp}(\str, \str[A])  \to \str[A]$. In particular, this map has the right lifting property with respect to the acyclic cofibrations $\str[B] \hookrightarrow \str[B] \otimes \Delta^{1}$.
\end{construction}

\section*{Acknowledgments}
The author is supported by a PhD-stipend of the Landesgraduiertenförderung Baden-Württemberg. 
\printbibliography

\end{document}